\newcommand{\R}{\mathbb{R}}
\newcommand{\N}{\mathbb{N}}
\newcommand{\E}{\mathbb{E}}
\newcommand{\smallsum}{\textstyle\sum}
\renewcommand{\P}{\mathbb{P}}
\newcommand\numberthis{\addtocounter{equation}{1}\tag{\theequation}}
\newcommand{\vertiii}[1]{{\left\vert\kern-0.25ex\left\vert\kern-0.25ex\left\vert {#1} 
		\right\vert\kern-0.25ex\right\vert\kern-0.25ex\right\vert}}
\newcommand{\vertiiibig}[1]{{\big\vert\kern-0.25ex\big\vert\kern-0.25ex\big\vert {#1} 
		\big\vert\kern-0.25ex\big\vert\kern-0.25ex\big\vert}}
\newcommand{\vertiiiStandard}[1]{{\vert\kern-0.25ex\vert\kern-0.25ex\vert {#1} 
		\vert\kern-0.25ex\vert\kern-0.25ex\vert}}
\newcommand{\ANNs}{\mathbf{N}}
\newcommand{\activation}{a}
\newcommand{\activationDim}[1]{\mathfrak{M}_{\activation,#1}}
\newcommand{\functionANN}{\mathcal{R}_{\activation}}
\newcommand{\paramANN}{\mathcal{P}}
\newcommand{\lengthANN}{\mathcal{L}}
\newcommand{\inDimANN}{\mathcal{I}}
\newcommand{\compANN}[2]{{#1 \bullet  #2}}
\newcommand{\paraANN}[1]{\mathbf{P}_{#1}}
\newcommand{\outDimANN}{\mathcal{O}}
\newcommand{\dims}{\mathcal{D}}
\newcommand{\hiddenLength}{\mathcal{H}}
\newcommand{\hiddenDimId}{\mathfrak{i}}
\newcommand{\parallelizationSpecial}{\mathbf{P}}
\newcommand{\idMatrix}{\operatorname{I}}
\newcommand{\qandq}{\qquad\text{and}\qquad}
\newcommand{\pa}[1]{\left({#1}\right)}
\newcommand{\SubsetANNs}{\mathfrak{N}}
\newcommand{\ExponDim}{\mathfrak{d}}
\newcommand{\ExponError}{\mathfrak{e}}
\newcommand{\ExponN}{\mathfrak{n}}
\newcommand{\Constant}{\mathfrak{C}}
\newcommand{\matrixANN}{\mathfrak{W}}
\newcommand{\vectorANN}{\mathfrak{B}}
\newcommand{\idRelu}{\mathfrak{I}}
\newcommand{\sumANN}{\mathfrak{S}}
\newcommand{\extensionANN}{\mathfrak{T}}
\newcommand{\dimANNlevel}{\mathbb{D}}
\numberwithin{equation}{section}
\newtheorem{lemma}{Lemma}[section]
\newtheorem{cor}[lemma]{Corollary}
\newtheorem{definition}[lemma]{Definition}
\newtheorem{theorem}[lemma]{Theorem}
\newtheorem{prop}[lemma]{Proposition}
\begin{document}
	
\title{Deep neural network approximations\\
	 for Monte Carlo algorithms}

\author{ Philipp Grohs$^1$, Arnulf Jentzen$^2$, and   Diyora Salimova$^3$
	\bigskip
	\\
	\small{$^1$Faculty of Mathematics and
		Research Platform Data Science,}\\
	\small{University of Vienna, Austria, e-mail:   philipp.grohs@univie.ac.at}
	\smallskip
	\\
	\small{$^2$Seminar for Applied Mathematics, Department of Mathematics,}\\
	\small{ETH Zurich, Switzerland, e-mail:   arnulf.jentzen@sam.math.ethz.ch}
	\smallskip
	\\
	\small{$^3$Seminar for Applied Mathematics, Department of Mathematics,}\\
	\small{ETH Zurich, Switzerland, e-mail:  diyora.salimova@sam.math.ethz.ch}}

\maketitle

\begin{abstract}
In the past few years deep artificial neural networks (DNNs) have been  successfully  employed in   a large number of computational problems including, e.g.,
language processing, 
image recognition, 
fraud detection, and computational advertisement.
Recently, it has also been proposed in the scientific literature 
to reformulate 
partial differential equations (PDEs) as stochastic learning problems and to employ DNNs together with 
stochastic gradient descent methods 
to approximate the solutions of such PDEs. There are also a few mathematical convergence results in the scientific literature  which show that  DNNs can approximate solutions of certain PDEs without the curse of dimensionality  in the sense that the number of real parameters employed to describe the DNN grows at most polynomially both in the PDE dimension $d \in \N$  and the reciprocal of the prescribed  approximation accuracy $\varepsilon > 0$. 
One key argument in most of these results is,  first, to employ a Monte Carlo approximation scheme which can approximate the solution of the PDE under consideration at a fixed space-time point without the curse of dimensionality and, thereafter, to prove then that  DNNs are flexible enough to mimic the behaviour of the employed approximation scheme. Having this in mind, one could aim for a general abstract result which shows under suitable assumptions that if a certain function can be approximated by any kind of (Monte Carlo) approximation scheme  without   the curse of dimensionality, then the function can also be approximated with DNNs without   the curse of dimensionality. 
It is a  key contribution of this article to make a first step towards this direction.
In particular,
the main result of this paper, roughly speaking, shows that if a function can be approximated by means of some suitable discrete approximation scheme without  the curse of dimensionality  and if there exist  DNNs which satisfy certain regularity properties and which approximate this discrete approximation scheme without  the curse of dimensionality,  then the function itself can  also be approximated with  DNNs without the curse of dimensionality.
Moreover, 
for the number of real parameters used to describe such approximating  DNNs we provide an explicit upper bound for the optimal exponent of the dimension $d \in \N$ of the function under consideration  as well as an explicit lower bound for the optimal exponent of the prescribed approximation accuracy $\varepsilon >0$.
As an application of this result we derive that
solutions of suitable Kolmogorov PDEs
can  be approximated with DNNs without the curse of dimensionality. 
\end{abstract}

\tableofcontents

\section{Introduction}

In the past few years deep artificial neural networks (DNNs) have been  successfully  employed in   a large number of computational problems including, e.g.,
 language processing (cf., e.g., \cite{dahl2012context,graves2013speech,HuConvolutional2014,Kalchbrenner14aconvolutional,hinton2012deep,wu2016stimulated}), 
 image recognition  (cf., e.g., \cite{huang2017densely,krizhevsky2012imagenet,simonyan2014very,Taigman2014,wangfacerecognition2015}), 
fraud detection
(cf., e.g., \cite{CHOUIEKH2018,Roy2018}), and computational advertisement
(cf., e.g., \cite{Wang2017Ad,Zhai2016}).
Recently, it has also been proposed in 
\cite{weinan2017deep,Han2018PNAS} to reformulate 
partial differential equations (PDEs) as stochastic learning problems and to employ DNNs together with 
stochastic gradient descent methods 
to approximate the solutions of such PDEs (cf., e.g., also \cite{uchiyama1993solving,MeadeFernandez1994,Lagaris1998ArtificialNN,LiLuo2003}). We refer, e.g., to \cite{Raissi2018DeepHP,PhamWarin2019,Magill2018NeuralNT,LyeMishraRay2019,LongLuMaDong2018,JacquierOumgari2019,HurePhamWarin2019,HanLong2018,GoudenegeMolent2019,FujiiTakahashi2019,BeckBeckerCheridito2019,Berg2018AUD,ChanMikaelWarin2019, Kolmogorov,BeckJentzenE2019,BeckerCheridito2019,BeckerCheriditoJentzen2019, weinan2018deep,Farahmand2017DeepRL,   henry2017deep,Sirignano2018,Dockhorn2019} for further developments and extensions of such deep learning based numerical approximation methods for PDEs. 
In particular, the references \cite{Magill2018NeuralNT,Kolmogorov,Berg2018AUD,weinan2018deep,JacquierOumgari2019} deal with linear PDEs (and the stochastic differential equations (SDEs) related to them, respectively),
the references \cite{HurePhamWarin2019,BeckBeckerCheridito2019,ChanMikaelWarin2019,Farahmand2017DeepRL,FujiiTakahashi2019,henry2017deep,Dockhorn2019} deal with  semilinear PDEs (and the backward stochastic differential equations (BSDEs) related to them, respectively),
  the references \cite{LongLuMaDong2018,BeckJentzenE2019,PhamWarin2019,Raissi2018DeepHP} deal  with  fully nonlinear PDEs (and the second-order backward stochastic differential equations (2BSDEs) related to them,
  respectively),
the references \cite{LyeMishraRay2019,Sirignano2018,HanLong2018} deal  with  certain specific subclasses of fully nonlinear PDEs (and the 2BSDEs related to them, 
respectively),
 and
the references \cite{BeckerCheridito2019,BeckerCheriditoJentzen2019,Sirignano2018,GoudenegeMolent2019} deal with free boundary PDEs (and the optimal stopping/option pricing problems  related to them (see, e.g.,~\cite[Chapter~1]{BensoussanLions1982}), respectively).
In the scientific literature there are also a few rigorous mathematical convergence results for such deep learning based numerical approximation methods for PDEs. For example, the references \cite{HanLong2018,Sirignano2018} provide mathematical convergence results for such deep learning based numerical approximation methods for PDEs without any information on the convergence speed and, for instance, the references
 \cite{BernerGrohsJentzen2018, ElbraechterSchwab2018,GrohsWurstemberger2018,HutzenthalerJentzenKruse2019,JentzenSalimovaWelti2018,KutyniokPeterseb2019,ReisingerZhang2019,GrohsHornungJentzen2019} provide mathematical convergence results of such deep learning based numerical approximation methods for PDEs with dimension-independent convergence rates and error constants which are only polynomially dependent on the dimension. In particular, the latter references show that  DNNs can approximate solutions of certain PDEs without the curse of dimensionality (cf.~\cite{Bellman1957}) in the sense that the number of real parameters employed to describe the DNN grows at most polynomially both in the PDE dimension $d \in \N$  and the reciprocal of the prescribed  approximation accuracy $\varepsilon > 0$ (cf., e.g., \cite[Chapter~1]{Novak2008} and \cite[Chapter~9]{Novak2010}). One key argument in most of these articles is,  first, to employ a Monte Carlo approximation scheme which can approximate the solution of the PDE under consideration at a fixed space-time point without the curse of dimensionality and, thereafter, to prove then that  DNNs are flexible enough to mimic the behaviour of the employed approximation scheme (cf., e.g., \cite[Section~2 and (i)--(iii) in Section~1]{JentzenSalimovaWelti2018} and \cite{GrohsWurstemberger2018}). Having this in mind, one could aim for a general abstract result which shows under suitable assumptions that if a certain function can be approximated by any kind of (Monte Carlo) approximation scheme  without   the curse of dimensionality, then the function can also be approximated with DNNs without   the curse of dimensionality. 
 
It is a  key contribution of this article to make a first step towards this direction.
In particular,
the main result of this paper, Theorem~\ref{thm:main} below, roughly speaking, shows that if a function can be approximated by means of some suitable discrete approximation scheme without  the curse of dimensionality (cf.~\eqref{eq:ass:u} in Theorem~\ref{thm:main} below) and if there exist  DNNs which satisfy certain regularity properties and which approximate this discrete approximation scheme without  the curse of dimensionality,  then the function itself can  also be approximated with  DNNs without the curse of dimensionality.
Moreover, 
for the number of real parameters used to describe such approximating  DNNs we provide in Theorem~\ref{thm:main} below an explicit upper bound for the optimal exponent of the dimension $d \in \N$ of the function under consideration  as well as an explicit lower bound for the optimal exponent of the prescribed approximation accuracy $\varepsilon >0$ (see \eqref{eq:prop:statement} in Theorem~\ref{thm:main} below).

In our applications of 
 Theorem~\ref{thm:main}   we employ  Theorem~\ref{thm:main} to study in 
 Theorem~\ref{thm:dnn:kolmogorov} below DNN approximations for PDEs.
Theorem~\ref{thm:dnn:kolmogorov} can be considered as a special case of Theorem~\ref{thm:main} with the function to be approximated to be equal to the solution of a suitable Kolmogorov PDE (cf.~\eqref{eq:PDE} below) at the final time $T \in (0, \infty)$ and the approximating scheme to be equal to the Monte Carlo Euler scheme.
In particular, Theorem~\ref{thm:dnn:kolmogorov} shows that 
 solutions of suitable Kolmogorov PDEs
can  be approximated with DNNs without the curse of dimensionality. 
For the number of real parameters used to describe such approximating  DNNs Theorem~\ref{thm:dnn:kolmogorov} also provides an explicit upper bound for the optimal exponent of the dimension $d \in \N$ of the PDE under consideration  as well as  an explicit lower bound for the optimal exponent of the prescribed approximation accuracy $\varepsilon >0$ (see \eqref{eq:kolmogorov:statement} below). 
In order to illustrate the findings of Theorem~\ref{thm:dnn:kolmogorov} below, we
now present in Theorem~\ref{thm:intro} below a special case of  Theorem~\ref{thm:dnn:kolmogorov}.

\begin{theorem}
	\label{thm:intro}
	Let
	$ \varphi_{0,d} \colon \R^d \to \R $, $ d \in \N $,
	and
	$ \varphi_{ 1, d } \colon \R^d \to \R^d $,
	$ d \in \N $,
	be functions,
	let  $\left\| \cdot \right\| \colon (\cup_{d \in \N} \R^d) \to [0, \infty)$ satisfy for all $d \in \N$, $x = (x_1, x_2, \ldots, x_d) \in \R^d$ that $\|x\| = ( \smallsum_{i=1}^d |x_i|^2)^{\nicefrac{1}{2}}$,
		let $A_d \in C(\R^d, \R^d)$, $d \in \N$, satisfy   for all $d \in \N$, $x = (x_1, x_2, \ldots, x_d) \in \R^d$ that
	$A_d(x) = (\max\{x_1, 0\}, \max\{x_2, 0\}, \ldots, \max\{x_d, 0\})$, let 	$\ANNs
	=
	\cup_{L \in \N}
	\cup_{ (l_0,l_1,\ldots, l_L) \in \N^{L+1} }
	(
	\times_{k = 1}^L (\R^{l_k \times l_{k-1}} \times \R^{l_k})
	)$, let $P \colon \ANNs \to \N$  and $R \colon \ANNs \to \cup_{k,l\in\N}\,C(\R^k,\R^l)$ 
	 satisfy
	for all $ L\in\N$, $l_0,l_1,\ldots, l_L \in \N$, 
	$
	\Phi  
	=
	((W_1, B_1),(W_2, B_2),\allowbreak \ldots, (W_L,\allowbreak B_L))
	\in  \allowbreak
	( \times_{k = 1}^L\allowbreak(\R^{l_k \times l_{k-1}} \times \R^{l_k}))
	$,
	$x_0 \in \R^{l_0}, x_1 \in \R^{l_1}, \ldots, x_{L-1} \in \R^{l_{L-1}}$ 
	with $\forall \, k \in \N \cap (0,L) \colon x_k =A_{l_k}(W_k x_{k-1} + B_k)$  
	that
	$P (\Phi)
	=
	\sum_{k = 1}^L l_k(l_{k-1} + 1) 
	$,
$R(\Phi) \in C(\R^{l_0},\R^{l_L})$, and
$(R(\Phi)) (x_0) = W_L x_{L-1} + B_L$,
	let
	$ T, \kappa \in (0, \infty)$, 
	$\ExponDim_1 \in [\nicefrac{1}{2}, \infty)$, $\ExponDim_3 \in [4, \infty)$,
	$\ExponError, \ExponDim_2, \ExponDim_4, \ExponDim_5, \ExponDim_6  \in [0, \infty)$, $\theta \in [1, \infty)$, 
		$
	( \phi^{ m, d }_{ \varepsilon } )_{ 
		(m, d, \varepsilon) \in \{ 0, 1 \} \times \N \times (0,1] 
	} 
	\subseteq \ANNs
	$, 
	assume for all
	$ d \in \N $, 
	$ \varepsilon \in (0,1] $, 
	$ m \in \{0, 1\}$,
	$ 
	x, y \in \R^d
	$
	that
	$
	R( \phi^{ 0, d }_{ \varepsilon } )
	\in 
	C( \R^d, \R )
	$,
	$
	R( \phi^{ 1, d }_{ \varepsilon } )
	\in
	C( \R^d, \R^d )
	$, 
		$ 
	P( \phi^{ m, d }_{ \varepsilon } ) 
	\leq \kappa d^{ 2^{(-m)} \ExponDim_3 } \varepsilon^{ - 2^{(-m)}  \ExponError }$,
	$ |( R (\phi^{ 0, d }_{ \varepsilon }) )(x) - ( R (\phi^{ 0, d }_{ \varepsilon }) )(y)| \leq \kappa d^{\ExponDim_6} (1   + \|x\|^{\theta} + \|y \|^{\theta})\|x-y\|$, 
	$
	\|
	( R (\phi^{ 1, d }_{ \varepsilon }) )(x)    
	\|	
	\leq 
	\kappa ( d^{ \ExponDim_1 + \ExponDim_2 } + \| x \| )
	$, $|
	\varphi_{ 0, d }( x )| \leq \kappa d^{ \ExponDim_6 }
	( d^{ \theta(\ExponDim_1 + \ExponDim_2) } + \| x \|^{ \theta } )$,
	$
	\| 
	\varphi_{ 1, d }( x ) 
	- 
	\varphi_{ 1, d }( y )
	\|
	\leq 
	\kappa 
	\| x - y \| 
	$, 	
	and
	\begin{equation}
	\label{eq:intro:hypo}
	\| 
	\varphi_{ m, d }(x) 
	- 
	( R (\phi^{ m, d }_{ \varepsilon }) )(x)
	\|
	\leq 
	\varepsilon  \kappa d^{\ExponDim_{(5 -m)}} (d^{\theta(\ExponDim_1 + \ExponDim_2)}+ \|x\|^{\theta}) 
	,
	\end{equation}
	and for every $ d \in \N $ let
	$ u_d \colon [0,T] \times \R^{d} \to \R $
	be an 
	at most polynomially growing viscosity solution of
	\begin{equation}
	\label{eq:PDE_intro}
	\begin{split}
	( \tfrac{ \partial }{\partial t} u_d )( t, x ) 
	& = 
	( \tfrac{ \partial }{\partial x} u_d )( t, x )
	\,
	\varphi_{ 1, d }( x )
	+
	\textstyle
	\sum\limits_{ i = 1 }^d
	\displaystyle
	( \tfrac{ \partial^2 }{ \partial x_i^2  } u_d )( t, x )
	\end{split}
	\end{equation}
	with $ u_d( 0, x ) = \varphi_{ 0, d }( x ) $
	for $ ( t, x ) \in (0,T) \times \R^d $.
	Then for every $p \in (0, \infty)$
	there exist
	$
	c \in \R
	$ and
	$
	( 
	\Psi_{ d, \varepsilon } 
	)_{ (d , \varepsilon)  \in \N \times (0,1] } \subseteq \ANNs
	$
	such that
	for all 
	$
	d \in \N 
	$,
	$
	\varepsilon \in (0,1] 
	$
	it holds that
	$
R( \Psi_{ d, \varepsilon } )
	\in C( \R^{ d }, \R )
	$, $[
	\int_{ [0, 1]^d }
	|
	u_d(T, x) - ( R (\Psi_{ d, \varepsilon }) )( x )
	|^p
	\,
	dx
	]^{ \nicefrac{ 1 }{ p } }
	\leq
	\varepsilon$, 
	and
	\begin{align}
	\label{eq:intro:par}
	P( \Psi_{ d, \varepsilon } ) \leq c \varepsilon^{-(\ExponError +6)}  d^{6[\ExponDim_6 + ( \ExponDim_1 + \ExponDim_2)(\theta+1)] +  \ExponDim_3  +  \ExponError \max\{\ExponDim_5 + \theta ( \ExponDim_1 + \ExponDim_2), \ExponDim_4 + \ExponDim_6 + 2\theta ( \ExponDim_1 + \ExponDim_2)\}}.
	\end{align}
\end{theorem}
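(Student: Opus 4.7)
The plan is to obtain Theorem~\ref{thm:intro} as a direct consequence of Theorem~\ref{thm:dnn:kolmogorov}, which in turn is an application of the abstract result Theorem~\ref{thm:main}. The PDE \eqref{eq:PDE_intro} is a Kolmogorov equation with drift $\varphi_{1,d}$ and constant identity-type diffusion, so by the Feynman--Kac representation one has $u_d(T,x) = \mathbb{E}[\varphi_{0,d}(X_T^{x,d})]$, where $X^{x,d}$ solves the SDE $dX_t = \varphi_{1,d}(X_t)\,dt + \sqrt{2}\,dW_t$ with $X_0 = x$. The Lipschitz assumption on $\varphi_{1,d}$ (with a dimension-free constant $\kappa$) ensures existence and uniqueness of strong solutions and standard moment bounds that depend polynomially on $d$ through $\ExponDim_1+\ExponDim_2$.

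The approximation scheme to be fed into Theorem~\ref{thm:main} is the Monte Carlo Euler--Maruyama scheme: fix a time-step count $N\in\mathbb{N}$ with mesh $h=T/N$ and a Monte Carlo sample size $M\in\mathbb{N}$, and define
\begin{equation*}
\widehat{u}_{d,N,M}(x) = \tfrac{1}{M}\smallsum_{m=1}^{M} \varphi_{0,d}\bigl(Y^{x,d,m}_N\bigr),
\qquad Y^{x,d,m}_{n+1} = Y^{x,d,m}_{n} + h\,\varphi_{1,d}(Y^{x,d,m}_{n}) + \sqrt{2h}\,\xi^{d,m}_{n+1},
\end{equation*}
where $(\xi^{d,m}_n)$ are i.i.d.\ standard Gaussian increments and $Y^{x,d,m}_0 = x$. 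Combining the strong order-$\tfrac{1}{2}$ Euler error with the Monte Carlo $L^p$ error via Marcinkiewicz--Zygmund, and using the polynomial-growth, Lipschitz-in-initial-condition, and moment bounds on $\varphi_{0,d}$, $\varphi_{1,d}$ implied by the hypotheses, yields an $L^p([0,1]^d)$-error bound of the form $C\,d^{\alpha}\bigl(N^{-1/2}+M^{-1/2}\bigr)$ with $\alpha$ an explicit polynomial expression in $\ExponDim_1,\ExponDim_2,\ExponDim_4,\ExponDim_5,\ExponDim_6,\theta$. Choosing $N,M\asymp\varepsilon^{-2}d^{2\alpha}$ produces a discrete scheme achieving accuracy $\varepsilon$ with the number $NM$ of realizations growing polynomially in $d$ and $\varepsilon^{-1}$; this verifies the hypothesis \eqref{eq:ass:u} of Theorem~\ref{thm:main}.

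Next I would build the DNN realization of $\widehat{u}_{d,N,M}$. One Euler step is an affine update composed with $\varphi_{1,d}$, so by composition of $\phi^{1,d}_{\varepsilon'}$ with affine maps (using the ANN composition operator $\compANN{\cdot}{\cdot}$ from the framework) and an identity ReLU skip connection, one step can be realized with parameter count of order $P(\phi^{1,d}_{\varepsilon'})$ and bounded Lipschitz growth. Concatenating $N$ such steps (with appropriate realization of the identity channel and of the scaled noise increment, which is a fixed input) gives a DNN whose parameter count is of order $N\cdot d^{\ExponDim_3/2}(\varepsilon')^{-\ExponError/2}$. The Monte Carlo average is then implemented by parallelizing $M$ independent copies via $\paraANN{M}$ and composing with one final application of $\phi^{0,d}_{\varepsilon'}$, with $\varepsilon'$ chosen as an appropriate polynomial in $\varepsilon$ and $d^{-1}$ so that the DNN-approximation errors accumulated along the recursion stay below $\varepsilon$.

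The main obstacle will be bookkeeping: (i) verifying that all regularity conditions required by Theorem~\ref{thm:main} — notably the polynomial-in-$d$ Lipschitz and growth constants of the iterated DNN-Euler map — survive the $N$-fold composition (which forces a careful propagation of Lipschitz constants via Gr\"onwall); and (ii) selecting the free parameters $N$, $M$, $\varepsilon'$ so that the resulting exponent of $\varepsilon^{-1}$ is $\ExponError+6$ and the exponent of $d$ matches the maximum appearing in \eqref{eq:intro:par}. The factor $\ExponError + 6$ reflects the combination of the $\varepsilon^{-\ExponError}$ cost of approximating $\varphi_{1,d}$, the $\varepsilon^{-2}$ cost each from Euler and Monte Carlo sample count, and an additional loss from converting $L^p$-error on the Gaussian integral to a uniform-in-$x$-on-$[0,1]^d$ bound; the dimensional exponent is obtained by taking $\alpha$ through the maximum in \eqref{eq:intro:par} and combining it with the $d^{\ExponDim_3}$ cost from $\phi^{1,d}_{\varepsilon'}$ and with the polynomial moment factors from the Euler scheme.
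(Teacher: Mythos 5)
Your high-level route is exactly the paper's: Theorem~\ref{thm:intro} is read off Corollary~\ref{cor:laplacian:lebesgue}, which specializes Theorem~\ref{thm:dnn:kolmogorov} (a Monte Carlo Euler instance of the abstract Theorem~\ref{thm:main}) to $A_d = \idMatrix_d$ and $\nu_d$ the uniform distribution on $[0,1]^d$; and the DNN construction you sketch (iterated Euler steps realized by composing $\phi^{1,d}_{\varepsilon'}$ with an identity skip connection via Lemma~\ref{lem:Composition_Sum}, then a parallelized Monte Carlo average built from $\phi^{0,d}_{\varepsilon'}$ via Lemma~\ref{lem:sum:ANN}) is precisely what the proof does.

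One part of your bookkeeping narrative is, however, wrong, and it is exactly the part you flag as the main obstacle. There is no step converting an $L^p$-error into a uniform-in-$x$ bound on $[0,1]^d$; the whole argument, from \eqref{eq:ass:u} through the final estimate, lives in $L^p(\nu_d)$, and the passage from the $L^p(\P\otimes\nu_d)$ Monte Carlo bound to a fixed realization $\omega$ is a soft Markov-type existence argument costing no power of $\varepsilon^{-1}$. The $\varepsilon^{-2}$ you are missing comes from the fact that parallelizing the $M$ Monte Carlo sample paths is \emph{quadratic}, not linear, in $M$: by Lemma~\ref{lem:sum:ANN} the averaging network has parameter count of order $M^2$ times that of a single path network. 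Theorem~\ref{thm:dnn:kolmogorov} takes $M_N = N$ with $N\asymp \varepsilon^{-2} d^{2\ExponDim_0}$ (where $\ExponDim_0 = \ExponDim_6 + (\ExponDim_1+\ExponDim_2)(\theta+1)$), so the final network size is of order $N^3 d^{\max\{4,\ExponDim_3\}}\varepsilon^{-\ExponError}$: one factor $N$ from the $N$ Euler iterations (depth), two factors from the width blow-up in the Monte Carlo parallelization. Theorem~\ref{thm:main} encodes this via $\ExponN_0 = \nicefrac{1}{2}$, $\ExponN_1 = 0$, $\ExponN_2 = 2$, giving the exponent $(\ExponN_1 + \ExponN_2 + 1)/\ExponN_0 = 6$. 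Your accounting of ``$\varepsilon^{-2}$ each from Euler and Monte Carlo'' only reaches $\varepsilon^{-4}$; the extra factor is the $M^2$ parallelization penalty, not an $L^p$-to-uniform loss.
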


Theorem~\ref{thm:intro} is an immediate consequence of Corollary~\ref{cor:laplacian:lebesgue}  in Section~\ref{sec:Kolmogorov} below. Corollary~\ref{cor:laplacian:lebesgue}, in turn, is a special case of Theorem~\ref{thm:dnn:kolmogorov}.
Let us add some comments regarding the mathematical objects appearing 
in Theorem~\ref{thm:intro}. 
The set $ \ANNs $ in  Theorem~\ref{thm:intro}  above 
is a set of tuples of pairs of real matrices and real vectors and this set
represents the set of all DNNs (see also Definition~\ref{Def:ANN} below). 
The functions $A_d \in C(\R^d, \R^d)$, $d \in \N$, in Theorem~\ref{thm:intro}  represent multidimensional rectifier functions. 
Theorem~\ref{thm:intro} 
is thus an approximation result for rectified DNNs. 
Moreover, 
for every DNN $ \Phi \in \ANNs $
in Theorem~\ref{thm:intro} above 
$ P( \Phi ) \in \N $ represents the number of real parameters 
which are used to describe the DNN $ \Phi $ (see also Definition~\ref{Def:ANN} below). 
In particular, for every DNN $ \Phi \in \ANNs $
in Theorem~\ref{thm:intro} one can think of 
$ P( \Phi ) \in \N $ 
as a number proportional to the amount of memory storage 
 needed to store the DNN $\Phi$.
Furthermore, the function 
$ R \colon \ANNs \to \cup_{ k, l \in \N } C( \R^k, \R^l ) $  
from the set $ \ANNs $ of ``all DNNs" to the union 
$ \cup_{ k, l \in \N } C( \R^k, \R^l ) $ 
of continuous functions  describes the realization functions
associated to the DNNs (see also Definition~\ref{Definition:ANNrealization} below). 
The real number $ T > 0 $ 
in Theorem~\ref{thm:intro} describes the time horizon under consideration and
the real numbers $ \kappa, \ExponError, \theta, \ExponDim_1, \ExponDim_2, \ldots, \ExponDim_6 \in \R$ in Theorem~\ref{thm:intro} 
are constants used to formulate the assumptions in Theorem~\ref{thm:intro}. 
The key assumption in Theorem~\ref{thm:intro} is the hypothesis 
that both the possibly nonlinear initial value functions $ \varphi_{ 0, d } \colon \R^d \to \R $,
$ d \in \N $, 
and the possibly nonlinear drift coefficient functions 
$ \varphi_{ 1, d } \colon \R^d \to \R^d $, $ d \in \N $,
of the PDEs in \eqref{eq:PDE_intro} can be approximated  by means of DNNs 
without the curse of dimensionality 
(see \eqref{eq:intro:hypo} above for details). Results related to Theorem~\ref{thm:dnn:kolmogorov} have been established in 
\cite[Theorem~3.14]{GrohsWurstemberger2018}, \cite[Theorem~1.1]{JentzenSalimovaWelti2018},    \cite[Theorem~4.1]{HutzenthalerJentzenKruse2019}, and
\cite[Corollary~2.2]{ReisingerZhang2019}. 
Theorem~3.14 in \cite{GrohsWurstemberger2018} 
proves a similar statement to \eqref{eq:intro:par}
for a different class of PDEs than \eqref{eq:PDE_intro}, that is, Theorem~3.14 in \cite{GrohsWurstemberger2018}  deals with Black-Scholes PDEs with affine linear coefficient functions while in \eqref{eq:PDE_intro} the diffusion coefficient is constant and the drift coefficient may be nonlinear. 
Theorem~1.1 in
\cite{JentzenSalimovaWelti2018}   shows the existence of  constants  and  exponents of   $d \in \N$ 
and  $\varepsilon >0$ 
such that \eqref{eq:intro:par} holds but does not provide any explicit form for these exponents.
Theorem~4.1 in \cite{HutzenthalerJentzenKruse2019}  studies a different class of PDEs than \eqref{eq:PDE_intro} (the diffusion coefficient is chosen so that the second order term is the Laplacian and the drift coefficient is chosen to be zero but there is a nonlinearity depending on the PDE solution in the PDE in Theorem~4.1 in \cite{HutzenthalerJentzenKruse2019}) and provides 
an explicit  exponent for $\varepsilon >0$
and
the existence of  constants  and exponents of $d \in \N$  such that \eqref{eq:intro:par} holds. 
Corollary~2.2 in \cite{ReisingerZhang2019} studies a more general class of Kolmogorov PDEs than \eqref{eq:PDE_intro}  and  shows the existence of constants  and exponents of  $d \in \N$ 
and  $\varepsilon >0$ 
such that \eqref{eq:intro:par} holds.
Theorem~\ref{thm:dnn:kolmogorov} above extends these results by providing explicit exponents for 
$d \in \N$ and $\varepsilon > 0$ in terms of the used assumptions such that \eqref{eq:intro:par} holds and, in addition, Theorem~\ref{thm:dnn:kolmogorov} can  be considered as a special case of the general  DNN approximation result in Theorem~\ref{thm:main} with the functions to be approximated to be equal to the solutions of  the PDEs in \eqref{eq:PDE_intro} at the final time $T \in (0, \infty)$ and the approximating scheme to be equal to the Monte Carlo Euler scheme.

The remainder of this article is organized as follows.  In Section~\ref{sec:main} we present Theorem~\ref{thm:main}, which is the main result of this paper.   The proof of Theorem~\ref{thm:main}  employs  the elementary result in Lemma~\ref{lem:apriori}. Lemma~\ref{lem:apriori} establishes suitable a priori bounds for random variables and follows from the well-known  discrete Gronwall-type inequality in
Lemma~\ref{lem:discrete:Gronwall}
below. In Section~\ref{sec:calculus} we develop in Lemma~\ref{lem:sum:ANN} and Lemma~\ref{lem:Composition_Sum}  a few elementary results on representation flexibilities of DNNs.
The proofs  of Lemma~\ref{lem:sum:ANN} and Lemma~\ref{lem:Composition_Sum} use  results on a certain artificial neural network (ANN) calculus which we recall and extend in Subsections~\ref{subsec:ANNs}--\ref{subsec:sums}.
In Section~\ref{sec:Kolmogorov} in Theorem~\ref{thm:dnn:kolmogorov} 
we employ Lemma~\ref{lem:sum:ANN} and Lemma~\ref{lem:Composition_Sum} to establish
 the existence of  DNNs which approximate  solutions of suitable Kolmogorov PDEs without the curse of dimensionality. 
In our proof of Theorem~\ref{thm:dnn:kolmogorov}  we also employ error estimates for the Monte Carlo Euler method
which we present  in Proposition~\ref{prop:monte_carlo_euler} in Section~\ref{sec:Kolmogorov}. The proof of Proposition~\ref{prop:monte_carlo_euler}, in turn, makes use of the elementary error estimate results  in Lemmas~\ref{lem:con_monte_carlo_euler}--\ref{lem:error:monte_carlo} below.

\section[Deep artificial neural network (DNN) approximations]{Deep artificial neural network (DNN) approximations}
\label{sec:main}

In this section we show in Theorem~\ref{thm:main} below that, 
roughly speaking, if a function can be approximated by means of some suitable discrete approximation scheme without  the curse of dimensionality  and if there exist  DNNs which satisfy certain regularity properties and which approximate this discrete approximation scheme without  the curse of dimensionality,  then the function itself can  also be approximated with  DNNs without the curse of dimensionality.
In our proof of Theorem~\ref{thm:main}  we employ the elementary a priori estimates for expectations of certain random variables in Lemma~\ref{lem:apriori} below. Lemma~\ref{lem:apriori}, in turn, follows from the well-known discrete Gronwall-type inequality in
Lemma~\ref{lem:discrete:Gronwall}
 below.

\subsection{A priori bounds for random variables}

\begin{lemma}
	\label{lem:discrete:Gronwall}
	Let $\alpha \in [0, \infty)$, $ \beta \in [0, \infty]$ and let $ x \colon \N_0 \to \R$  satisfy for all $n \in \N$ that $x_n \leq \alpha x_{n-1} + \beta$.
	Then it holds for all $n \in \N$  that
	\begin{align}
	\label{eq:gronwall:result}
	x_n \leq \alpha^n x_0 + \beta (1 + \alpha + \ldots + \alpha^{n-1}) \leq \alpha^n x_0 + \beta e^{\alpha}.
	\end{align}
\end{lemma}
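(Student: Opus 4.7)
The plan is to establish the first inequality in \eqref{eq:gronwall:result} by induction on $n \in \N_0$, and then to read off the second inequality via a routine bound on the finite geometric sum. For the base case $n = 0$, both sides of the first inequality reduce to $x_0$: the leading term $\alpha^0 x_0$ equals $x_0$, while the geometric sum $1 + \alpha + \ldots + \alpha^{n-1}$ is empty and hence vanishes, so the bound holds as an equality.

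For the inductive step I would assume the first inequality has been verified at step $n-1$, namely $x_{n-1} \leq \alpha^{n-1} x_0 + \beta \smallsum_{k=0}^{n-2} \alpha^k$, and combine this with the recursive hypothesis $x_n \leq \alpha x_{n-1} + \beta$; re-indexing the resulting sum then gives
\begin{equation*}
x_n \leq \alpha \bigl( \alpha^{n-1} x_0 + \beta \smallsum_{k=0}^{n-2} \alpha^k \bigr) + \beta = \alpha^n x_0 + \beta \bigl( 1 + \smallsum_{k=1}^{n-1} \alpha^k \bigr) = \alpha^n x_0 + \beta \smallsum_{k=0}^{n-1} \alpha^k,
\end{equation*}
which is precisely the first inequality at step $n$, closing the induction.

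For the second inequality I would invoke the elementary bound $\smallsum_{k=0}^{n-1} \alpha^k \leq e^\alpha$ on the finite geometric sum, then multiply through by $\beta$ and add $\alpha^n x_0$ to both sides. The extended-real conventions on $\beta \in [0, \infty]$ (using $0 \cdot \infty := 0$) dispose of the degenerate case $\beta = \infty$ without issue. No step here presents a genuine obstacle: the whole argument is a one-line induction supplemented by a standard series estimate, and the only care required lies in the clean treatment of the empty-sum base case and the extended-real conventions on $\beta$.
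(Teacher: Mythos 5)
Your proof takes essentially the same route as the paper's: establish the first inequality in \eqref{eq:gronwall:result} by a one-step induction (you include the vacuous case $n=0$, the paper starts at $n=1$; both close the inductive step in the same way), then pass to the second inequality by asserting that the geometric sum is bounded by $e^\alpha$. That final step is where I must flag a problem. The bound $1 + \alpha + \cdots + \alpha^{n-1} \leq e^\alpha$, which you describe as an elementary estimate, is in fact false for general $\alpha \in [0,\infty)$ and $n \in \N$: with $\alpha = 2$ and $n = 4$ the left-hand side equals $15$ whereas $e^2 < 7.4$, and for any fixed $\alpha > 0$ one has $1 + \alpha + \cdots + \alpha^{n-1} \to \tfrac{1}{1-\alpha}$ if $\alpha<1$ and $\to \infty$ if $\alpha \geq 1$ as $n \to \infty$, and in either case the limit exceeds $e^\alpha$ (for $\alpha\in(0,1)$ because $-\log(1-\alpha) = \alpha + \tfrac{\alpha^2}{2}+\cdots > \alpha$). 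So the inequality fails for every $\alpha>0$ once $n$ is large enough.

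To be fair, the paper's own proof passes through exactly the same unjustified inequality, writing $\alpha^n x_0 + \beta(1+\alpha+\cdots+\alpha^{n-1}) \leq \alpha^n x_0 + \beta e^\alpha$ without further comment, so your proposal faithfully mirrors the paper's argument, including its gap. But as a blind proof the appeal to an ``elementary'' estimate here conceals a genuine falsehood: the second inequality in \eqref{eq:gronwall:result} does not follow by the method used, and indeed does not hold as stated. What the induction actually yields is the sharp bound $x_n \leq \alpha^n x_0 + \beta \smallsum_{k=0}^{n-1}\alpha^k$; any further estimate of the geometric sum that is uniform in $n$ cannot be a constant depending only on $\alpha$.
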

\begin{proof}[Proof of Lemma~\ref{lem:discrete:Gronwall}]
	We prove \eqref{eq:gronwall:result} by induction on $n \in \N$. For the base case $n=1$ note that the hypothesis that $\forall \, k \in \N \colon x_k \leq \alpha x_{k-1} + \beta$ ensures that
	\begin{align}
	x_1 \leq \alpha x_0 +\beta = \alpha^1 x_0 + \beta \leq \alpha^1 x_0 + \beta e^\alpha.
	\end{align}
	This establishes \eqref{eq:gronwall:result} in the base case $n=1$. For the induction step $\N \ni (n-1)  \to n \in \N \cap [2, \infty)$ observe that the hypothesis that $\forall \, k \in \N \colon x_k \leq \alpha x_{k-1} + \beta$  implies that for all $n \in \N \cap [2, \infty)$ with $x_{n-1} \leq \alpha^{n-1} x_0 + \beta (1 + \alpha + \ldots + \alpha^{n-2})$ it holds that
	\begin{align}
	\begin{split}
	x_{n} &\leq \alpha x_{n-1} + \beta \leq \alpha^{n} x_0 + \alpha \beta (1 + \alpha + \ldots + \alpha^{n-2}) + \beta\\
	& = \alpha^{n} x_0 + \beta (1 + \alpha + \ldots + \alpha^{n-1})  \leq \alpha^{n} x_0 + \beta e^{\alpha}.
	\end{split}
	\end{align}
	Induction thus establishes \eqref{eq:gronwall:result}. This completes the proof of Lemma~\ref{lem:discrete:Gronwall}.
\end{proof}

\begin{lemma}
	\label{lem:apriori}
	Let $N \in \N$, $p \in [1, \infty)$, $\alpha, \beta, \gamma \in [0, \infty)$
	and let $X_n \colon \Omega \to \R$, $n \in \{0, 1, \ldots, N\}$, 	and  $Z_n \colon \Omega \to \R$, $n \in \{0, 1, \ldots, N-1\}$,  be random variables which satisfy for all $n \in \{1, 2, \ldots, N\}$ that
	\begin{align}
	\label{eq:apriori:ass}
	|X_n | \leq \alpha |X_{n-1}| + \beta  \big[\gamma + |Z_{n-1}| \big].
	\end{align}
	Then it holds that
	\begin{align}
	\begin{split}
	\left( \E\! \left[ |X_N |^p \right]\right)^{\nicefrac{1}{p}} \leq \alpha^N \! \left( \E\! \left[ |X_0 |^p \right]\right)^{\nicefrac{1}{p}} + e^{\alpha} \beta \! \left[ \gamma + \sup\nolimits_{i \in \{0, 1, \ldots, N -1 \}} \left( \E\! \left[ |Z_{i} |^p \right]\right)^{\nicefrac{1}{p}}\right].
	\end{split}
	\end{align}
\end{lemma}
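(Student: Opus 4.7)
\textbf{Proof plan for Lemma~\ref{lem:apriori}.} The plan is to reduce the statement to Lemma~\ref{lem:discrete:Gronwall} via the Minkowski/triangle inequality in $L^p(\P)$. First I would define the deterministic sequence $x_n := (\E[|X_n|^p])^{\nicefrac{1}{p}}$ for $n \in \{0,1,\ldots,N\}$ together with the constant $M := \sup_{i \in \{0,1,\ldots,N-1\}} (\E[|Z_i|^p])^{\nicefrac{1}{p}}$, and note that $p \in [1,\infty)$ makes $\|\cdot\|_{L^p(\P)}$ a genuine seminorm, so Minkowski's inequality applies.

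Next, taking $L^p(\P)$-norms on both sides of the pathwise bound~\eqref{eq:apriori:ass} and using Minkowski yields, for every $n \in \{1,2,\ldots,N\}$,
\begin{align}
x_n &\leq \alpha\, x_{n-1} + \beta \gamma + \beta\, (\E[|Z_{n-1}|^p])^{\nicefrac{1}{p}} \leq \alpha\, x_{n-1} + \beta\,[\gamma + M].
\end{align}
Here I use that the constants $\beta\gamma$ contribute $\beta\gamma$ to the $L^p$-norm (they are deterministic), and that $(\E[|Z_{n-1}|^p])^{\nicefrac{1}{p}} \leq M$ by definition of $M$.

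Finally, I would apply Lemma~\ref{lem:discrete:Gronwall} to the nonnegative real sequence $(x_n)_{n \in \N_0}$ with the choice of parameters $\alpha \rightsquigarrow \alpha$ and $\beta \rightsquigarrow \beta[\gamma + M]$. The conclusion~\eqref{eq:gronwall:result} of that lemma then reads
\begin{align}
x_N \leq \alpha^N x_0 + \beta\,[\gamma + M]\, e^{\alpha},
\end{align}
which, after unfolding the definitions of $x_N$, $x_0$ and $M$, is exactly the claimed bound.

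I do not expect any genuine obstacle in this argument: the proof is a routine passage from a pathwise recursive estimate to an $L^p$-estimate via Minkowski, followed by an off-the-shelf application of the discrete Gronwall-type inequality already proved in Lemma~\ref{lem:discrete:Gronwall}. The only small care needed is to recognise that the constant term $\beta\gamma$ in~\eqref{eq:apriori:ass} survives the $L^p$-norm unchanged, and that the supremum $M$ is a legitimate (finite or infinite) deterministic upper bound for each $(\E[|Z_{n-1}|^p])^{\nicefrac{1}{p}}$, so that the hypotheses of Lemma~\ref{lem:discrete:Gronwall} are satisfied with a single (possibly extended real) value of $\beta[\gamma+M]$.
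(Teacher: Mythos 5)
Your proof is correct and follows essentially the same route as the paper: apply Minkowski's inequality to pass from the pathwise recursion~\eqref{eq:apriori:ass} to the $L^p$-norm recursion $x_n \leq \alpha x_{n-1} + \beta[\gamma + M]$, then invoke Lemma~\ref{lem:discrete:Gronwall} with $\beta \rightsquigarrow \beta[\gamma+M]$. Your observation that the supremum $M$ (hence the effective $\beta$) might be $+\infty$ is handled precisely because Lemma~\ref{lem:discrete:Gronwall} allows $\beta \in [0,\infty]$.
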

\begin{proof}[Proof of Lemma~\ref{lem:apriori}]
	First, note that \eqref{eq:apriori:ass} implies for all $n \in \{1, 2, \ldots, N\}$  that
	\begin{align}
	\begin{split}
	\left( \E\! \left[ |X_n |^p \right]\right)^{\nicefrac{1}{p}}  &\leq \alpha \! \left( \E\! \left[ |X_{n-1} |^p \right]\right)^{\nicefrac{1}{p}} +\beta \! \left[ \gamma+  \left( \E\! \left[ |Z_{n-1} |^p \right]\right)^{\nicefrac{1}{p}} \right]\\
	& \leq  \alpha \! \left( \E\! \left[ |X_{n-1} |^p \right]\right)^{\nicefrac{1}{p}} +\beta \! \left[ \gamma+  \sup\nolimits_{i \in \{0, 1, \ldots, N -1 \}} \left( \E\! \left[ |Z_{i} |^p \right]\right)^{\nicefrac{1}{p}} \right].
	\end{split}
	\end{align}
	Lemma~\ref{lem:discrete:Gronwall} (with $\alpha = \alpha$, $\beta = \beta \, [ \gamma+  \sup\nolimits_{i \in \{0, 1, \ldots, N -1 \}} ( \E [ |Z_{i} |^p ] )^{\nicefrac{1}{p}} ]$ in the notation of Lemma~\ref{lem:discrete:Gronwall}) hence establishes for all $n \in \{1, 2, \ldots, N\}$ that
	\begin{align}
	\begin{split}
	\left( \E\! \left[ |X_n |^p \right]\right)^{\nicefrac{1}{p}} \leq \alpha^n \! \left( \E\! \left[ |X_0 |^p \right]\right)^{\nicefrac{1}{p}} + e^{\alpha} \beta \! \left[ \gamma + \sup\nolimits_{i \in \{0, 1, \ldots, N -1 \}} \left( \E\! \left[ |Z_{i} |^p \right]\right)^{\nicefrac{1}{p}}\right].
	\end{split}
	\end{align}
	The proof of Lemma~\ref{lem:apriori} is thus completed.
\end{proof}

\subsection{A DNN approximation result for Monte Carlo algorithms}

\begin{theorem}
	\label{thm:main}
		Let $(\Omega, \mathcal{F}, \P)$ be a probability space,
	let $  \ExponN_0 \in (0, \infty)$, $\ExponN_1, \ExponN_2, \ExponError, \allowbreak \ExponDim_0, \ExponDim_1, \allowbreak\ldots\allowbreak, \ExponDim_6 \in [0, \infty)$, $\Constant, p, \theta \in [1, \infty)$, 
 $(M_{N})_{N \in \N} \subseteq \N$,  
	let $Z^{N, d, m}_n \colon \Omega \to \R^{d} $, $n \in \{0, 1, \ldots, N-1\}$, $m \in \{1, 2, \ldots, M_{N}\}$, $d, N \in \N$, be random variables,
	let $f_{N, d}  \in C( \R^{d} \times \R^{d}, \R^{d})$,  $d, N \in \N$, and
	 $Y^{N, d, x}_n = (Y^{N, d, m, x}_n)_{m \in \{1, 2, \ldots, M_{N}\}} \colon \Omega \to \R^{M_N d}$, $n \in \{0, 1, \ldots, N\}$, $x \in \R^d$, $d, N \in \N$,   satisfy  for all $N,  d \in \N$,  $m \in \{1, 2, \ldots, M_{N}\}$,   $x \in \R^d$, $n \in \{1, 2, \ldots, N\}$, $\omega \in \Omega$ that 	$Y^{N, d, m, x}_{0}(\omega)  = x$ and
	\begin{align}
	\begin{split}
	Y^{N, d, m, x}_{n}(\omega) &= f_{N, d} \big(Z^{N, d, m}_{n-1}(\omega), Y^{N, d, m, x}_{n-1}(\omega)\big),
	\end{split}
	\end{align}
let $\left\| \cdot \right\| \colon (\cup_{d \in \N} \R^d) \to [0, \infty)$ satisfy for all $d \in \N$, $x = (x_1, x_2, \ldots, x_d) \in \R^d$ that $\|x\| = ( \smallsum_{i=1}^d |x_i|^2)^{\nicefrac{1}{2}}$,
for every $d \in \N$ let $ \nu_d \colon \mathcal{B}(\R^d) \to [0,1]$ be a probability measure on $\R^d$, 
	let $g_{N, d} \in C( \R^{Nd}, \R)$, $ d, N \in \N$, and $u_d \in C(\R^d, \R)$, $d \in \N$,   satisfy for all $ N, d \in \N$, $m \in \{1, 2, \ldots, M_N\}$, $n \in \{0, 1, \ldots, N-1\}$ that 
	\begin{gather}
	\label{eq:ass:u}
	\left(\E \! \left[ \int_{\R^d} \big|u_d(x) - g_{M_N,d} (Y^{N, d, x}_N) \big|^p \, \nu_d (dx) \right] \right)^{\!\nicefrac{1}{p}} \leq \Constant  d^{\ExponDim_0} N^{-\ExponN_0},\\
	\label{eq:ass:Z:apriori}
	\left( \E\! \left[ \|Z^{N, d, m}_{n} \|^{2 p \theta} \right]\right)^{\nicefrac{1}{(2 p \theta)}} \leq  \Constant d^{\ExponDim_1}, \quad \text{and} \quad
\left[	\int_{\R^d} \|x\|^{2p \theta} \, \nu_d (dx) \right]^{\nicefrac{1}{(2 p \theta)}}  \leq \Constant d^{\ExponDim_1 + \ExponDim_2}, 
	\end{gather}
		let $\ANNs$ be a  set,
	let 
	$
	\paramANN
	\colon \ANNs \to \N
	$,
	$\mathcal{D} \colon \ANNs \to \cup_{L=2}^\infty\, \N^{L}$,
	and
	$
	\mathcal{R} \colon 
	\ANNs
	\to 
	\cup_{ k, l \in \N } C( \R^k, \R^l )
	$
	be functions,   
	let $\SubsetANNs_{d, \varepsilon} \subseteq \ANNs$, 
	$ \varepsilon \in (0, 1]$, $d \in \N$, let
 $(\mathbf{f}^{N, d}_{\varepsilon, z})_{(N, d, \varepsilon, z) \in \N^2 \times (0, 1] \times \R^d } \subseteq \ANNs$,
	$(\mathbf{g}^{N, d}_{\varepsilon})_{(N, d, \varepsilon) \in \N^2 \times (0, 1] } \subseteq \ANNs$,
	 	$(\idRelu_{d})_{d \in \N} \subseteq \ANNs$,
	assume  for all  $N, d \in \N$, $\varepsilon \in (0, 1]$, $x, y, z \in \R^d$ that
	$ \SubsetANNs_{d, \varepsilon} \subseteq \{\Phi \in \ANNs \colon \mathcal{R}( \Phi) \in C(\R^d, \R^d) \}$,
	$\idRelu_d \in \SubsetANNs_{d, \varepsilon}$, 
		$(\mathcal{R} (\idRelu_d))(x) = x$, $\paramANN(\idRelu_d) \leq \Constant d^{\ExponDim_3} $, $  \mathcal{R}( \mathbf{f}^{N, d}_{\varepsilon, z}) \in C(\R^d, \R^d)$,
	 $(\R^d \ni \mathfrak{z} \mapsto  ( \mathcal{R} (\mathbf{f}^{N, d}_{\varepsilon, \mathfrak{z}}))(x) \in \R^d)$ is $\mathcal{B}(\R^d) \slash \mathcal{B}(\R^d)$-measurable, and
	\begin{gather}
	\label{eq:ass:dist:phi}
	\|f_{N, d}(z, x) - ( \mathcal{R} (\mathbf{f}^{N, d}_{\varepsilon, z}))(x)  \| \leq \varepsilon \Constant d^{\ExponDim_4} (d^{\theta(\ExponDim_1 + \ExponDim_2)}+ \|x\|^{\theta}), \\
\label{eq:ass:linear}
\| ( \mathcal{R} (\mathbf{f}^{N, d}_{\varepsilon, z}))(x)  \| \leq 
\big(1 + \tfrac{\Constant}{N}\big) \|x\| + \Constant d^{\ExponDim_2}( d^{\ExponDim_1} + \|z\|),\\
	\label{eq:ass:phi:linear}
	\|f_{N, d}(z, x) - f_{N, d}(z, y) \| \leq \Constant \|x - y \|,
	\end{gather}
	assume  for every  $N, d \in \N$, $\varepsilon \in (0, 1]$,  $\Phi \in \SubsetANNs_{d, \varepsilon}$ that there exist
	$(\phi_z)_{z \in \R^d} \subseteq \SubsetANNs_{d, \varepsilon}$ such that
	for all $x, z, \mathfrak{z} \in \R^d$ it holds that
	$ (\mathcal{R} (\phi_z)) (x) = ( \mathcal{R} (\mathbf{f}^{N, d}_{\varepsilon, z}))((\mathcal{R} (\Phi))(x))  $, 
	$\paramANN(\phi_z) \leq \paramANN(\Phi) + \Constant N^{\ExponN_1} d^{\ExponDim_3} \varepsilon^{-\ExponError}$,
	and
	$ \mathcal{D} (\phi_z) = \mathcal{D} (\phi_{\mathfrak{z}})$,
	assume for all $N, d \in \N$, $\varepsilon \in (0, 1]$,  $x = (x_i)_{i \in \{1, 2, \ldots, N\}} \in \R^{Nd}$, 
	$y = (y_i)_{i \in \{1, 2, \ldots, N\}} \in \R^{Nd}$ 
	that $  \mathcal{R} (\mathbf{g}^{N, d}_{\varepsilon}) \in C(\R^{Nd}, \R)$ and
	\begin{gather}
	\label{eq:perturbation:psi}
	|g_{N,d}(x) - ( \mathcal{R} (\mathbf{g}^{N, d}_{\varepsilon}) )(x) | \leq \varepsilon \Constant d^{\ExponDim_5} \left[ d^{\theta(\ExponDim_1 + \ExponDim_2)}+ \tfrac{1}{N} \textstyle \sum\limits_{i=1}^N \displaystyle \|x_i \|^{\theta} \right],\\
	\label{eq:increments:psi}
 |( \mathcal{R} (\mathbf{g}^{N, d}_{\varepsilon}) )(x) - ( \mathcal{R} (\mathbf{g}^{N, d}_{\varepsilon}) )(y)| \leq  \frac{\Constant d^{\ExponDim_6}}{N} \left[ \textstyle \sum\limits_{i=1}^N \displaystyle (d^{\theta(\ExponDim_1 + \ExponDim_2)} + \|x_i\|^{\theta} + \|y_i \|^{\theta})\|x_i- y_i\| \right],
 	\end{gather}
 	and assume for every  $N, d \in \N$, $\varepsilon \in (0, 1]$,  $\Phi_1, \Phi_2, \ldots, \Phi_{M_N} \in \SubsetANNs_{d, \varepsilon}$ with $\mathcal{D}(\Phi_1) = \mathcal{D}(\Phi_2) = \ldots = \mathcal{D}(\Phi_{M_N})$ that there exists 
 	$\varphi \in \ANNs$ such that for all $x \in \R^d$ it holds that
 	 $  \mathcal{R} (\varphi) \in C(\R^d, \R)$,
 	$( \mathcal{R} (\varphi))(x) =  ( \mathcal{R} (\mathbf{g}^{ M_N, d }_{ \varepsilon }) )( (\mathcal{R} (\Phi_1))(x), (\mathcal{R} (\Phi_2))(x),$ $\ldots, (\mathcal{R} (\Phi_{M_N}))(x))$, and $\paramANN(\varphi) \leq \Constant N^{\ExponN_2} ( N^{\ExponN_1 +1} d^{\ExponDim_3} \varepsilon^{-\ExponError} + \paramANN(\Phi_1))$.
	Then 
	there exist
		$
	c \in \R
	$ and
	$
	( 
	\Psi_{ d, \varepsilon } 
	)_{ (d , \varepsilon)  \in \N \times (0,1] } \subseteq \ANNs
	$
	such that
	for all 
	$
	d \in \N 
	$,
	$
	\varepsilon \in (0,1] 
	$
	it holds that
	$
	\mathcal{R}( \Psi_{ d, \varepsilon } )
	\in C( \R^{ d }, \R )
	$, $[
	\int_{ \R^d }
	|
	u_d(x) - ( \mathcal{R} (\Psi_{ d, \varepsilon }) )( x )
	|^p
	\,
	\nu_d(dx)
	]^{ \nicefrac{ 1 }{ p } }
	\leq
	\varepsilon$, 
	and
	\begin{align}
	\label{eq:prop:statement}
\paramANN( \Psi_{ d, \varepsilon } ) \leq c d^{\frac{\ExponDim_0( \ExponN_1+\ExponN_2 +1)}{\ExponN_0} +  \ExponDim_3  +  \ExponError \max\{\ExponDim_5 + \theta (\ExponDim_1 + \ExponDim_2), \ExponDim_4 + \ExponDim_6 + 2\theta (\ExponDim_1 + \ExponDim_2)\}}  \varepsilon^{-\frac{( \ExponN_1+ \ExponN_2 +1)}{\ExponN_0} -\ExponError}.
\end{align}
\end{theorem}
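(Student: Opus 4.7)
My plan is to realize the Monte Carlo-type scheme $x \mapsto g_{M_N, d}(Y^{N,d,x}_N(\omega))$ as a DNN for each fixed $\omega \in \Omega$ and suitable $N \in \N$, $\delta \in (0,1]$ to be chosen later, and then pass to a deterministic $\omega$ by a probabilistic selection. Concretely, for each $m \in \{1, \ldots, M_N\}$ I would inductively compose the networks $\mathbf{f}^{N,d}_{\delta,\,Z^{N,d,m}_{n-1}(\omega)}$ on top of $\idRelu_d$ via the composition assumption, producing $\Phi^{m,\omega}_N \in \SubsetANNs_{d,\delta}$ whose realization at $x$ equals an ``iterated DNN'' approximation $\mathcal{Y}^{N,d,m,x}_N(\omega)$ of $Y^{N,d,m,x}_N(\omega)$. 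The composition hypothesis ensures that each step adds at most $\Constant N^{\ExponN_1} d^{\ExponDim_3} \delta^{-\ExponError}$ parameters and preserves $\mathcal{D}$, so $\paramANN(\Phi^{m,\omega}_N) \lesssim N^{\ExponN_1+1} d^{\ExponDim_3} \delta^{-\ExponError}$ and all $\Phi^{m,\omega}_N$, $m = 1, \ldots, M_N$, share the same $\mathcal{D}$. Invoking the parallelization/composition assumption for $\mathbf{g}^{M_N,d}_\delta$ with $\Phi_m = \Phi^{m,\omega}_N$ then produces $\Psi_\omega \in \ANNs$ whose realization is $x \mapsto (\mathcal{R}(\mathbf{g}^{M_N,d}_\delta))(\mathcal{Y}^{N,d,1,x}_N(\omega), \ldots, \mathcal{Y}^{N,d,M_N,x}_N(\omega))$ and which satisfies $\paramANN(\Psi_\omega) \lesssim N^{\ExponN_1+\ExponN_2+1} d^{\ExponDim_3} \delta^{-\ExponError}$.

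Next I would bound $(\E[\int |u_d(x)-(\mathcal{R}(\Psi_\omega))(x)|^p\,\nu_d(dx)])^{1/p}$ by three triangle-inequality steps: (i) the Monte Carlo error \eqref{eq:ass:u}, contributing $\Constant d^{\ExponDim_0} N^{-\ExponN_0}$; (ii) the $g$-to-$\mathbf{g}$ approximation error evaluated on $Y_N$, estimated via \eqref{eq:perturbation:psi}; and (iii) the error from evaluating $\mathbf{g}^{M_N,d}_\delta$ on $\mathcal{Y}_N$ versus $Y_N$, controlled by the continuity bound \eqref{eq:increments:psi}. Terms (ii) and (iii) require uniform $L^{p\theta}$ (resp.\ $L^{2p\theta}$) moment bounds on $\|Y^{N,d,m,x}_n\|$ and $\|\mathcal{Y}^{N,d,m,x}_n\|$, which I would derive from Lemma~\ref{lem:apriori} applied to the recursion coming from \eqref{eq:ass:linear} (and an analogous bound for $Y$ derived from \eqref{eq:ass:phi:linear}) with $\alpha = 1 + \Constant/N$ and the $Z$-moment bound \eqref{eq:ass:Z:apriori}, yielding $\sup_{n,m} \|\cdot\|_{L^{p\theta}(\P\otimes\nu_d)} \lesssim d^{\ExponDim_1 + \ExponDim_2}$. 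For (iii) one also needs an $L^{2p}$-bound on $\mathcal{Y}_N - Y_N$: combining \eqref{eq:ass:phi:linear} with \eqref{eq:ass:dist:phi} gives the recursion $\|\mathcal{Y}_n - Y_n\| \leq \Constant \|\mathcal{Y}_{n-1} - Y_{n-1}\| + \delta \Constant d^{\ExponDim_4}(d^{\theta(\ExponDim_1+\ExponDim_2)} + \|\mathcal{Y}_{n-1}\|^\theta)$, and another invocation of Lemma~\ref{lem:apriori} gives $\lesssim \delta\, d^{\ExponDim_4 + \theta(\ExponDim_1+\ExponDim_2)}$. A Hölder decoupling on \eqref{eq:increments:psi} then yields Term~(iii) $\lesssim \delta\, d^{\ExponDim_6 + \ExponDim_4 + 2\theta(\ExponDim_1+\ExponDim_2)}$, while Term~(ii) is $\lesssim \delta\, d^{\ExponDim_5 + \theta(\ExponDim_1+\ExponDim_2)}$.

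Setting $X := \max\{\ExponDim_5 + \theta(\ExponDim_1+\ExponDim_2),\, \ExponDim_4 + \ExponDim_6 + 2\theta(\ExponDim_1+\ExponDim_2)\}$, I would choose $N = \lceil (3\Constant d^{\ExponDim_0}/\varepsilon)^{1/\ExponN_0} \rceil$ and $\delta = \varepsilon/(3\Constant d^X)$ so that each of (i)--(iii) is $\leq \varepsilon/3$ and the total $L^p(\P \otimes \nu_d)$ error is $\leq \varepsilon$. Plugging these back into $\paramANN(\Psi_\omega) \lesssim N^{\ExponN_1 + \ExponN_2 + 1} d^{\ExponDim_3} \delta^{-\ExponError}$ reproduces exactly the exponents in \eqref{eq:prop:statement}. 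Finally, since the $L^p(\P \otimes \nu_d)$-norm of $u_d - \mathcal{R}(\Psi_\omega)$ is at most $\varepsilon$, there exists $\omega_0 \in \Omega$ with $(\int |u_d - \mathcal{R}(\Psi_{\omega_0})|^p\,\nu_d(dx))^{1/p} \leq \varepsilon$; I then set $\Psi_{d,\varepsilon} := \Psi_{\omega_0}$. The measurability of $\omega \mapsto (\text{error})$ needed for this selection follows from the measurability hypothesis on $\mathfrak{z} \mapsto (\mathcal{R}(\mathbf{f}^{N,d}_{\delta,\mathfrak{z}}))(x)$.

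The main obstacle will be the bookkeeping of $d$-exponents through the three coupled Gronwall-type arguments (for the moments of $Y$, of $\mathcal{Y}$, and for the $L^{2p}$-error $\mathcal{Y} - Y$), in particular making sure that the Hölder step in Term~(iii) produces precisely the maximum appearing in \eqref{eq:prop:statement} rather than a strictly larger exponent. Additional care is needed to verify at each induction step that $\Phi^{m,\omega}_n \in \SubsetANNs_{d,\delta}$ (so that the composition hypothesis can be reapplied), and that all intermediate architecture dimensions coincide across $m$ so that the parallelization assumption for $\mathbf{g}^{M_N,d}_\delta$ is applicable.
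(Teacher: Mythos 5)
Your proposal follows the paper's proof almost step for step: building the iterated DNN $\Phi^{m,\omega}_N$ from $\idRelu_d$ via the composition hypothesis, combining via the $\mathbf{g}$-hypothesis, the same three-term triangle decomposition, the two Gronwall passes (Lemma~\ref{lem:apriori}) for the moment bounds and for the $L^{2p}$-distance between the true and DNN iterates, the H\"older decoupling in Term~(iii), the choice $N \asymp (d^{\ExponDim_0}/\varepsilon)^{1/\ExponN_0}$, $\delta \asymp \varepsilon d^{-\delta^*}$, and the probabilistic selection of $\omega_0$. The only slip is in the source of the linear-growth bound used to control $\|Y^{N,d,m,x}_n\|$: you attribute it to the Lipschitz estimate \eqref{eq:ass:phi:linear}, which alone only yields $\alpha = \Constant$ in the Gronwall recursion (and hence $\Constant^N$-growth, not a uniform-in-$N$ bound); the paper instead derives $\|f_{N,d}(z,x)\| \le (1 + \tfrac{\Constant}{N})\|x\| + \Constant d^{\ExponDim_2}(d^{\ExponDim_1}+\|z\|)$ by combining \eqref{eq:ass:dist:phi} with \eqref{eq:ass:linear} and letting the $\varepsilon$-term vanish, which is what supplies the crucial $\alpha = 1 + \Constant/N$ you correctly name but would not get from your cited hypothesis. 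With that one correction the argument and the exponent bookkeeping are exactly as in the paper.
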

\begin{proof}[Proof of Theorem~\ref{thm:main}]
	Throughout this proof let $\gamma = 46  e^{\Constant}   \Constant^2  ( 4 e^{\Constant+1}  \Constant^3   )^{ 2\theta}$, let $\delta = \max\{\ExponDim_5 + \theta (\ExponDim_1 + \ExponDim_2), \ExponDim_4 + \ExponDim_6 + 2\theta (\ExponDim_1 + \ExponDim_2)\}$, let $X^{N, d, x, \varepsilon}_n = (X^{N, d, m, x, \varepsilon}_n)_{m \in \{1, 2, \ldots, M_{N}\}} \colon \Omega \to \R^{M_N d}$, $n \in \{0, 1, \ldots, N\}$, $\varepsilon \in (0, 1]$, $x \in \R^d$, $d, N \in \N$, be the random variables  which satisfy  for all $N,  d \in \N$,  $m \in \{1, 2, \ldots, M_{N}\}$,   $x \in \R^d$,  $\varepsilon \in (0, 1]$, $n \in \{1, 2, \ldots, N\}$, $\omega 
	\in \Omega$ that 	$X^{N, d, m, x, \varepsilon}_{0}(\omega)  = x$ and
	\begin{align}
	\label{eq:iteration:app}
	\begin{split}
	X^{N, d, m, x, \varepsilon}_{n}(\omega) &= \Big( \mathcal{R} \Big(\mathbf{f}^{N, d}_{\varepsilon, Z^{N, d, m}_{n-1}(\omega)}\Big) \Big)  \big( X^{N, d, m, x, \varepsilon}_{n-1}(\omega)\big),
	\end{split}
	\end{align}
and	let $(\mathcal{N}_{d, \varepsilon})_{(d, \varepsilon) \in \N \times (0, 1]} \subseteq \N$ and $(\mathcal{E}_{d, \varepsilon})_{(d, \varepsilon) \in \N \times (0, 1]} \subseteq (0, 1]$ satisfy for all $\varepsilon \in (0, 1]$, $d \in \N$ that
	\begin{align}
	\label{eq:N:varepsilon}
	\mathcal{N}_{d, \varepsilon} = \min\! \left( \N \cap \big[\big(\tfrac{2\Constant d^{\ExponDim_0}}{\varepsilon}\big)^{\nicefrac{1}{\ExponN_0}}, \infty\big) \right) \qandq \mathcal{E}_{d, \varepsilon} = \tfrac{\varepsilon}{\gamma d^{  \delta}} .
	\end{align}
Note that for all  $N,  d \in \N$,  $\varepsilon \in (0, 1]$,  $n \in \{0, 1, 2, \ldots, N\}$ it holds that
\begin{align}
\big(\R^d \ni x \mapsto X^{N, d, x, \varepsilon}_n \in \R^{M_N d}\big) \in C(\R^d, \R^{M_N d}).
\end{align}
This implies that for all $N, d \in \N$, $\varepsilon \in (0, 1]$ it holds that 
	\begin{align}
	\label{eq:Lp:triangle}
	\begin{split}
	&\left(\E \! \left[ \int_{\R^d} \big|u_d(x) - ( \mathcal{R} (\mathbf{g}^{M_N, d}_{\varepsilon}) )(X^{N, d, x, \varepsilon}_N) \big|^p \, \nu_d (dx) \right] \right)^{\!\nicefrac{1}{p}} \\
	& \leq 	\left(\E \! \left[ \int_{\R^d} \big|u_d(x) - g_{M_N,d} (Y^{N, d, x}_N) \big|^p \, \nu_d (dx) \right] \right)^{\!\nicefrac{1}{p}} \\
	& + 	\left(\E \! \left[ \int_{\R^d} \big| g_{M_N,d} (Y^{N, d, x}_N) - (\mathcal{R} (\mathbf{g}^{M_N, d}_{\varepsilon} ))(Y^{N, d, x}_N) \big|^p \, \nu_d (dx) \right] \right)^{\!\nicefrac{1}{p}}\\
	& + 	\left(\E \! \left[ \int_{\R^d} \big|  (\mathcal{R} (\mathbf{g}^{M_N, d}_{\varepsilon}) )(Y^{N, d, x}_N) - ( \mathcal{R} (\mathbf{g}^{M_N, d}_{\varepsilon}) )(X^{N, d, x, \varepsilon}_N) \big|^p \, \nu_d (dx) \right] \right)^{\!\nicefrac{1}{p}}.
	\end{split}
	\end{align}
	Next observe that \eqref{eq:perturbation:psi} ensures for all $N, d \in \N$, $\varepsilon \in (0, 1]$ that 
	\begin{align}
	\label{eq:psi:Lp}
	\begin{split}
	&\left(\E \! \left[ \int_{\R^d} \big| g_{M_N,d} (Y^{N, d, x}_N) - (\mathcal{R} (\mathbf{g}^{M_N, d}_{\varepsilon}) )(Y^{N, d, x}_N) \big|^p \, \nu_d (dx) \right] \right)^{\!\nicefrac{1}{p}} \\
	&\leq  \varepsilon \Constant d^{\ExponDim_5} \left(\E \! \left[ \int_{\R^d} \Big| d^{\theta(\ExponDim_1 + \ExponDim_2)} + \tfrac{1}{M_N} \textstyle \sum\limits_{m=1}^{M_N} \displaystyle \| Y^{N, d, m, x}_N \|^{\theta} \Big|^p \, \nu_d (dx) \right] \right)^{\!\nicefrac{1}{p}}\\
	& \leq  \varepsilon \Constant d^{\ExponDim_5} \left[ d^{\theta(\ExponDim_1 + \ExponDim_2)}+ \tfrac{1}{M_N} \textstyle \sum\limits_{m=1}^{M_N} \displaystyle  \left(\E \! \left[ \int_{\R^d}  \| Y^{N, d, m, x}_N \|^{p\theta} \, \nu_d (dx) \right] \right)^{\!\nicefrac{1}{p}} \right].
	\end{split}
	\end{align}
In addition, note that 	\eqref{eq:ass:dist:phi} and \eqref{eq:ass:linear} assure that for all $N, d \in \N$, $\varepsilon \in (0, 1]$,  $x, z \in \R^d$ it holds that 
\begin{align}
\begin{split}
\|f_{N, d}(z, x) \| & \leq \|f_{N, d}(z, x) - ( \mathcal{R} (\mathbf{f}^{N, d}_{\varepsilon, z}))(x)  \| + \|( \mathcal{R} (\mathbf{f}^{N, d}_{\varepsilon, z}))(x)  \|\\
& \leq \varepsilon \Constant d^{\ExponDim_4} (d^{\theta(\ExponDim_1 + \ExponDim_2)}+ \|x\|^{\theta}) + \big(1 + \tfrac{\Constant}{N}\big) \|x\| + \Constant d^{\ExponDim_2}( d^{\ExponDim_1} + \|z\|).
\end{split}
\end{align}
This proves that for all $N, d \in \N$, $x, z \in \R^d$ it holds that  
\begin{align}
\|f_{N, d}(z, x) \|  \leq \big(1 + \tfrac{\Constant}{N}\big) \|x\| + \Constant d^{\ExponDim_2} ( d^{\ExponDim_1} + \|z\|).
\end{align}
Hence, we obtain that for all $N, d \in \N$, $m \in \{1, 2, \ldots, M_N\}$, $x \in \R^d$,  $ n \in \{1, 2, \ldots, N\}$ it holds that
\begin{align}
\label{eq:Y:bound}
\begin{split}
\| Y^{N, d, m, x}_n \| & = \big\| f_{N, d} \big(Z^{N, d, m}_{n-1}, Y^{N, d, m, x}_{n-1}\big)  \big\| \\
& \leq \big(1 + \tfrac{\Constant}{N}\big) \|Y^{N, d, m, x}_{n-1} \| + \Constant d^{\ExponDim_2} \big[ d^{\ExponDim_1} + \|Z^{N, d, m}_{n-1}\|\big].
\end{split}
\end{align}
Moreover, note that \eqref{eq:ass:linear} assures that for all $N, d \in \N$, $m \in \{1, 2, \ldots, M_N\}$, $x \in \R^d$, $\varepsilon \in (0, 1]$,  $ n \in \{1, 2, \ldots, N\}$ it holds that 
\begin{align}
\begin{split}
\| X^{N, d, m, x, \varepsilon}_{n}  \| & = \Big\| \Big( \mathcal{R} \Big(\mathbf{f}^{N, d}_{\varepsilon, Z^{N, d, m}_{n-1}} \Big) \Big)  \big( X^{N, d, m, x, \varepsilon}_{n-1}\big)  \Big\| \\
& \leq \big(1 + \tfrac{\Constant}{N}\big)\|X^{N, d, m, x, \varepsilon}_{n-1} \| + \Constant d^{\ExponDim_2} \big[ d^{\ExponDim_1} + \|Z^{N, d, m}_{n-1}\|\big].
\end{split}
\end{align}
Lemma~\ref{lem:apriori} (with $N = n$,  $p = 2p\theta$, $\alpha$ = $(1 + \frac{\Constant}{N})$, $\beta = \Constant d^{\ExponDim_2}$, 
$\gamma = d^{\ExponDim_1}$,
 $Z_i = \| Z^{N, d, m}_{i} \|$ for $N, d \in \N$, $ n \in \{1, 2, \ldots, N\}$, $m \in \{1, 2, \ldots, M_N\}$, $i \in \{0, 1, \ldots, n-1\}$  in the notation of Lemma~\ref{lem:apriori}),
 \eqref{eq:Y:bound}, and \eqref{eq:ass:Z:apriori} therefore demonstrate that for all $N, d \in \N$, $m \in \{1, 2, \ldots, M_N\}$, $x \in \R^d$,  $\varepsilon \in (0, 1]$,  $ n \in \{1, 2, \ldots, N\}$  it holds that 
\begin{align}
\begin{split}
& \max\left\{ \left(	\E \big[ \| Y^{N, d, m, x}_n \|^{2 p \theta} \big] \right)^{\nicefrac{1}{(2 p \theta)}},  \left(	\E \big[ \|X^{N, d, m, x, \varepsilon}_{n} \|^{2 p \theta} \big] \right)^{\nicefrac{1}{(2 p \theta)}} \right\} \\
&\leq \big(1 + \tfrac{\Constant}{N}\big)^n \|x\|
+ e^{(1 + \frac{\Constant}{N})} \Constant d^{\ExponDim_2} \left[ d^{\ExponDim_1}+ \sup\nolimits_{i \in \{0, 1, \ldots, n -1 \}} \left( \E\! \left[ \|Z^{N, d, m}_{i} \|^{2 p \theta} \right]\right)^{\!\nicefrac{1}{(2 p \theta)}} \right]\\
& \leq e^{\Constant} \|x\| + e^{\Constant+1} \Constant d^{\ExponDim_2} \big[ d^{\ExponDim_1} + \Constant d^{\ExponDim_1} \big] \leq e^{\Constant} \|x\| +  2 e^{\Constant+1}  \Constant^2 d^{\ExponDim_1 + \ExponDim_2}\\
& \leq 2 e^{\Constant+1}  \Constant^2 \big[\|x\| + d^{\ExponDim_1 + \ExponDim_2} \big].
\end{split}
\end{align}
This and the fact that $\forall \, a, b \in \R \colon |a+b|^{\theta} \leq 2^{\theta-1}(|a|^{\theta}+|b|^{\theta})$ prove  for all $N, d \in \N$, $m \in \{1, 2, \ldots, M_N\}$, $x \in \R^d$,  $\varepsilon \in (0, 1]$,  $ n \in \{1, 2, \ldots, N\}$  that 
\begin{align}
\label{eq:lp_norms:space}
\begin{split}
	  &\max\left\{  \E \big[ \| Y^{N, d, m, x}_n \|^{2 p \theta} \big], \E \big[ \|X^{N, d, m, x, \varepsilon}_{n} \|^{2 p \theta} \big] \right\} \\
	  &\leq \left(2 e^{\Constant+1}  \Constant^2 \big[\|x\| + d^{\ExponDim_1 + \ExponDim_2} \big] \right)^{2 p \theta} = \left( 2 e^{\Constant+1}  \Constant^2 \right)^{2 p \theta} \big[\|x\| + d^{\ExponDim_1 + \ExponDim_2} \big]^{2 p \theta}\\ &\leq 2^{2 p (\theta-1) }  \left( 2 e^{\Constant+1}  \Constant^2  \right)^{2 p \theta}\big[\|x\|^{\theta} + d^{\theta(\ExponDim_1 + \ExponDim_2)} \big]^{2p}\\
	& \leq  ( 4 e^{\Constant+1}  \Constant^2   )^{2 p \theta} \big[\|x\|^{\theta} + d^{\theta(\ExponDim_1 + \ExponDim_2)} \big]^{2p}.
\end{split}
\end{align}
This and \eqref{eq:ass:Z:apriori} establish that for all $N, d \in \N$, $m \in \{1, 2, \ldots, M_N\}$, $\varepsilon \in (0, 1]$  it holds that 	
\begin{align*}
\label{eq:lp_norms:integral}
& \max\left\{ \left(\E \! \left[ \int_{\R^d}  \| Y^{N, d, m, x}_N \|^{2p\theta} \, \nu_d (dx) \right] \right)^{\!\nicefrac{1}{(2p)}}, \left(\E \! \left[ \int_{\R^d}  \| X^{N, d, m, x, \varepsilon}_{N}  \|^{2p\theta} \, \nu_d (dx) \right] \right)^{\!\nicefrac{1}{(2p)}} \right\}\\
& \leq  ( 4 e^{\Constant+1}  \Constant^2   )^{ \theta} \left( \int_{\R^d}   \big[\|x\|^{\theta} + d^{\theta(\ExponDim_1 + \ExponDim_2)} \big]^{2p}\, \nu_d (dx)  \right)^{\!\nicefrac{1}{(2p)}}\\
& \leq  ( 4 e^{\Constant+1}  \Constant^2   )^{ \theta} \left[ \left( \int_{\R^d}    \|x\|^{ 2p \theta} \, \nu_d (dx)  \right)^{\!\nicefrac{1}{(2p)}} + d^{\theta(\ExponDim_1 + \ExponDim_2)} \right]\\
& \leq  ( 4 e^{\Constant+1}  \Constant^2   )^{ \theta}   \big[ \Constant^{\theta} d^{\theta (\ExponDim_1 + \ExponDim_2)} + d^{\theta(\ExponDim_1 + \ExponDim_2)}\big] \leq   2  ( 4 e^{\Constant+1}  \Constant^3   )^{ \theta}  d^{\theta (\ExponDim_1 + \ExponDim_2)}  . \numberthis
\end{align*}
Hence, we obtain that for all $N, d \in \N$, $m \in \{1, 2, \ldots, M_N\}$ it holds that 	
\begin{align}
\begin{split}
\left(\E \! \left[ \int_{\R^d}  \| Y^{N, d, m, x}_N \|^{p\theta} \, \nu_d (dx) \right] \right)^{\!\nicefrac{1}{p}}  &\leq \left(\E \! \left[ \int_{\R^d}  \| Y^{N, d, m, x}_N \|^{2p\theta} \, \nu_d (dx) \right] \right)^{\!\nicefrac{1}{(2p)}}\\
& \leq 2  ( 4 e^{\Constant+1}  \Constant^3   )^{ \theta}  d^{\theta(\ExponDim_1 + \ExponDim_2)}.
\end{split}
\end{align}
Combining this and \eqref{eq:psi:Lp} demonstrates that for all $N, d \in \N$, $\varepsilon \in (0, 1]$ it holds
that 
\begin{align}
\label{eq:bound:2nd_term}
\begin{split}
& \left(\E \! \left[ \int_{\R^d} \big| g_{M_N,d} (Y^{N, d, x}_N) - (\mathcal{R} (\mathbf{g}^{M_N, d}_{\varepsilon}) )(Y^{N, d, x}_N) \big|^p \, \nu_d (dx) \right] \right)^{\!\nicefrac{1}{p}} \\
& \leq \varepsilon \Constant d^{\ExponDim_5} \big[ d^{\theta(\ExponDim_1 + \ExponDim_2)} +2  ( 4 e^{\Constant+1}  \Constant^3   )^{ \theta}  d^{\theta(\ExponDim_1 + \ExponDim_2)} \big] \leq 3 \varepsilon \Constant  ( 4 e^{\Constant+1}  \Constant^3   )^{ \theta} d^{\ExponDim_5 + \theta (\ExponDim_1 + \ExponDim_2)}.
\end{split}
\end{align}
In addition, observe that \eqref{eq:increments:psi} ensures that for all $N, d \in \N$, $x \in \R^d$, $\varepsilon \in (0, 1]$ it holds that  
\begin{align*}
&  \big|  (\mathcal{R} (\mathbf{g}^{M_N, d}_{\varepsilon}) )(Y^{N, d, x}_N) - ( \mathcal{R} (\mathbf{g}^{M_N, d}_{\varepsilon}) )(X^{N, d, x, \varepsilon}_N) \big| \numberthis \\
& \leq  \frac{\Constant d^{\ExponDim_6}}{M_N} \bigg[ \textstyle \sum\limits_{m=1}^{M_N} \displaystyle \big(d^{\theta(\ExponDim_1 + \ExponDim_2)} + \|Y^{N, d, m, x}_N\|^{\theta} + \|X^{N, d, m, x, \varepsilon}_N \|^{\theta}\big)  \| Y^{N, d, m, x}_N - X^{N, d, m, x, \varepsilon}_N\| \bigg].
\end{align*}
This ensures  for all $N, d \in \N$, $\varepsilon \in (0, 1]$  that
\begin{align}
\begin{split}
& \left(\E \! \left[ \int_{\R^d} \big|  (\mathcal{R} (\mathbf{g}^{M_N, d}_{\varepsilon}) )(Y^{N, d, x}_N) - ( \mathcal{R} (\mathbf{g}^{M_N, d}_{\varepsilon}) )(X^{N, d, x, \varepsilon}_N) \big|^p \, \nu_d (dx) \right] \right)^{\!\nicefrac{1}{p}}\\
& \leq \frac{\Constant d^{\ExponDim_6}}{ M_N }  \sum_{m=1}^{M_N} \bigg(\E \bigg[ \int_{\R^d} \big(d^{\theta(\ExponDim_1 + \ExponDim_2)} + \|Y^{N, d, m, x}_N\|^{\theta} + \|X^{N, d, m, x, \varepsilon}_N \|^{\theta}\big)^p \\
& \qquad  \qquad \qquad \qquad \qquad \cdot \| Y^{N, d, m, x}_N -X^{N, d, m, x,  \varepsilon}_N\|^p  \, \nu_d (dx) \bigg] \bigg)^{\!\nicefrac{1}{p}}.
\end{split}
\end{align}
H\"older's inequality hence assures  for all $N, d \in \N$, $\varepsilon \in (0, 1]$  that
\begin{align*}
\label{eq:bound:3rd_term}
& \left(\E \! \left[ \int_{\R^d} \big|  (\mathcal{R} (\mathbf{g}^{M_N, d}_{\varepsilon}) )(Y^{N, d, x}_N) - ( \mathcal{R}( \mathbf{g}^{M_N, d}_{\varepsilon}) )(X^{N, d, x, \varepsilon}_N) \big|^p \, \nu_d (dx) \right] \right)^{\!\nicefrac{1}{p}}\\
& \leq \frac{\Constant d^{\ExponDim_6}}{ M_N }  \sum_{m=1}^{M_N} \bigg(\E \bigg[ \int_{\R^d} \big(d^{\theta(\ExponDim_1 + \ExponDim_2)} + \|Y^{N, d, m, x}_N\|^{\theta} + \|X^{N, d, m, x, \varepsilon}_N \|^{\theta}\big)^{2p} \, \nu_d (dx) \bigg] \bigg)^{\!\nicefrac{1}{(2p)}} \\
& \qquad  \qquad \qquad \cdot \bigg(\E \bigg[ \int_{\R^d} \| Y^{N, d, m, x}_N - X^{N, d, m, x, \varepsilon}_N\|^{2p}  \, \nu_d (dx) \bigg] \bigg)^{\!\nicefrac{1}{(2p)}}. \numberthis
\end{align*}
Moreover, note that \eqref{eq:lp_norms:integral} implies that for all $N, d \in \N$, $m \in \{1, 2, \ldots, M_N\}$, $\varepsilon \in (0, 1]$  it holds that
\begin{align}
\label{eq:bound:3rd_term:1}
\begin{split}
&\left(\E \bigg[ \int_{\R^d} \big(d^{\theta(\ExponDim_1 + \ExponDim_2)} + \|Y^{N, d, m, x}_N\|^{\theta} + \|X^{N, d, m, x, \varepsilon}_N \|^{\theta}\big)^{2p} \, \nu_d (dx) \bigg] \right)^{\!\nicefrac{1}{(2p)}}\\
& \leq d^{\theta(\ExponDim_1 + \ExponDim_2)}+ \bigg(\E \bigg[ \int_{\R^d} \|Y^{N, d, m, x}_N\|^{2p\theta}  \, \nu_d (dx) \bigg] \bigg)^{\!\nicefrac{1}{(2p)}} \\
& + \bigg(\E \bigg[ \int_{\R^d}  \|X^{N, d, m, x, \varepsilon}_N \|^{2p \theta} \, \nu_d (dx) \bigg] \bigg)^{\!\nicefrac{1}{(2p)}}\\
&\leq  d^{\theta(\ExponDim_1 + \ExponDim_2)} + 4  ( 4 e^{\Constant+1}  \Constant^3   )^{ \theta}  d^{\theta (\ExponDim_1 + \ExponDim_2)}  \leq 5  ( 4 e^{\Constant+1}  \Constant^3   )^{ \theta}  d^{\theta (\ExponDim_1 + \ExponDim_2)}. 
\end{split}
\end{align}
Next observe that \eqref{eq:ass:phi:linear} and \eqref{eq:ass:dist:phi} prove that  for all $N, d \in \N$, $m \in \{1, 2, \ldots, M_N\}$, $x \in \R^d$, $\varepsilon \in (0, 1]$, $n \in \{1, 2, \ldots, N\}$ it holds that
\begin{align}
\begin{split}
&\| Y^{N, d, m, x}_n - X^{N, d, m, x, \varepsilon}_n\| \\
&= \left\| f_{N, d} \big(Z^{N, d, m}_{n-1}, Y^{N, d, m, x}_{n-1}\big) -  \Big( \mathcal{R} \Big(\mathbf{f}^{N, d}_{\varepsilon, Z^{N, d, m}_{n-1}} \Big) \Big)  \big( X^{N, d, m, x, \varepsilon}_{n-1}\big) \right\|\\
& \leq \left\| f_{N, d} \big(Z^{N, d, m}_{n-1}, Y^{N, d, m, x}_{n-1}\big) - f_{N, d} \big(Z^{N, d, m}_{n-1},  X^{N, d, m, x, \varepsilon}_{n-1} \big) \right\|\\
& + \left\| f_{N, d} \big(Z^{N, d, m}_{n-1},  X^{N, d, m, x, \varepsilon}_{n-1} \big) -  \Big( \mathcal{R} \Big(\mathbf{f}^{N, d}_{\varepsilon, Z^{N, d, m}_{n-1}} \Big) \Big)  \big( X^{N, d, m, x, \varepsilon}_{n-1}\big) \right\|\\
& \leq \Constant  \| Y^{N, d, m, x}_{n-1} - X^{N, d, m, x, \varepsilon}_{n-1}\| + \varepsilon \Constant d^{\ExponDim_4} \left( d^{\theta(\ExponDim_1 + \ExponDim_2)} + \| X^{N, d, m, x, \varepsilon}_{n-1} \|^{\theta} \right).
\end{split}
\end{align}
Lemma~\ref{lem:apriori} (with $N = N$, $p = 2p$, $\alpha$ = $\Constant$, $\beta =  \varepsilon \Constant d^{\ExponDim_4}$, $\gamma = d^{\theta(\ExponDim_1 + \ExponDim_2)}$, $Z_n = \|X^{N, d, m, x, \varepsilon}_{n} \|^{\theta} $, $X_n =  \| Y^{N, d, m, x}_n - X^{N, d, m, x, \varepsilon}_n \|$ for $N, d \in \N$, $m \in \{1, 2, \ldots, M_N\}$, $x \in \R^d$, $\varepsilon \in (0, 1]$, $n \in \{1, 2, \ldots, N\}$  in the notation of Lemma~\ref{lem:apriori}) and \eqref{eq:lp_norms:space} hence ensure for all  $N, d \in \N$, $m \in \{1, 2, \ldots, M_N\}$, $x \in \R^d$, $\varepsilon \in (0, 1]$ that 
\begin{align}
\begin{split}
& \left(\E \big[ \| Y^{N, d, m, x}_N - X^{N, d, m, x, \varepsilon}_N\|^{2p} \big]\right)^{\!\nicefrac{1}{(2p)}}\\
& \leq e^{\Constant}  \varepsilon \Constant d^{\ExponDim_4} \left[ d^{\theta(\ExponDim_1 + \ExponDim_2)} + \sup\nolimits_{i \in \{0, 1, \ldots, N-1\}} \left(\E \big[ \|  X^{N, d, m, x, \varepsilon}_{i}\|^{2p \theta} \big]\right)^{\!\nicefrac{1}{(2p)}} \right]\\
& \leq \varepsilon e^{\Constant}   \Constant d^{\ExponDim_4} \left[ d^{\theta(\ExponDim_1 + \ExponDim_2)} + \left(( 4 e^{\Constant+1}  \Constant^2   )^{2 p \theta} \big[\|x\|^{\theta} + d^{\theta(\ExponDim_1 + \ExponDim_2)} \big]^{2p} \right)^{\!\nicefrac{1}{(2p)}} \right]\\
& = \varepsilon  e^{\Constant}   \Constant d^{\ExponDim_4}  \left[ d^{\theta(\ExponDim_1 + \ExponDim_2)} + ( 4 e^{\Constant+1}  \Constant^2   )^{ \theta} \big[\|x\|^{\theta} + d^{\theta(\ExponDim_1 + \ExponDim_2)} \big]   \right]\\
& \leq 2 \varepsilon  e^{\Constant}   \Constant d^{\ExponDim_4} ( 4 e^{\Constant+1}  \Constant^2   )^{ \theta} \big[\|x\|^{\theta} + d^{\theta(\ExponDim_1 + \ExponDim_2)} \big].
\end{split}
\end{align}
This and \eqref{eq:ass:Z:apriori} demonstrate that for all $N, d \in \N$, $m \in \{1, 2, \ldots, M_N\}$, $\varepsilon \in (0, 1]$ it holds that
\begin{align}
\begin{split}
&  \bigg(\E \bigg[ \int_{\R^d} \| Y^{N, d, m, x}_N - X^{N, d, m, x, \varepsilon}_N\|^{2p}  \, \nu_d (dx) \bigg] \bigg)^{\!\nicefrac{1}{(2p)}}\\
& \leq  2 \varepsilon  e^{\Constant}   \Constant d^{\ExponDim_4} ( 4 e^{\Constant+1}  \Constant^2   )^{ \theta} \left[ \int_{\R^d} \big( \|x\|^{\theta} +d^{\theta(\ExponDim_1 + \ExponDim_2)} \big)^{2p}\, \nu_d (dx) \right]^{\nicefrac{1}{(2p)}}\\
& \leq 2 \varepsilon  e^{\Constant}   \Constant d^{\ExponDim_4} ( 4 e^{\Constant+1}  \Constant^2   )^{ \theta} \left(   \left[ \int_{\R^d}  \|x\|^{2p \theta}  \, \nu_d (dx) \right]^{\nicefrac{1}{(2p)}} + d^{\theta(\ExponDim_1 + \ExponDim_2)}\right)\\
& \leq   2 \varepsilon  e^{\Constant}   \Constant d^{\ExponDim_4} ( 4 e^{\Constant+1}  \Constant^2   )^{ \theta}  \big( \Constant^{\theta} d^{\theta (\ExponDim_1 + \ExponDim_2)} + d^{\theta(\ExponDim_1 + \ExponDim_2)}  \big)\\
& \leq 4 \varepsilon  e^{\Constant}   \Constant  ( 4 e^{\Constant+1}  \Constant^3   )^{ \theta}  d^{\ExponDim_4 + \theta (\ExponDim_1 + \ExponDim_2)}.
\end{split}
\end{align}
Combining this with \eqref{eq:bound:3rd_term} and \eqref{eq:bound:3rd_term:1} establishes that for all $N, d \in \N$, $\varepsilon \in (0, 1]$  it holds that
\begin{align}
\begin{split}
& \left(\E \! \left[ \int_{\R^d} \big|  (\mathcal{R} (\mathbf{g}^{M_N, d}_{\varepsilon}) )(Y^{N, d, x}_N) - ( \mathcal{R} (\mathbf{g}^{M_N, d}_{\varepsilon}) )(X^{N, d, x, \varepsilon}_N) \big|^p \, \nu_d (dx) \right] \right)^{\!\nicefrac{1}{p}} \\
 & \leq \Constant d^{\ExponDim_6} \cdot 5  ( 4 e^{\Constant+1}  \Constant^3   )^{ \theta}  d^{\theta (\ExponDim_1 + \ExponDim_2)} \cdot 4 \varepsilon  e^{\Constant}   \Constant  ( 4 e^{\Constant+1}  \Constant^3   )^{ \theta}  d^{\ExponDim_4 + \theta (\ExponDim_1 + \ExponDim_2)}\\
 & \leq  20 \varepsilon  e^{\Constant}   \Constant^2  ( 4 e^{\Constant+1}  \Constant^3   )^{ 2\theta} d^{\ExponDim_4 + \ExponDim_6 + 2\theta (\ExponDim_1 + \ExponDim_2)}.
\end{split}
\end{align}
This, \eqref{eq:ass:u}, \eqref{eq:Lp:triangle}, and \eqref{eq:bound:2nd_term} prove for all $N, d \in \N$, $\varepsilon \in (0, 1]$ that
\begin{align*}
	&\left(\E \! \left[ \int_{\R^d} \big|u_d(x) - ( \mathcal{R} (\mathbf{g}^{M_N, d}_{\varepsilon}) )(X^{N, d, x, \varepsilon}_N) \big|^p \, \nu_d (dx) \right] \right)^{\!\nicefrac{1}{p}} \\
& \leq  \Constant d^{\ExponDim_0} N^{-\ExponN_0} +   3 \varepsilon \Constant  ( 4 e^{\Constant+1}  \Constant^3   )^{ \theta} d^{\ExponDim_5 + \theta (\ExponDim_1 + \ExponDim_2)}  + 20 \varepsilon  e^{\Constant}   \Constant^2  ( 4 e^{\Constant+1}  \Constant^3   )^{ 2\theta} d^{\ExponDim_4 + \ExponDim_6+ 2\theta (\ExponDim_1 + \ExponDim_2)}\\
& \leq \Constant d^{\ExponDim_0} N^{-\ExponN_0} +  23 \varepsilon  e^{\Constant}   \Constant^2  ( 4 e^{\Constant+1}  \Constant^3   )^{ 2\theta} d^{\delta}\\
& =  \Constant d^{\ExponDim_0} N^{-\ExponN_0} +  \tfrac{\varepsilon \gamma d^{\delta}}{2} . \numberthis
\end{align*}
Combining this and \eqref{eq:N:varepsilon} assures that for all $d \in \N$, $\varepsilon \in (0, 1]$ it holds that
\begin{align}
\begin{split}
& \left(\E \! \left[ \int_{\R^d} \Big|u_d(x) - \Big( \mathcal{R} \Big(\mathbf{g}^{ M_{\mathcal{N}_{d, \varepsilon}}, d }_{ \mathcal{E}_{d, \varepsilon} }\Big) \Big) \Big(X^{\mathcal{N}_{d, \varepsilon}, d, x, \mathcal{E}_{d, \varepsilon}}_{\mathcal{N}_{d, \varepsilon}} \Big) \Big|^p \, \nu_d (dx) \right] \right)^{\!\nicefrac{1}{p}} \leq  \frac{\varepsilon}{2} + \frac{\varepsilon}{2} = \varepsilon.
\end{split}
\end{align}
This and, e.g., \cite[Corollary~2.4]{JentzenSalimovaWelti2018} establish that there exists $\mathfrak{w} = (\mathfrak{w}_{d, \varepsilon})_{(d, \varepsilon) \in \N \times (0, 1]} \colon  \N \times (0, 1] \to \Omega$ which satisfies 
 for all $d \in \N$, $\varepsilon \in (0, 1]$  that
\begin{align}
\label{eq:error:last}
\begin{split}
& \left[ \int_{\R^d} \Big|u_d(x) - \Big( \mathcal{R} \Big( \mathbf{g}^{ M_{\mathcal{N}_{d, \varepsilon}}, d }_{ \mathcal{E}_{d, \varepsilon} } \Big) \Big)\Big(X^{\mathcal{N}_{d, \varepsilon}, d, x, \mathcal{E}_{d, \varepsilon}}_{\mathcal{N}_{d, \varepsilon}}(\mathfrak{w}_{d, \varepsilon})\Big) \Big|^p \, \nu_d (dx) \right]^{\nicefrac{1}{p}} \leq  \varepsilon.
\end{split}
\end{align}
Next note that for all $N,  d \in \N$,  $m \in \{1, 2, \ldots, M_{N}\}$,   $x \in \R^d$,  $\varepsilon \in (0, 1]$,  $\omega \in \Omega$ it holds that 	$X^{N, d, m, x, \varepsilon}_{0}(\omega)  = (\mathcal{R}( \idRelu_d))(x)$.
The assumption that for all $ d\in \N$, $\varepsilon \in (0, 1]$ it holds that 	$\idRelu_d \in \SubsetANNs_{d, \varepsilon}$ and   \eqref{eq:iteration:app} hence ensure 
that
 there exist $(\Phi^{N, d, m, \varepsilon, \omega}_{n})_{m \in \{1, 2, \ldots, M_{N}\}} \subseteq \SubsetANNs_{d, \varepsilon}$, $\omega \in \Omega$, $n \in \{0, 1, 2, \ldots, N\}$, $\varepsilon \in (0, 1]$, $d, N \in \N$, 
 which satisfy  for all $N,  d \in \N$,      $\varepsilon \in (0, 1]$, $n \in \{0, 1, 2, \ldots, N\}$, $\omega \in \Omega$ , $m \in \{1, 2, \ldots, M_{N}\}$, $x \in \R^d$  that  $\paramANN(\Phi^{N, d, m, \varepsilon, \omega}_{n}) \leq \paramANN(\idRelu_d)+ n \Constant N^{\ExponN_1} d^{\ExponDim_3} \varepsilon^{-\ExponError}$, $\mathcal{D}(\Phi^{N, d, m, \varepsilon, \omega}_{n}) = \mathcal{D}(\Phi^{N, d, 1, \varepsilon, \omega}_{n})$, and
\begin{align}
(\mathcal{R}( \Phi^{N, d, m, \varepsilon, \omega}_{n}))(x) = \Big( \mathcal{R} \Big(\mathbf{f}^{N, d}_{\varepsilon, Z^{N, d, m}_{n-1}(\omega)} \Big) \Big)  \big( X^{N, d, m, x, \varepsilon}_{n-1}(\omega)\big)= X^{N, d, m, x, \varepsilon}_{n}(\omega).
\end{align}
The assumption that for all $d \in \N$ it holds that $\paramANN(\idRelu_d) \leq  \Constant d^{\ExponDim_3} $ therefore implies  for all $N,  d \in \N$,  $m \in \{1, 2, \ldots, M_{N}\}$,    $\varepsilon \in (0, 1]$, $\omega \in \Omega$ that
\begin{align}
\paramANN(\Phi^{N, d, m, \varepsilon, \omega}_{N}) \leq  \Constant d^{\ExponDim_3}+ N \Constant N^{\ExponN_1} d^{\ExponDim_3} \varepsilon^{-\ExponError} \leq \Constant d^{\ExponDim_3}+ \Constant N^{\ExponN_1+1} d^{\ExponDim_3} \varepsilon^{-\ExponError}.
\end{align}
Therefore, we obtain that there exist $\Psi^{N, d, \varepsilon, \omega} \in \ANNs$, $\omega \in \Omega$, $\varepsilon \in (0, 1]$, $d, N \in \N$,
which satisfy for all  $N, d \in \N$, $\varepsilon \in (0, 1]$, $\omega \in \Omega$, $x \in \R^d$  that $\mathcal{R}(\Psi^{N, d, \varepsilon, \omega}) \in C(\R^d, \R)$, $\paramANN (\Psi^{N, d, \varepsilon, \omega}) \leq \Constant N^{\ExponN_2} (N^{\ExponN_1+1} \allowbreak d^{\ExponDim_3} \varepsilon^{-\ExponError} + \Constant d^{\ExponDim_3}+  \Constant N^{\ExponN_1+1} d^{\ExponDim_3} \varepsilon^{-\ExponError} ) $, and
\begin{align*}
\label{eq:DNN:last}
&(\mathcal{R} (\Psi^{N, d, \varepsilon, \omega}))(x)\\
 &= (\mathcal{R} (\mathbf{g}^{M_N, d}_{\varepsilon})\big((\mathcal{R} (\Phi^{N, d, 1, \varepsilon, \omega}_{N}))(x), (\mathcal{R} (\Phi^{N, d, 2, \varepsilon, \omega}_{N}))(x), \ldots, (\mathcal{R} (\Phi^{N, d, M_N, \varepsilon, \omega}_{N}))(x)\big)\\
& = (\mathcal{R} (\mathbf{g}^{M_N, d}_{\varepsilon})) \big( X^{N, d, 1, x, \varepsilon}_{N}(\omega), X^{N, d, 2, x, \varepsilon}_{N}(\omega), \ldots, X^{N, d, M_N, x, \varepsilon}_{N}(\omega)\big)\\
& = (\mathcal{R} (\mathbf{g}^{M_N, d}_{\varepsilon}))(X^{N, d, x, \varepsilon}_{N}(\omega)). \numberthis
\end{align*}
Hence, we obtain that   for all $N, d \in \N$, $\varepsilon \in (0, 1]$, $\omega \in \Omega$  it holds that
\begin{align}
\begin{split}
\paramANN (\Psi^{N, d, \varepsilon, \omega}) &\leq \Constant^2 N^{\ExponN_2} d^{\ExponDim_3} \varepsilon^{-\ExponError}(2N^{\ExponN_1+1} +1  )\\
& \leq 3 \Constant^2 N^{\ExponN_1+ \ExponN_2+1} d^{\ExponDim_3} \varepsilon^{-\ExponError}.
\end{split}
\end{align}
This and \eqref{eq:N:varepsilon}  demonstrate  that for all $d \in \N$, $\varepsilon \in (0, 1]$ it holds that
\begin{align}
\begin{split}
&\paramANN (\Psi^{\mathcal{N}_{d, \varepsilon}, d, \mathcal{E}_{d, \varepsilon}, \mathfrak{w}_{d, \varepsilon}})\\
& \leq 3 \Constant^2 2^{ \ExponN_1+\ExponN_2 +1} \big(\tfrac{2\Constant d^{\ExponDim_0}}{\varepsilon}\big)^{\frac{( \ExponN_1+\ExponN_2 +1)}{\ExponN_0}}  d^{ \ExponDim_3} \varepsilon^{-\ExponError} \gamma^{ \ExponError} d^{ \ExponError \delta}\\
& \leq  3 \Constant^2 2^{\ExponN_1+\ExponN_2 + 1} (2 \Constant)^{\frac{ \ExponN_1+\ExponN_2 +1}{\ExponN_0}}
\gamma^{ \ExponError} d^{\frac{\ExponDim_0( \ExponN_1+\ExponN_2 +1)}{\ExponN_0} +  \ExponDim_3 +   \ExponError \delta}  \varepsilon^{-\frac{( \ExponN_1+ \ExponN_2 +1)}{\ExponN_0} -\ExponError}.  
\end{split}
\end{align}
Combining this, \eqref{eq:error:last}, and \eqref{eq:DNN:last} establishes \eqref{eq:prop:statement}. The proof of Theorem~\ref{thm:main} is thus completed.
\end{proof}

\section{Artificial neural network (ANN) calculus}
\label{sec:calculus}

In this section we establish in Lemma~\ref{lem:sum:ANN} and Lemma~\ref{lem:Composition_Sum} below a few elementary results on representation flexibilities of ANNs. In our proofs of Lemma~\ref{lem:sum:ANN} and Lemma~\ref{lem:Composition_Sum} we use results from a certain ANN calculus which we recall and extend in Subsections~\ref{subsec:ANNs}--\ref{subsec:sums}.  In particular,
 Definition~\ref{Def:ANN} below is \cite[Definitions~2.1]{GrohsHornungJentzen2019},
 Definition~\ref{Def:multidim_version} below is \cite[Definitions~2.2]{GrohsHornungJentzen2019}, 
 Definition~\ref{Definition:ANNrealization} below is \cite[Definitions~2.3]{GrohsHornungJentzen2019}, 
  Definition~\ref{Definition:ANNcomposition} below is \cite[Definitions~2.5]{GrohsHornungJentzen2019}, 
  and
   Definition~\ref{Definition:simpleParallelization} below is \cite[Definitions~2.17]{GrohsHornungJentzen2019}.

\subsection{ANNs}
\label{subsec:ANNs}

\begin{definition}[ANNs]
	\label{Def:ANN}
	We denote by $\ANNs$ the set given by 
	\begin{equation}
	\begin{split}
	\ANNs
	&=
	\cup_{L \in \N}
	\cup_{ (l_0,l_1,\ldots, l_L) \in \N^{L+1} }
	\left(
	\times_{k = 1}^L (\R^{l_k \times l_{k-1}} \times \R^{l_k})
	\right)
	\end{split}
	\end{equation}
	and we denote by 	$
	\paramANN, 
	\lengthANN,  \inDimANN, \outDimANN \colon \ANNs \to \N
	$, $\hiddenLength \colon \ANNs \to \N_0$, 
	and
	$\dims\colon\ANNs\to  \cup_{L=2}^\infty\, \N^{L}$
	the functions which satisfy
	for all $ L\in\N$, $l_0,l_1,\ldots, l_L \in \N$, 
	$
	\Phi 
	\in  \allowbreak
	( \times_{k = 1}^L(\R^{l_k \times l_{k-1}} \times \R^{l_k}))$
	that
	$\paramANN(\Phi)
	=
	\sum_{k = 1}^L l_k(l_{k-1} + 1) 
	$, $\lengthANN(\Phi)=L$,  $\inDimANN(\Phi)=l_0$,  $\outDimANN(\Phi)=l_L$, $\hiddenLength(\Phi)=L-1$, and $\dims(\Phi)= (l_0,l_1,\ldots, l_L)$.
\end{definition}

\subsection{Realizations of ANNs}
\label{subsec:realizations}

\begin{definition}[Multidimensional versions]
	\label{Def:multidim_version}
	Let $d \in \N$ and let $\psi \colon \R \to \R$ be a function.
	Then we denote by $\mathfrak{M}_{\psi, d} \colon \R^d \to \R^d$ the function which satisfies for all $ x = ( x_1, \dots, x_{d} ) \in \R^{d} $ that
	\begin{equation}\label{multidim_version:Equation}
	\mathfrak{M}_{\psi, d}( x ) 
	=
	\left(
	\psi(x_1)
	,
	\ldots
	,
	\psi(x_d)
	\right).
	\end{equation}
\end{definition}

\begin{definition}[Realizations associated to ANNs]
	\label{Definition:ANNrealization}
	Let $a\in C(\R,\R)$.
	Then we denote by 
	$
	\functionANN \colon \ANNs \to \cup_{k,l\in\N}\,C(\R^k,\R^l)
	$
	the function which satisfies
	for all  $ L\in\N$, $l_0,l_1,\ldots, l_L \in \N$, 
	$
	\Phi 
	=
	((W_1, B_1),(W_2, B_2),\allowbreak \ldots, (W_L,\allowbreak B_L))
	\in  \allowbreak
	( \times_{k = 1}^L\allowbreak(\R^{l_k \times l_{k-1}} \times \R^{l_k}))
	$,
	$x_0 \in \R^{l_0}, x_1 \in \R^{l_1}, \ldots, x_{L-1} \in \R^{l_{L-1}}$ 
	with $\forall \, k \in \N \cap (0,L) \colon x_k =\activationDim{l_k}(W_k x_{k-1} + B_k)$  
	that
	\begin{equation}
	\label{setting_NN:ass2}
	\functionANN(\Phi) \in C(\R^{l_0},\R^{l_L})\qandq
	( \functionANN(\Phi) ) (x_0) = W_L x_{L-1} + B_L
	\end{equation}
	(cf.~Definition~\ref{Def:ANN} and Definition~\ref{Def:multidim_version}).
\end{definition}

\subsection{Compositions of ANNs}
\label{subsec:compositions}

\begin{definition}[Compositions of ANNs]
	\label{Definition:ANNcomposition}
	We denote by $\compANN{(\cdot)}{(\cdot)}\colon \{(\Phi_1,\Phi_2)\allowbreak\in\ANNs\times \ANNs\colon \inDimANN(\Phi_1)=\outDimANN(\Phi_2)\}\to\ANNs$ the function which satisfies for all 
	$ L,\mathscr{L}\in\N$, $l_0,l_1,\ldots, l_L,\allowbreak \mathfrak{l}_0,\mathfrak{l}_1,\ldots, \mathfrak{l}_\mathscr{L} \in \N$, 
	$
	\Phi_1
	=
	((W_1, B_1),(W_2, B_2),\allowbreak \ldots, (W_L,\allowbreak B_L))
	\in  \allowbreak
	( \times_{k = 1}^L(\R^{l_k \times l_{k-1}} \times \R^{l_k}))
	$,
	$
	\Phi_2
	=
	((\mathscr{W}_1, \mathscr{B}_1),\allowbreak(\mathscr{W}_2, \mathscr{B}_2),\allowbreak \ldots, (\mathscr{W}_\mathscr{L},\allowbreak \mathscr{B}_\mathscr{L}))
	\in  \allowbreak
	( \times_{k = 1}^\mathscr{L}\allowbreak(\R^{\mathfrak{l}_k \times \mathfrak{l}_{k-1}} \times \R^{\mathfrak{l}_k}))
	$ 
	with $l_0=\inDimANN(\Phi_1)=\outDimANN(\Phi_2)=\mathfrak{l}_{\mathscr{L}}$
	that
	\begin{equation}\label{ANNoperations:Composition}
	\begin{split}
	&\compANN{\Phi_1}{\Phi_2}=\\&
	\begin{cases} 
	\begin{array}{r}
	\big((\mathscr{W}_1, \mathscr{B}_1),(\mathscr{W}_2, \mathscr{B}_2),\ldots, (\mathscr{W}_{\mathscr{L}-1},\allowbreak \mathscr{B}_{\mathscr{L}-1}),
	(W_1 \mathscr{W}_{\mathscr{L}}, W_1 \mathscr{B}_{\mathscr{L}}+B_{1}),\\ (W_2, B_2), (W_3, B_3),\ldots,(W_{L},\allowbreak B_{L})\big)
	\end{array}
	&: L>1<\mathscr{L} \\[3ex]
	\big( (W_1 \mathscr{W}_{1}, W_1 \mathscr{B}_1+B_{1}), (W_2, B_2), (W_3, B_3),\ldots,(W_{L},\allowbreak B_{L}) \big)
	&: L>1=\mathscr{L}\\[1ex]
	\big((\mathscr{W}_1, \mathscr{B}_1),(\mathscr{W}_2, \mathscr{B}_2),\allowbreak \ldots, (\mathscr{W}_{\mathscr{L}-1},\allowbreak \mathscr{B}_{\mathscr{L}-1}),(W_1 \mathscr{W}_{\mathscr{L}}, W_1 \mathscr{B}_{\mathscr{L}}+B_{1}) \big)
	&: L=1<\mathscr{L}  \\[1ex]
	(W_1 \mathscr{W}_{1}, W_1 \mathscr{B}_1+B_{1}) 
	&: L=1=\mathscr{L} 
	\end{cases}
	\end{split}
	\end{equation}
	(cf.\ Definition~\ref{Def:ANN}).
\end{definition}

\subsection{Parallelizations of ANNs with the same length}
\label{subsec:parallel}

\begin{definition}[Parallelizations of ANNs with the same length]
	\label{Definition:simpleParallelization}
	Let $n\in\N$. Then we denote by 
	\begin{equation}
	\parallelizationSpecial_{n}\colon \big\{(\Phi_1,\Phi_2,\dots, \Phi_n)\in\ANNs^n\colon \lengthANN(\Phi_1)= \lengthANN(\Phi_2)=\ldots =\lengthANN(\Phi_n) \big\}\to \ANNs
	\end{equation}
	the function which satisfies for all  $L\in\N$,
	$(l_{1,0},l_{1,1},\dots, l_{1,L}), (l_{2,0},l_{2,1},\dots, l_{2,L}),\dots,\allowbreak (l_{n,0},\allowbreak l_{n,1},\allowbreak\dots, l_{n,L})\in\N^{L+1}$, 
	$\Phi_1=((W_{1,1}, B_{1,1}),(W_{1,2}, B_{1,2}),\allowbreak \ldots, (W_{1,L},\allowbreak B_{1,L}))\in ( \times_{k = 1}^L\allowbreak(\R^{l_{1,k} \times l_{1,k-1}} \times \R^{l_{1,k}}))$, 
	$\Phi_2=((W_{2,1}, B_{2,1}),(W_{2,2}, B_{2,2}),\allowbreak \ldots, (W_{2,L},\allowbreak B_{2,L}))\in ( \times_{k = 1}^L\allowbreak(\R^{l_{2,k} \times l_{2,k-1}} \times \R^{l_{2,k}}))$,
	\dots,  
	$\Phi_n=((W_{n,1}, B_{n,1}),(W_{n,2}, B_{n,2}),\allowbreak \ldots, (W_{n,L},\allowbreak B_{n,L}))\in ( \times_{k = 1}^L\allowbreak(\R^{l_{n,k} \times l_{n,k-1}} \times \R^{l_{n,k}}))$
	%
	%
	that
	\begin{equation}\label{parallelisationSameLengthDef}
	\begin{split}
	\parallelizationSpecial_{n}(\Phi_1,\Phi_2,\dots,\Phi_n)&=
	\left(
	\pa{\begin{pmatrix}
		W_{1,1}& 0& 0& \cdots& 0\\
		0& W_{2,1}& 0&\cdots& 0\\
		0& 0& W_{3,1}&\cdots& 0\\
		\vdots& \vdots&\vdots& \ddots& \vdots\\
		0& 0& 0&\cdots& W_{n,1}
		\end{pmatrix} ,\begin{pmatrix}B_{1,1}\\B_{2,1}\\B_{3,1}\\\vdots\\ B_{n,1}\end{pmatrix}},\right.
	\\&\quad
	\pa{\begin{pmatrix}
		W_{1,2}& 0& 0& \cdots& 0\\
		0& W_{2,2}& 0&\cdots& 0\\
		0& 0& W_{3,2}&\cdots& 0\\
		\vdots& \vdots&\vdots& \ddots& \vdots\\
		0& 0& 0&\cdots& W_{n,2}
		\end{pmatrix} ,\begin{pmatrix}B_{1,2}\\B_{2,2}\\B_{3,2}\\\vdots\\ B_{n,2}\end{pmatrix}}
	,\dots,
	\\&\quad\left.
	\pa{\begin{pmatrix}
		W_{1,L}& 0& 0& \cdots& 0\\
		0& W_{2,L}& 0&\cdots& 0\\
		0& 0& W_{3,L}&\cdots& 0\\
		\vdots& \vdots&\vdots& \ddots& \vdots\\
		0& 0& 0&\cdots& W_{n,L}
		\end{pmatrix} ,\begin{pmatrix}B_{1,L}\\B_{2,L}\\B_{3,L}\\\vdots\\ B_{n,L}\end{pmatrix}}\right)
	\end{split}
	\end{equation}
	(cf.\ Definition~\ref{Def:ANN}).
\end{definition}

\subsection{Linear transformations of ANNs}
\label{subsec:linear}

\begin{definition}[Identity matrix] \label{Definition:identityMatrix}
	Let $n\in\N$. Then we denote by $\idMatrix_{n}\in \R^{n\times n}$ the identity matrix in $\R^{n\times n}$.
\end{definition}

\begin{definition}[ANNs with a vector input]
\label{Definition:ANN:vector}
Let $n \in \N$, $B \in \R^n$. Then we denote by $\vectorANN_B \in (\R^{n \times n} \times \R^n)$ the pair given by $\vectorANN_B = (\idMatrix_n, B)$ (cf.~Definition~\ref{Definition:identityMatrix}).
\end{definition}

\begin{lemma}
	\label{lem:ANN:vector}
Let $n \in \N$, $B \in \R^n$. Then
\begin{enumerate}[(i)]
	\item\label{item:ANN:vector:1}  it holds that $\vectorANN_B \in \ANNs$,
	\item\label{item:ANN:vector:2} it holds that $\dims(\vectorANN_{B}) = (n, n) \in \N^2$,
	\item\label{item:ANN:vector:3.1} it holds for all $a \in C(\R, \R)$ that $\functionANN (\vectorANN_{B}) \in C(\R^n, \R^n)$, and
		\item\label{item:ANN:vector:3.2} it holds for all $a \in C(\R, \R)$, $x \in \R^n$ that 
	\begin{equation}
	(\functionANN (\vectorANN_{B})) (x) = x + B
	\end{equation}
\end{enumerate}
(cf.~Definition~\ref{Def:ANN}, Definition~\ref{Definition:ANNrealization}, and 
Definition~\ref{Definition:ANN:vector}).
\end{lemma}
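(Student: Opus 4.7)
The plan is to verify each of the four items by directly unpacking the definitions, since this lemma is essentially a bookkeeping check that the one-layer construction $\vectorANN_B = (\idMatrix_n, B)$ really fits the ANN framework and realizes the translation $x \mapsto x + B$.

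For item~\eqref{item:ANN:vector:1}, I would choose $L = 1$ and $(l_0, l_1) = (n, n) \in \N^{L+1} = \N^2$ in the union defining $\ANNs$ (Definition~\ref{Def:ANN}). For these choices one has $\times_{k=1}^L (\R^{l_k \times l_{k-1}} \times \R^{l_k}) = \R^{n \times n} \times \R^n$. Since $\idMatrix_n \in \R^{n \times n}$ (cf.~Definition~\ref{Definition:identityMatrix}) and $B \in \R^n$ by hypothesis, the pair $(\idMatrix_n, B)$ belongs to this Cartesian product, and hence $\vectorANN_B \in \ANNs$. Item~\eqref{item:ANN:vector:2} follows by reading off the tuple $(l_0, l_1, \ldots, l_L)$ associated with this representation, which is $(n, n)$, so $\dims(\vectorANN_B) = (n, n) \in \N^2$.

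For items~\eqref{item:ANN:vector:3.1} and~\eqref{item:ANN:vector:3.2}, I would apply Definition~\ref{Definition:ANNrealization} with $L = 1$, $l_0 = l_1 = n$, $W_1 = \idMatrix_n$, $B_1 = B$. The crucial observation is that when $L = 1$ the condition $\forall\,k \in \N \cap (0, L) \colon x_k = \activationDim{l_k}(W_k x_{k-1} + B_k)$ is vacuous (the index set is empty), so no activation is applied and one has for every $a \in C(\R, \R)$ and every $x = x_0 \in \R^n$ that $\functionANN(\vectorANN_B) \in C(\R^{l_0}, \R^{l_L}) = C(\R^n, \R^n)$ and
\begin{equation}
(\functionANN(\vectorANN_B))(x) = W_L x_{L-1} + B_L = \idMatrix_n\, x + B = x + B,
\end{equation}
which establishes both remaining items.

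Since the argument is pure definition chasing, there is no real obstacle; the only point that deserves care is the vacuous hidden-layer condition in Definition~\ref{Definition:ANNrealization} when $L = 1$, which must be invoked to avoid having to apply the multidimensional activation $\activationDim{l_1}$ and hence to get the clean identity realization independent of $a$.
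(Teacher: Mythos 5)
Your proof is correct and follows essentially the same route as the paper's: both unpack Definitions~\ref{Def:ANN}, \ref{Definition:ANNrealization}, and~\ref{Definition:ANN:vector} directly, observing that $\vectorANN_B = (\idMatrix_n, B) \in \R^{n \times n} \times \R^n$ yields items~\eqref{item:ANN:vector:1}--\eqref{item:ANN:vector:2} and that the realization formula \eqref{setting_NN:ass2} with $L=1$ gives items~\eqref{item:ANN:vector:3.1}--\eqref{item:ANN:vector:3.2}. You spell out the vacuity of the hidden-layer condition more explicitly than the paper does, but the substance is identical.
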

\begin{proof}[Proof of Lemma~\ref{lem:ANN:vector}]
Note that the fact that $\vectorANN_B \in (\R^{n \times n} \times \R^n)$ ensures that $\vectorANN_B \in \ANNs$ and $	\dims(\vectorANN_{B}) = (n, n) \in \N^2$. This establishes items~\eqref{item:ANN:vector:1}--\eqref{item:ANN:vector:2}.
The fact that $ \vectorANN_B = (\idMatrix_n, B) $ (cf.~Definition~\ref{Definition:identityMatrix})  and \eqref{setting_NN:ass2} therefore prove that for all $a \in C(\R, \R)$, $x \in \R^n$ it holds that $\functionANN (\vectorANN_{B}) \in C(\R^n, \R^n)$ and
\begin{equation}
(\functionANN (\vectorANN_{B})) (x) = x + B.
\end{equation}
This establishes items~\eqref{item:ANN:vector:3.1}--\eqref{item:ANN:vector:3.2}. 
The proof of Lemma~\ref{lem:ANN:vector} is thus completed.
\end{proof}

\begin{lemma}
	\label{lem:ANN:vector:comp}
	Let $\Phi \in \ANNs$ (cf.~Definition~\ref{Def:ANN}). Then
	\begin{enumerate}[(i)]
		\item\label{item:ANN:vector:comp:1}
		it holds for all $B \in \R^{\outDimANN(\Phi)} $ that 		$\dims (\compANN{\vectorANN_B}{\Phi}) = \dims (\Phi)$,
		\item\label{item:ANN:vector:comp:2}
			it holds for all $B \in \R^{\outDimANN(\Phi)}$, $a \in C(\R, \R)$  that $\functionANN(\compANN{\vectorANN_B}{\Phi}) \in C(\R^{\inDimANN(\Phi)}, \R^{\outDimANN(\Phi)}) $,
		\item\label{item:ANN:vector:comp:3}
		 it holds for all $B \in \R^{\outDimANN(\Phi)} $, $a \in C(\R, \R)$, $x \in \R^{\inDimANN(\Phi)}$  that
		\begin{equation}
		(\functionANN(\compANN{\vectorANN_B}{\Phi}))(x) = (\functionANN(\Phi))(x) + B,
		\end{equation}
		\item\label{item:ANN:vector:comp:4}
		it holds for all $B \in \R^{\inDimANN(\Phi)} $ that
		$\dims(\compANN{\Phi}{\vectorANN_B}) = \dims(\Phi)$,
		\item\label{item:ANN:vector:comp:5}
		it holds for all $B \in \R^{\inDimANN(\Phi)} $, $a \in C(\R, \R)$ that $\functionANN(\compANN{\Phi}{\vectorANN_B}) \in C(\R^{\inDimANN(\Phi)}, \R^{\outDimANN(\Phi)}) $, and
		\item\label{item:ANN:vector:comp:6}
 it holds for all $B \in \R^{\inDimANN(\Phi)} $, $a \in C(\R, \R)$, $x \in \R^{\outDimANN(\Phi)}$    that
		\begin{equation}
		(\functionANN(\compANN{\Phi}{\vectorANN_B}))(x) = (\functionANN(\Phi))(x+B)
		\end{equation}
	\end{enumerate}
	(cf.~Definition~\ref{Definition:ANNrealization},
	Definition~\ref{Definition:ANNcomposition}, and 
	Definition~\ref{Definition:ANN:vector}).
\end{lemma}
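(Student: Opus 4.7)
My strategy is to unfold Definition~\ref{Definition:ANNcomposition} with one of the arguments taken to be $\vectorANN_B = (\idMatrix_n, B)$, which is a single-layer ANN with $\lengthANN(\vectorANN_B) = 1$, and then verify the realization identities by substituting the resulting weight-bias tuples into Definition~\ref{Definition:ANNrealization}. The whole argument reduces to a case split on whether $\lengthANN(\Phi) = 1$ or $\lengthANN(\Phi) > 1$; in each sub-case formula~\eqref{ANNoperations:Composition} collapses because multiplication by $\idMatrix_n$ is a no-op.

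For items~\eqref{item:ANN:vector:comp:1}--\eqref{item:ANN:vector:comp:3} I set $n = \outDimANN(\Phi)$ and treat $\vectorANN_B$ as ``$\Phi_1$'' and $\Phi$ as ``$\Phi_2$'' in Definition~\ref{Definition:ANNcomposition}, so the relevant cases of~\eqref{ANNoperations:Composition} are ``$L = 1 < \mathscr{L}$'' and ``$L = 1 = \mathscr{L}$''. Writing $\Phi = ((\mathscr{W}_1, \mathscr{B}_1), \dots, (\mathscr{W}_\mathscr{L}, \mathscr{B}_\mathscr{L}))$ and substituting $W_1 = \idMatrix_n$, $B_1 = B$, direct inspection shows that $\compANN{\vectorANN_B}{\Phi}$ equals $\Phi$ with only its final bias replaced by $\mathscr{B}_\mathscr{L} + B$. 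Hence $\dims(\compANN{\vectorANN_B}{\Phi}) = \dims(\Phi)$, giving~\eqref{item:ANN:vector:comp:1}; applying the forward-pass identity~\eqref{setting_NN:ass2} then yields $(\functionANN(\compANN{\vectorANN_B}{\Phi}))(x) = (\functionANN(\Phi))(x) + B$, which gives~\eqref{item:ANN:vector:comp:2}--\eqref{item:ANN:vector:comp:3}.

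For items~\eqref{item:ANN:vector:comp:4}--\eqref{item:ANN:vector:comp:6} I set $n = \inDimANN(\Phi)$ and now $\vectorANN_B$ plays the role of ``$\Phi_2$'' with $\mathscr{L} = 1$, so the relevant cases of~\eqref{ANNoperations:Composition} are ``$L > 1 = \mathscr{L}$'' and ``$L = 1 = \mathscr{L}$''. Writing $\Phi = ((W_1, B_1), \dots, (W_L, B_L))$ and substituting $\mathscr{W}_1 = \idMatrix_n$, $\mathscr{B}_1 = B$ gives $\compANN{\Phi}{\vectorANN_B} = ((W_1, W_1 B + B_1), (W_2, B_2), \dots, (W_L, B_L))$, establishing~\eqref{item:ANN:vector:comp:4}. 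To conclude~\eqref{item:ANN:vector:comp:5}--\eqref{item:ANN:vector:comp:6} I note that at the first hidden layer of $\compANN{\Phi}{\vectorANN_B}$ the preactivation equals $W_1 x + (W_1 B + B_1) = W_1 (x + B) + B_1$, which coincides with the first-layer preactivation of $\Phi$ evaluated at the shifted input $x + B$; since all deeper layers are identical to those of $\Phi$, Definition~\ref{Definition:ANNrealization} yields $(\functionANN(\compANN{\Phi}{\vectorANN_B}))(x) = (\functionANN(\Phi))(x + B)$.

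Nothing in this argument is genuinely subtle, so there is no real obstacle; the only mild nuisance is keeping the four sub-cases of~\eqref{ANNoperations:Composition} straight and handling the degenerate case $\lengthANN(\Phi) = 1$ separately from $\lengthANN(\Phi) > 1$, so as not to lose track of whether the affected layer is hidden or is the output. Because $\vectorANN_B$ carries no activation function at its ``seam'' with $\Phi$, the additive shift $B$ simply propagates either into the final bias (left composition) or into the first-layer preactivation (right composition), and neither manipulation interacts with the nonlinearity~$a$.
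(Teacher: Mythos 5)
Your proof is correct. The key points you need are all verified: when $\vectorANN_B = (\idMatrix_n, B)$ is used as the outer factor ($L = 1$), formula~\eqref{ANNoperations:Composition} simply replaces the final bias $\mathscr{B}_{\mathscr{L}}$ of $\Phi$ by $\mathscr{B}_{\mathscr{L}} + B$ while leaving every other pair and every dimension untouched; when it is used as the inner factor ($\mathscr{L} = 1$), the first pair of $\Phi$ becomes $(W_1, W_1 B + B_1)$, whose preactivation $W_1 x + W_1 B + B_1 = W_1(x+B) + B_1$ is exactly that of $\Phi$ at the shifted input $x + B$, and again the architecture is unchanged. All four sub-cases of~\eqref{ANNoperations:Composition} that can arise are handled, and the realization identities follow directly from~\eqref{setting_NN:ass2}.

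Your route does differ from the paper's. The paper first records (Lemma~\ref{lem:ANN:vector}) that $\dims(\vectorANN_B) = (n,n)$ and $(\functionANN(\vectorANN_B))(x) = x + B$, and then delegates all of the composition bookkeeping to an external citation, \cite[Proposition~2.6]{GrohsHornungJentzen2019}, which is a general lemma about how $\dims$, continuity, and realizations behave under $\bullet$. You instead unfold Definition~\ref{Definition:ANNcomposition} directly, case-split on the lengths, and observe that multiplication by $\idMatrix_n$ collapses the formulas. Your argument is more self-contained and more transparent about \emph{why} the dimensions and realizations behave as claimed (the seam carries no activation, so the additive shift simply absorbs into one bias vector); the paper's version is shorter because it reuses a general-purpose composition lemma that it also needs elsewhere. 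Both are valid; if you wanted to match the paper's economy, you could observe that your explicit case-analysis is precisely the content of the cited Proposition~2.6 specialised to a length-one factor. One peripheral remark: item~\eqref{item:ANN:vector:comp:6} of the statement writes $x \in \R^{\outDimANN(\Phi)}$, but for $(\functionANN(\compANN{\Phi}{\vectorANN_B}))(x)$ to be defined one needs $x \in \R^{\inDimANN(\Phi)}$; you implicitly used the correct domain, which is what the statement must mean.
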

\begin{proof}[Proof of Lemma~\ref{lem:ANN:vector:comp}]
Note that Lemma~\ref{lem:ANN:vector} demonstrates that for all $n \in \N$, $B \in \R^n$, $a \in C(\R, \R)$, $x \in \R^n$ it holds that $\dims(\vectorANN_{B}) = (n, n)$, $\functionANN (\vectorANN_{B}) \in C(\R^n, \R^n)$, and
\begin{equation}
(\functionANN (\vectorANN_{B})) (x) = x + B.
\end{equation}
Combining this and, e.g., \cite[Proposition~2.6]{GrohsHornungJentzen2019} establishes items \eqref{item:ANN:vector:comp:1}--\eqref{item:ANN:vector:comp:6}.
	The proof of Lemma~\ref{lem:ANN:vector:comp} is thus completed.
\end{proof}

\begin{definition}[ANNs with a matrix input]
	\label{Definition:ANN:matrix}
Let $m, n \in \N$, $W \in \R^{m \times n}$. Then 
we denote by $\matrixANN_{W} \in (\R^{m \times n} \times \R^{m})$ the pair 
given by $\matrixANN_{W} = (W, 0)$.
\end{definition}

\begin{lemma}
\label{lem:ANN:matrix}
Let $m, n \in \N$, $W \in \R^{m \times n}$. Then
\begin{enumerate}[(i)]
	\item\label{item:ANN:matrix:1} it holds that $\matrixANN_W \in \ANNs$,
	\item\label{item:ANN:matrix:2} it holds that $\dims(\matrixANN_{W}) = (n, m) \in \N^2$,
	\item\label{item:ANN:matrix:3} it holds for all $a \in C(\R, \R)$ that $\functionANN(\matrixANN_{W}) \in C(\R^n, \R^m)$, and
		\item\label{item:ANN:matrix:4} it holds for all $a \in C(\R, \R)$, $x \in \R^n$ that 
	\begin{equation}
	(\functionANN(\matrixANN_{W})) (x) = Wx
	\end{equation}
\end{enumerate}
(cf.~Definition~\ref{Def:ANN}, Definition~\ref{Definition:ANNrealization}, and 
Definition~\ref{Definition:ANN:matrix}).
\end{lemma}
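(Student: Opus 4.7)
The plan is to verify all four items by direct unfolding of the definitions, following exactly the pattern of the proof of Lemma~\ref{lem:ANN:vector}. The key observation is that $\matrixANN_W=(W,0)$ is, by construction, an element of $\R^{m\times n}\times\R^{m}$, which corresponds to the case $L=1$, $l_0=n$, $l_1=m$ in Definition~\ref{Def:ANN}, so it sits inside $\times_{k=1}^{1}(\R^{l_k\times l_{k-1}}\times\R^{l_k})\subseteq\ANNs$. This immediately gives item~\eqref{item:ANN:matrix:1}, and reading off $(l_0,l_1)=(n,m)$ from the same correspondence yields item~\eqref{item:ANN:matrix:2}.

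For items~\eqref{item:ANN:matrix:3}--\eqref{item:ANN:matrix:4}, I would apply Definition~\ref{Definition:ANNrealization} with $L=1$, $W_1=W$, $B_1=0\in\R^m$, and input $x_0=x\in\R^{l_0}=\R^n$. Since $L=1$ there are no hidden layers, so the condition ``$\forall\,k\in\N\cap(0,L)\colon x_k=\activationDim{l_k}(W_k x_{k-1}+B_k)$'' is vacuous, and the formula \eqref{setting_NN:ass2} collapses to
\begin{equation}
\functionANN(\matrixANN_W)\in C(\R^n,\R^m)\qandq (\functionANN(\matrixANN_W))(x)=W_L x_{L-1}+B_L=Wx+0=Wx,
\end{equation}
for every $a\in C(\R,\R)$ and every $x\in\R^n$. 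This establishes items~\eqref{item:ANN:matrix:3}--\eqref{item:ANN:matrix:4}.

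There is no substantive obstacle here; the statement is simply the specialization of Definition~\ref{Definition:ANNrealization} to a one-layer ANN with zero bias, and the proof is a short bookkeeping argument analogous to Lemma~\ref{lem:ANN:vector}. The only thing to be careful about is the convention that a single-layer network has no hidden activation applied, which is built into \eqref{setting_NN:ass2} through the vacuous quantifier on $k\in\N\cap(0,L)$ when $L=1$.
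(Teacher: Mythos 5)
Your proof is correct and follows essentially the same approach as the paper's: both arguments observe that $\matrixANN_W=(W,0)\in\R^{m\times n}\times\R^m$ is a one-layer ANN, read off $\dims(\matrixANN_W)=(n,m)$ directly, and then evaluate the realization via \eqref{setting_NN:ass2}. You simply spell out the $L=1$ bookkeeping a bit more explicitly than the paper does, which is fine.
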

\begin{proof}[Proof of Lemma~\ref{lem:ANN:matrix}]
Note that the fact that $\matrixANN_{W} \in (\R^{m \times n} \times \R^{m})$ ensures that $\matrixANN_W \in \ANNs$ and $	\dims(\matrixANN_{W}) = (n, m) \in \N^2$. This establishes items~\eqref{item:ANN:matrix:1}--\eqref{item:ANN:matrix:2}.
Next observe that the fact that $\matrixANN_W = (W, 0)  \in (\R^{m \times n} \times \R^{m})$ and \eqref{setting_NN:ass2} prove that for all $a \in C(\R, \R)$, $x \in \R^n$ it holds that $\functionANN(\matrixANN_{W}) \in C(\R^n, \R^m)$ and
\begin{equation}
(\functionANN(\matrixANN_{W})) (x) = Wx.
\end{equation}
This establishes items~\eqref{item:ANN:matrix:3}--\eqref{item:ANN:matrix:4}.
The proof of Lemma~\ref{lem:ANN:matrix} is thus completed.
\end{proof}

\begin{lemma}
	\label{lem:ANN:matrix:comp}
	Let  $a \in C(\R, \R)$, $\Phi \in \ANNs$ (cf.~Definition~\ref{Def:ANN}). Then
	\begin{enumerate}[(i)]
		\item\label{item:ANN:matrix:comp:1} it holds 
		for all $m \in \N$, $W \in \R^{m \times \outDimANN (\Phi)}$ that $\functionANN(\compANN{\matrixANN_W}{\Phi}) \in C(\R^{\inDimANN(\Phi)}, \R^m) $,
		\item\label{item:ANN:matrix:comp:2}  it holds 
		for all $m \in \N$, $W \in \R^{m \times \outDimANN (\Phi)}$, $x \in \R^{\inDimANN(\Phi)}$ that
			\begin{equation}
		(\functionANN(\compANN{\matrixANN_W}{\Phi}))(x) = W \big((\functionANN(\Phi))(x)\big),
		\end{equation}
		\item\label{item:ANN:matrix:comp:3} 
		it holds for all
		$n \in \N$, $W \in \R^{ \inDimANN (\Phi) \times n}$ that $\functionANN(\compANN{\Phi}{\matrixANN_W}) \in C(\R^n, \R^{\outDimANN(\Phi)}) $,  and
		\item\label{item:ANN:matrix:comp:4} it holds for all 	$n \in \N$, $W \in \R^{ \inDimANN (\Phi) \times n}$, $x \in \R^n$ that  
		\begin{equation}
		(\functionANN(\compANN{\Phi}{\matrixANN_W}))(x) = (\functionANN(\Phi))(Wx)
		\end{equation}
	\end{enumerate}
	(cf.~Definition~\ref{Definition:ANNrealization},
	Definition~\ref{Definition:ANNcomposition}, and 
	Definition~\ref{Definition:ANN:matrix}).
\end{lemma}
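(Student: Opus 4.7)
The plan is to mirror the strategy used in the proof of Lemma~\ref{lem:ANN:vector:comp}, which reduces the statement about compositions with $\vectorANN_B$ to the elementary facts in Lemma~\ref{lem:ANN:vector} about $\vectorANN_B$ itself combined with the general composition result cited as \cite[Proposition~2.6]{GrohsHornungJentzen2019}. Here, the role of $\vectorANN_B$ will be played by $\matrixANN_W$, and the role of Lemma~\ref{lem:ANN:vector} will be played by Lemma~\ref{lem:ANN:matrix}.

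First I would invoke Lemma~\ref{lem:ANN:matrix} to record that for every $m, n \in \N$ and every $W \in \R^{m \times n}$ it holds that $\matrixANN_W \in \ANNs$ with $\dims(\matrixANN_W) = (n, m)$, that $\functionANN(\matrixANN_W) \in C(\R^n, \R^m)$, and that $(\functionANN(\matrixANN_W))(x) = W x$ for all $x \in \R^n$. This gives, in particular, $\inDimANN(\matrixANN_W) = n$ and $\outDimANN(\matrixANN_W) = m$, which is precisely the compatibility condition needed to form the compositions $\compANN{\matrixANN_W}{\Phi}$ (when $n = \outDimANN(\Phi)$) and $\compANN{\Phi}{\matrixANN_W}$ (when $m = \inDimANN(\Phi)$) in the sense of Definition~\ref{Definition:ANNcomposition}.

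Next I would apply the general composition identity \cite[Proposition~2.6]{GrohsHornungJentzen2019}, which asserts that for any two ANNs $\Phi_1, \Phi_2$ with $\inDimANN(\Phi_1) = \outDimANN(\Phi_2)$ and any $a \in C(\R, \R)$, the realization $\functionANN(\compANN{\Phi_1}{\Phi_2})$ belongs to $C(\R^{\inDimANN(\Phi_2)}, \R^{\outDimANN(\Phi_1)})$ and equals the composition $\functionANN(\Phi_1) \circ \functionANN(\Phi_2)$ as functions. Specializing this with $\Phi_1 = \matrixANN_W$ and $\Phi_2 = \Phi$ yields items~\eqref{item:ANN:matrix:comp:1}--\eqref{item:ANN:matrix:comp:2}, because by Lemma~\ref{lem:ANN:matrix} the outer realization acts as multiplication by $W$. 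Specializing instead with $\Phi_1 = \Phi$ and $\Phi_2 = \matrixANN_W$ yields items~\eqref{item:ANN:matrix:comp:3}--\eqref{item:ANN:matrix:comp:4}, since the inner realization now maps $x$ to $W x$ before $\functionANN(\Phi)$ is applied.

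There is no real obstacle here: the lemma is a purely bookkeeping statement that packages two special cases of the composition identity. The only point requiring a little care is to verify that the dimension annotations agree, i.e.\ that $\inDimANN(\matrixANN_W) = \outDimANN(\Phi)$ in the first case and $\outDimANN(\matrixANN_W) = \inDimANN(\Phi)$ in the second case, so that $\compANN{\matrixANN_W}{\Phi}$ and $\compANN{\Phi}{\matrixANN_W}$ are well-defined; both verifications are immediate from item~\eqref{item:ANN:matrix:2} of Lemma~\ref{lem:ANN:matrix}. The conclusion then follows, which completes the proof of Lemma~\ref{lem:ANN:matrix:comp}.
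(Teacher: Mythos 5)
Your argument is correct and matches the paper's proof of Lemma~\ref{lem:ANN:matrix:comp}: both invoke Lemma~\ref{lem:ANN:matrix} for the basic properties of $\matrixANN_W$ and then apply the general composition identity \cite[Proposition~2.6]{GrohsHornungJentzen2019} to deduce all four items. Your write-up is merely more explicit about the dimension compatibility and the two specializations, which the paper leaves implicit.
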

\begin{proof}[Proof of Lemma~\ref{lem:ANN:matrix:comp}]
Note that Lemma~\ref{lem:ANN:matrix} demonstrates that for all  $m, n \in \N$, $W \in \R^{m \times n}$, $x \in \R^n$ it holds that $\functionANN(\matrixANN_{W}) \in C(\R^n, \R^m)$ and
	\begin{equation}
	(\functionANN(\matrixANN_{W})) (x) = Wx.
	\end{equation}
 Combining this and, e.g., \cite[Proposition~2.6]{GrohsHornungJentzen2019} establishes items \eqref{item:ANN:matrix:comp:1}--\eqref{item:ANN:matrix:comp:4}.
	The proof of Lemma~\ref{lem:ANN:matrix:comp} is thus completed.
\end{proof}

\begin{definition}[Scalar multiplications of ANNs]
	\label{Definition:ANNscalar}
We denote by $\left(\cdot\right) \circledast \left(\cdot\right) \colon \R \times \ANNs \to \ANNs$ the function which satisfies for all $\lambda \in \R$, $\Phi \in \ANNs$ that
\begin{equation}
\lambda \circledast  \Phi = \compANN{\matrixANN_{\lambda \idMatrix_{\outDimANN(\Phi)}}}{\Phi}
\end{equation}
(cf.\ Definition~\ref{Def:ANN}, Definition~\ref{Definition:ANNcomposition}, Definition~\ref{Definition:identityMatrix},
and
Definition~\ref{Definition:ANN:matrix}).
\end{definition}

\begin{lemma}
\label{lem:ANNscalar}
Let $\lambda \in \R$, $\Phi \in \ANNs$ (cf.~Definition~\ref{Def:ANN}). Then
\begin{enumerate}[(i)]
	\item\label{item:ANN:scalar:1} it holds that $\dims(\lambda \circledast \Phi ) = \dims(\Phi)$,
	\item\label{item:ANN:scalar:2} it holds for all $a \in C(\R, \R)$ that $\functionANN(\lambda \circledast \Phi) \in C(\R^{\inDimANN(\Phi)}, \R^{\outDimANN(\Phi)})$,
	 and 
	 \item\label{item:ANN:scalar:3} it holds for all $a \in C(\R, \R)$, $x \in \R^{\inDimANN(\Phi)}$ that
	\begin{equation}
	(\functionANN(\lambda \circledast \Phi))(x) = \lambda \big( (\functionANN(\Phi))(x) \big)
	\end{equation}
\end{enumerate}
(cf.~Definition~\ref{Definition:ANNrealization} and Definition~\ref{Definition:ANNscalar}).
\end{lemma}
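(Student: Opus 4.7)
The plan is to reduce all three claims to properties already established for matrix-ANNs composed with arbitrary ANNs, namely Lemma~\ref{lem:ANN:matrix} and Lemma~\ref{lem:ANN:matrix:comp}. By Definition~\ref{Definition:ANNscalar} we have $\lambda \circledast \Phi = \compANN{\matrixANN_{\lambda \idMatrix_{\outDimANN(\Phi)}}}{\Phi}$, so all claims reduce to statements about this composition with the specific choice $W = \lambda \idMatrix_{\outDimANN(\Phi)} \in \R^{\outDimANN(\Phi) \times \outDimANN(\Phi)}$.

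First I would handle items~(ii) and~(iii), which are the substantive claims. Lemma~\ref{lem:ANN:matrix:comp}~(i)--(ii) applied with $m = \outDimANN(\Phi)$ and $W$ as above yields immediately that $\functionANN(\lambda \circledast \Phi) \in C(\R^{\inDimANN(\Phi)}, \R^{\outDimANN(\Phi)})$ and that for every $x \in \R^{\inDimANN(\Phi)}$ it holds that
\begin{equation}
(\functionANN(\lambda \circledast \Phi))(x) = (\lambda \idMatrix_{\outDimANN(\Phi)}) \bigl((\functionANN(\Phi))(x)\bigr) = \lambda \bigl((\functionANN(\Phi))(x)\bigr),
\end{equation}
using Definition~\ref{Definition:identityMatrix} to collapse $\lambda \idMatrix_{\outDimANN(\Phi)} y$ to $\lambda y$. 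This establishes (ii) and (iii).

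For item~(i) the plan is to track the architecture vector through Definition~\ref{Definition:ANNcomposition}. Since $\matrixANN_W = (W,0) \in \R^{\outDimANN(\Phi) \times \outDimANN(\Phi)} \times \R^{\outDimANN(\Phi)}$ has length $\lengthANN(\matrixANN_W) = 1$, the composition falls into either the $L=1<\mathscr{L}$ or the $L=1=\mathscr{L}$ case of \eqref{ANNoperations:Composition}, where $\mathscr{L} = \lengthANN(\Phi)$. In either case the only modification to $\Phi$ is that its terminal weight pair $(\mathscr{W}_\mathscr{L}, \mathscr{B}_\mathscr{L})$ is replaced by $(\lambda \idMatrix_{\outDimANN(\Phi)} \mathscr{W}_\mathscr{L}, \lambda \idMatrix_{\outDimANN(\Phi)} \mathscr{B}_\mathscr{L})$, a pair with identical dimensions. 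Hence $\dims(\lambda \circledast \Phi) = \dims(\Phi)$. Alternatively, this also follows directly from the general composition dimension formula in \cite[Proposition~2.6]{GrohsHornungJentzen2019} combined with Lemma~\ref{lem:ANN:matrix}~(ii).

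No step poses a real obstacle; the only mild bookkeeping point is the case distinction in Definition~\ref{Definition:ANNcomposition} for item~(i), but in every branch the architecture of $\Phi$ is preserved because $\lambda \idMatrix_{\outDimANN(\Phi)}$ is square.
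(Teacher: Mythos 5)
Your proposal is correct and follows essentially the same route as the paper: both prove items (ii) and (iii) by invoking Lemma~\ref{lem:ANN:matrix:comp} with $W=\lambda\idMatrix_{\outDimANN(\Phi)}$ and then collapsing $\lambda\idMatrix_{\outDimANN(\Phi)}y$ to $\lambda y$. For item (i) the paper directly cites $\dims(\matrixANN_{\lambda\idMatrix_{\outDimANN(\Phi)}})=(\outDimANN(\Phi),\outDimANN(\Phi))$ from Lemma~\ref{lem:ANN:matrix} together with the composition-dimension formula (the route you mention as an ``alternative''), whereas you additionally offer a slightly more elementary bookkeeping argument straight from the case split in Definition~\ref{Definition:ANNcomposition}; both are valid and differ only in how much is delegated to the cited proposition.
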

\begin{proof}[Proof of Lemma~\ref{lem:ANNscalar}]
Throughout this proof let $L \in \N$, $l_0, l_1, \ldots, l_L \in \N$ satisfy that $ L = \lengthANN(\Phi)$ and
$ (l_0, l_1, \ldots, l_L) = \dims(\Phi)$. Note that item~\eqref{item:ANN:matrix:2} in Lemma~\ref{lem:ANN:matrix} proves that
\begin{equation}
\dims(\matrixANN_{\lambda \idMatrix_{\outDimANN(\Phi)}}) =  (\outDimANN(\Phi), \outDimANN(\Phi))
\end{equation}
(cf.~Definition~\ref{Definition:identityMatrix} and Definition~\ref{Definition:ANN:matrix}).
Combining this and, e.g., \cite[item~(i) in Proposition~2.6]{GrohsHornungJentzen2019} assures that
\begin{equation}
\dims(\lambda \circledast \Phi ) = \dims (\compANN{\matrixANN_{\lambda \idMatrix_{\outDimANN(\Phi)}}}{\Phi}) = (l_0, l_1, \ldots, l_{L-1}, \outDimANN(\Phi)) = \dims (\Phi).
\end{equation}
This establishes item~\eqref{item:ANN:scalar:1}. 
Moreover, observe that  items~\eqref{item:ANN:matrix:comp:1}--\eqref{item:ANN:matrix:comp:2} in Lemma~\ref{lem:ANN:matrix:comp} demonstrate that for all $a \in C(\R, \R)$, $x \in \R^{\inDimANN(\Phi)}$ it holds that $\functionANN(\lambda \circledast \Phi) \in C(\R^{\inDimANN(\Phi)}, \R^{\outDimANN(\Phi)})$ and
\begin{align}
\begin{split}
(\functionANN(\lambda \circledast \Phi))(x) &= (\functionANN (\compANN{\matrixANN_{\lambda \idMatrix_{\outDimANN(\Phi)}}}{\Phi}))(x)\\
& = \lambda  \idMatrix_{\outDimANN(\Phi)} \! \big( (\functionANN (\Phi))(x) \big) = \lambda \big( (\functionANN(\Phi))(x) \big).
\end{split}
\end{align} 
This establishes items~\eqref{item:ANN:scalar:2}--\eqref{item:ANN:scalar:3}.
The proof of Lemma~\ref{lem:ANNscalar} is thus completed.
\end{proof}

\subsection{Representations of the identities with rectifier functions}
\label{subsec:identity}

\begin{definition}
	\label{Definition:ReLu:identity}
	We denote by $\idRelu = (\idRelu_d)_{d \in \N} \colon \N \to \ANNs$ the function which satisfies for all $d \in \N$ that 
	\begin{equation}
	\label{eq:def:id:1}
	\idRelu_1 = \left( \left(\begin{pmatrix}
	1\\
	-1
	\end{pmatrix}, \begin{pmatrix}
	0\\
	0
	\end{pmatrix} \right),
	\Big(	\begin{pmatrix}
	1& -1
	\end{pmatrix}, 
	0 \Big)
	 \right)  \in \big((\R^{2 \times 1} \times \R^{2}) \times (\R^{1 \times 2} \times \R^1) \big)
	\end{equation} 
	and 
	\begin{equation}
	\idRelu_d = \paraANN{d} (\idRelu_1, \idRelu_1, \ldots, \idRelu_1)
	\end{equation}
	(cf.~Definition~\ref{Def:ANN} and
	Definition~\ref{Definition:simpleParallelization}).
\end{definition}

\begin{lemma}
\label{lem:Relu:identity}
Let $d \in \N$, $a \in C(\R, \R)$ satisfy for all $x \in \R$ that $a(x) = \max\{x, 0\}$. Then
\begin{enumerate}[(i)]
	\item
	\label{item:lem:Relu:dims} 
	it holds that $\dims(\idRelu_d) = (d, 2d, d) \in \N^3$,
		\item
	\label{item:lem:Relu:cont} 
	it holds that $ \functionANN(\idRelu_d) \in C(\R^d, \R^d)$, and
	\item
	\label{item:lem:Relu:real}
	 it holds for all $x \in \R^d$ that 
	 \begin{equation}
(\functionANN(\idRelu_d))(x) = x
\end{equation} 
\end{enumerate}
(cf.~Definition~\ref{Def:ANN}, Definition~\ref{Definition:ANNrealization}, and Definition~\ref{Definition:ReLu:identity}).
\end{lemma}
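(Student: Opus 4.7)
The plan is to first handle the base case $d=1$ by direct computation using the formula for $\functionANN$ from Definition~\ref{Definition:ANNrealization}, and then lift the result to arbitrary $d$ by invoking the parallelization construction from Definition~\ref{Definition:simpleParallelization}.

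For the base case, I would unpack $\idRelu_1$ as defined in \eqref{eq:def:id:1}: it is a length-$2$ ANN with weight matrices $W_1 = (1,-1)^{T} \in \R^{2\times 1}$, $W_2 = (1,-1) \in \R^{1\times 2}$ and zero biases. From the tuple structure one reads off immediately that $\dims(\idRelu_1) = (1,2,1)$. For the realization, applying \eqref{setting_NN:ass2} with $x_0 \in \R$ gives $x_1 = \activationDim{2}(W_1 x_0) = (a(x_0), a(-x_0))$ and then $(\functionANN(\idRelu_1))(x_0) = W_2 x_1 = a(x_0) - a(-x_0)$. Under the hypothesis that $a(y) = \max\{y, 0\}$, the elementary identity $\max\{y,0\} - \max\{-y,0\} = y$ for all $y \in \R$ yields $(\functionANN(\idRelu_1))(x_0) = x_0$, which establishes items~\eqref{item:lem:Relu:dims}--\eqref{item:lem:Relu:real} in the case $d=1$.

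For general $d \in \N$, I would invoke the fact that $\idRelu_d = \paraANN{d}(\idRelu_1, \ldots, \idRelu_1)$, where all $d$ constituent networks share the common length $\lengthANN(\idRelu_1) = 2$, so the parallelization is well defined. From the explicit block-diagonal formula \eqref{parallelisationSameLengthDef}, the weight matrices of $\idRelu_d$ have sizes $2d \times d$ and $d \times 2d$ with zero biases, so $\dims(\idRelu_d) = (d, 2d, d)$, giving item~\eqref{item:lem:Relu:dims}. Then a standard property of parallelizations (see, e.g., the analogue of \cite[Proposition~2.19]{GrohsHornungJentzen2019} for the $\paraANN{n}$ construction) ensures that $\functionANN(\idRelu_d) \in C(\R^d, \R^d)$ with $(\functionANN(\idRelu_d))(x) = ((\functionANN(\idRelu_1))(x_1), \ldots, (\functionANN(\idRelu_1))(x_d)) = (x_1, \ldots, x_d) = x$ for $x = (x_1,\ldots,x_d) \in \R^d$, yielding items~\eqref{item:lem:Relu:cont}--\eqref{item:lem:Relu:real}.

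There is no real obstacle here: the argument is entirely mechanical once one recognizes the ReLU identity $\max\{y,0\} - \max\{-y,0\} = y$ and invokes the componentwise behavior of $\paraANN{d}$. The mildest subtlety is simply keeping track of the block-diagonal structure in \eqref{parallelisationSameLengthDef} to confirm that the hidden dimension indeed adds up to $2d$, but this follows immediately from the explicit formula.
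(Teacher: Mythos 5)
Your proposal is correct and follows essentially the same approach as the paper: compute $\dims(\idRelu_1)=(1,2,1)$ and the scalar ReLU identity $a(x)-a(-x)=x$ directly, then lift to general $d$ via the componentwise behavior of the parallelization $\paraANN{d}$ (the paper cites \cite[Lemma~2.18]{GrohsHornungJentzen2019} and \cite[Proposition~2.19]{GrohsHornungJentzen2019} for the two lifting steps, but your direct reading of the block-diagonal structure in \eqref{parallelisationSameLengthDef} is an equivalent way to obtain the dimension claim).
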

\begin{proof}[Proof of Lemma~\ref{lem:Relu:identity}]
Throughout this proof let $L =2$, $l_0 = 1$, $l_1 = 2$, $l_2 =1$. Note that \eqref{eq:def:id:1} ensures that
\begin{equation}
\dims (\idRelu_1) = (1, 2, 1) = (l_0, l_1, l_2).
\end{equation}
This and, e.g., \cite[Lemma~2.18]{GrohsHornungJentzen2019} prove that
\begin{equation}
\begin{split}
&\paraANN{d} (\idRelu_1, \idRelu_1, \ldots, \idRelu_1) \\
&\in \big(\! \times_{k = 1}^L \big( \R^{(d l_k) \times (d l_{k-1})} \times \R^{(d l_k)}\big) \big) = \big(\big(\R^{(2d) \times d} \times \R^{2d}\big) \times \big(\R^{d \times (2d)} \times \R^d\big) \big)
\end{split}
\end{equation}
(cf.~Definition~\ref{Definition:simpleParallelization}).
Hence, we obtain that $\dims(\idRelu_d) = (d, 2d, d) \in \N^3$. This establishes item~\eqref{item:lem:Relu:dims}.
Next note that \eqref{eq:def:id:1} assures that for all $x \in \R$ it holds that
\begin{equation}
(\functionANN(\idRelu_1))(x) = a(x) - a(-x) = \max\{x, 0\} - \max\{-x, 0\} = x.
\end{equation}
Combining this and, e.g., \cite[Proposition~2.19]{GrohsHornungJentzen2019}
demonstrates that for all $ x = (x_1, x_2, \ldots, x_d) \in \R^d$ it holds that $\functionANN(\idRelu_d) \in C(\R^d, \R^d)$ and
\begin{equation}
\begin{split}
(\functionANN(\idRelu_d))(x) &= \big( \functionANN \big(\paraANN{d} (\idRelu_1, \idRelu_1, \ldots, \idRelu_1)\big)\big)(x_1, x_2, \ldots, x_d)\\
& =  \big( (\functionANN(\idRelu_1))(x_1), (\functionANN(\idRelu_1))(x_2), \ldots, (\functionANN(\idRelu_1))(x_d)\big)\\
& = (x_1, x_2, \ldots, x_d) = x.
\end{split}
\end{equation}
This establishes 
 items~\eqref{item:lem:Relu:cont}--\eqref{item:lem:Relu:real}.
The proof of Lemma~\ref{lem:Relu:identity} is thus completed.
\end{proof}

\subsection{Sums of ANNs with the same length}
\label{subsec:sums}

\begin{definition}
	\label{Definition:ANN:sum}
	Let $m, n \in \N$. Then we denote by $\sumANN_{m, n} \in (\R^{m \times (nm)} \times \R^m)$ the pair given by
	\begin{equation}
	\sumANN_{m, n} = \matrixANN_{(\idMatrix_m \,\,\,  \idMatrix_m \,\,\, \ldots \,\,\, \idMatrix_m) }
	\end{equation}
	(cf.~Definition~\ref{Definition:identityMatrix} and Definition~\ref{Definition:ANN:matrix}).
\end{definition}

\begin{lemma}
	\label{lem:def:ANNsum}
	Let $m, n \in \N$. Then
	\begin{enumerate}[(i)]
		\item\label{item:ANNsum:1} it holds that $\sumANN_{m, n} \in \ANNs$,
		\item\label{item:ANNsum:2} it holds that $\dims (\sumANN_{m, n}) = (nm, m) \in \N^2$,
		\item\label{item:ANNsum:3} it holds for all $a \in C(\R, \R)$ that $\functionANN(\sumANN_{m, n}) \in C(\R^{nm}, \R^m)$, and
		\item\label{item:ANNsum:4} it holds for all $a \in C(\R, \R)$, $x_1, x_2, \ldots, x_n \in \R^{m}$ that 
		\begin{equation}
		(\functionANN(\sumANN_{m, n})) (x_1, x_2, \ldots, x_n) = \smallsum_{k=1}^n x_k
		\end{equation}
	\end{enumerate}
	(cf.~Definition~\ref{Def:ANN}, Definition~\ref{Definition:ANNrealization}, and 
	Definition~\ref{Definition:ANN:sum}).
\end{lemma}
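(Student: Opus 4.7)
The plan is to apply Lemma~\ref{lem:ANN:matrix} directly to the matrix $W = (\idMatrix_m \,\, \idMatrix_m \,\, \ldots \,\, \idMatrix_m) \in \R^{m \times (nm)}$, since by Definition~\ref{Definition:ANN:sum} we have $\sumANN_{m,n} = \matrixANN_W$ with this particular $W$. All four items then follow essentially by unpacking the definitions; no nontrivial step is involved.

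More concretely, first I would observe that $W \in \R^{m \times (nm)}$, so Lemma~\ref{lem:ANN:matrix} (applied with the parameters $m \leftarrow m$ and $n \leftarrow nm$) immediately yields items \eqref{item:ANNsum:1}, \eqref{item:ANNsum:2}, and \eqref{item:ANNsum:3}: namely $\sumANN_{m,n} = \matrixANN_W \in \ANNs$, $\dims(\sumANN_{m,n}) = (nm, m)$, and $\functionANN(\sumANN_{m,n}) \in C(\R^{nm}, \R^m)$ for every $a \in C(\R, \R)$.

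For item \eqref{item:ANNsum:4}, I would use item \eqref{item:ANN:matrix:4} of Lemma~\ref{lem:ANN:matrix}, which gives that for all $a \in C(\R,\R)$ and $x \in \R^{nm}$ it holds that $(\functionANN(\matrixANN_W))(x) = Wx$. Identifying $\R^{nm}$ with $(\R^m)^n$ via concatenation, for any $x_1, x_2, \ldots, x_n \in \R^m$ the block-matrix product computes to
\begin{equation}
W \begin{pmatrix} x_1 \\ x_2 \\ \vdots \\ x_n \end{pmatrix} = \smallsum_{k=1}^n \idMatrix_m \, x_k = \smallsum_{k=1}^n x_k,
\end{equation}
which is exactly the claim.

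There is no main obstacle here; the lemma is a direct corollary of Lemma~\ref{lem:ANN:matrix} applied to the specific block matrix $(\idMatrix_m \,\, \idMatrix_m \,\, \ldots \,\, \idMatrix_m)$, and the only thing to verify is the elementary block-matrix multiplication above.
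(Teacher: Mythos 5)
Your proof is correct and takes essentially the same approach as the paper's: both reduce the claim to Lemma~\ref{lem:ANN:matrix} applied to the block matrix $(\idMatrix_m \,\, \idMatrix_m \,\, \ldots \,\, \idMatrix_m)$ and finish with the elementary block-matrix multiplication. The only cosmetic difference is that the paper derives items~(i)--(ii) directly from the fact that $\sumANN_{m,n}\in(\R^{m\times(nm)}\times\R^m)$, whereas you route them through Lemma~\ref{lem:ANN:matrix}; the content is identical.
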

\begin{proof}[Proof of Lemma~\ref{lem:def:ANNsum}]
	Note that the fact that $\sumANN_{m, n} \in (\R^{m \times (nm)} \times \R^m)$ ensures that $\sumANN_{m, n} \in \ANNs$ and $\dims (\sumANN_{m, n}) = (nm, m) \in \N^2$. This establishes items~\eqref{item:ANNsum:1}--\eqref{item:ANNsum:2}. Next observe that items~\eqref{item:ANN:matrix:3}--\eqref{item:ANN:matrix:4} in Lemma~\ref{lem:ANN:matrix} prove that for all $a \in C(\R, \R)$, $x_1, x_2, \ldots, x_n \in \R^{m}$  it holds that $\functionANN(\sumANN_{m, n}) \in C(\R^{nm}, \R^m)$ and
	\begin{equation}
	\begin{split}
	(\functionANN(\sumANN_{m, n})) (x_1, x_2, \ldots, x_n) &=
	\big( \functionANN \big( \matrixANN_{(\idMatrix_m \,\,\,  \idMatrix_m \,\,\, \ldots \,\,\, \idMatrix_m) } \big)\big)(x_1, x_2, \ldots, x_n)\\
	&= (\idMatrix_m \,\,\,  \idMatrix_m \,\,\, \ldots \,\,\, \idMatrix_m) (x_1, x_2, \ldots, x_n) =  \smallsum_{k=1}^n x_k
	\end{split}
	\end{equation}
	(cf.~Definition~\ref{Definition:identityMatrix} and Definition~\ref{Definition:ANN:matrix}).
	This establishes items~\eqref{item:ANNsum:3}--\eqref{item:ANNsum:4}.
	The proof of Lemma~\ref{lem:def:ANNsum} is thus completed.
\end{proof}

\begin{lemma}
	\label{lem:def:ANNsum:comp:left}
	Let $m, n \in \N$, $a \in C(\R, \R)$,  $\Phi  \in \{\Psi \in \ANNs \colon \outDimANN(\Psi) = nm\}$ (cf.~Definition~\ref{Def:ANN}). Then
	\begin{enumerate}[(i)]
		\item\label{item:ANNsum:comp:left:1} it holds that  $\functionANN(\compANN{\sumANN_{m, n}}{\Phi}) \in C(\R^{\inDimANN(\Phi)}, \R^m) $ and
		\item\label{item:ANNsum:comp:left:2}
 it holds for all  $x \in \R^{\inDimANN(\Phi)}$, $y_1, y_2, \ldots, y_n \in \R^{m}$ with $(\functionANN (\Phi ))(x) = (y_1, y_2, \ldots, y_n)$ 
		that 
		\begin{equation}
		\big( \functionANN(\compANN{\sumANN_{m, n}}{\Phi}) \big)(x) = \smallsum_{k=1}^n y_k
		\end{equation}
	\end{enumerate}
	(cf.~Definition~\ref{Definition:ANNrealization},
	Definition~\ref{Definition:ANNcomposition}, and 
	Definition~\ref{Definition:ANN:sum}).
\end{lemma}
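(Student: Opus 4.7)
The plan is to reduce the claim to a direct application of Lemma~\ref{lem:def:ANNsum} together with the standard composition-of-realizations property for ANNs (cited in the excerpt as \cite[Proposition~2.6]{GrohsHornungJentzen2019}, which has already been used in the proofs of Lemma~\ref{lem:ANN:vector:comp} and Lemma~\ref{lem:ANN:matrix:comp}). Since the hypothesis $\outDimANN(\Phi) = nm$ matches the input dimension $nm$ of $\sumANN_{m,n}$ (by item~\eqref{item:ANNsum:2} of Lemma~\ref{lem:def:ANNsum}), the composition $\compANN{\sumANN_{m,n}}{\Phi}$ is well-defined in $\ANNs$.

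First I would invoke items~\eqref{item:ANNsum:3}--\eqref{item:ANNsum:4} of Lemma~\ref{lem:def:ANNsum} to record that $\functionANN(\sumANN_{m,n}) \in C(\R^{nm}, \R^m)$ and that for all $z_1, z_2, \ldots, z_n \in \R^m$ one has $(\functionANN(\sumANN_{m,n}))(z_1, z_2, \ldots, z_n) = \smallsum_{k=1}^n z_k$. Next I would apply the composition-of-realizations identity to conclude that $\functionANN(\compANN{\sumANN_{m,n}}{\Phi}) \in C(\R^{\inDimANN(\Phi)}, \R^m)$ and that for every $x \in \R^{\inDimANN(\Phi)}$
\begin{equation}
\bigl(\functionANN(\compANN{\sumANN_{m,n}}{\Phi})\bigr)(x) = \bigl(\functionANN(\sumANN_{m,n})\bigr)\bigl((\functionANN(\Phi))(x)\bigr).
\end{equation}
This already establishes item~\eqref{item:ANNsum:comp:left:1}.

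For item~\eqref{item:ANNsum:comp:left:2}, I would simply substitute the hypothesis $(\functionANN(\Phi))(x) = (y_1, y_2, \ldots, y_n)$ into the display above and use item~\eqref{item:ANNsum:4} of Lemma~\ref{lem:def:ANNsum} to get $\bigl(\functionANN(\compANN{\sumANN_{m,n}}{\Phi})\bigr)(x) = \smallsum_{k=1}^n y_k$, which is the desired equality. There is no genuine obstacle here; the only point requiring a little care is checking that the composition is legal (the dimension match $\outDimANN(\Phi) = nm = \inDimANN(\sumANN_{m,n})$), which is immediate from the assumption on $\Phi$ and item~\eqref{item:ANNsum:2} of Lemma~\ref{lem:def:ANNsum}.
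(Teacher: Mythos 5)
Your proposal is correct and matches the paper's proof exactly: both first record items~\eqref{item:ANNsum:3}--\eqref{item:ANNsum:4} of Lemma~\ref{lem:def:ANNsum} and then combine them with the composition-of-realizations property (item~(v) of \cite[Proposition~2.6]{GrohsHornungJentzen2019}) to obtain both items. Your additional remark on the dimension compatibility $\outDimANN(\Phi)=nm=\inDimANN(\sumANN_{m,n})$ is a minor but sensible sanity check that the paper leaves implicit.
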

\begin{proof}[Proof of Lemma~\ref{lem:def:ANNsum:comp:left}]
Note that Lemma~\ref{lem:def:ANNsum} ensures that for all $x_1, x_2, \ldots, x_n \in \R^{m}$  it holds that $\functionANN(\sumANN_{m, n}) \in C(\R^{nm}, \R^m)$ and
	\begin{equation}
	\begin{split}
	(\functionANN(\sumANN_{m, n})) (x_1, x_2, \ldots, x_n)  =  \smallsum_{k=1}^n x_k.
	\end{split}
	\end{equation}
	Combining this and, e.g., \cite[item~(v) in Proposition~2.6]{GrohsHornungJentzen2019} establishes items~\eqref{item:ANNsum:comp:left:1}--\eqref{item:ANNsum:comp:left:2}.
	The proof of Lemma~\ref{lem:def:ANNsum:comp:left} is thus completed.
\end{proof}

\begin{lemma}
	\label{lem:def:ANNsum:comp:right}
	Let  $n \in \N$, $a \in C(\R, \R)$, $\Phi \in \ANNs$ (cf.~Definition~\ref{Def:ANN}). Then
	\begin{enumerate}[(i)]
		\item\label{item:ANNsum:comp:right:1} it holds that $\functionANN(\compANN{\Phi}{\sumANN_{\inDimANN(\Phi), n}}) \in C(\R^{n \inDimANN(\Phi)}, \R^{\outDimANN(\Phi)}) $ and
\item\label{item:ANNsum:comp:right:2} it holds for all  $x_1, x_2, \ldots, x_n \in \R^{\inDimANN(\Phi)}$ that  
		\begin{equation}
		\big(\functionANN(\compANN{\Phi}{\sumANN_{\inDimANN(\Phi), n}}) \big)(x_1, x_2, \ldots, x_n) = (\functionANN(\Phi))(\smallsum_{k=1}^n x_k)
		\end{equation}
	\end{enumerate}
	(cf.~Definition~\ref{Definition:ANNrealization},
	Definition~\ref{Definition:ANNcomposition}, and 
	Definition~\ref{Definition:ANN:sum}).
\end{lemma}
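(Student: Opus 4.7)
The plan is to mirror the proof of Lemma~\ref{lem:def:ANNsum:comp:left} in a completely routine way, by first computing the realization of $\sumANN_{\inDimANN(\Phi),n}$ via Lemma~\ref{lem:def:ANNsum} and then invoking the standard composition identity for realizations (item~(v) in Proposition~2.6 of \cite{GrohsHornungJentzen2019}).

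First, I would apply Lemma~\ref{lem:def:ANNsum} with $m=\inDimANN(\Phi)$ and $n=n$ to obtain that $\sumANN_{\inDimANN(\Phi),n}\in\ANNs$, that $\dims(\sumANN_{\inDimANN(\Phi),n})=(n\inDimANN(\Phi),\inDimANN(\Phi))$, that $\functionANN(\sumANN_{\inDimANN(\Phi),n})\in C(\R^{n\inDimANN(\Phi)},\R^{\inDimANN(\Phi)})$, and that for all $x_1,x_2,\ldots,x_n\in\R^{\inDimANN(\Phi)}$ it holds that
\begin{equation}
(\functionANN(\sumANN_{\inDimANN(\Phi),n}))(x_1,x_2,\ldots,x_n)=\smallsum_{k=1}^n x_k.
\end{equation}
In particular $\outDimANN(\sumANN_{\inDimANN(\Phi),n})=\inDimANN(\Phi)$, so the composition $\compANN{\Phi}{\sumANN_{\inDimANN(\Phi),n}}$ lies in the domain of Definition~\ref{Definition:ANNcomposition}.

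Next, I would invoke the realization-of-composition identity (item~(v) in Proposition~2.6 of \cite{GrohsHornungJentzen2019}), which yields $\functionANN(\compANN{\Phi}{\sumANN_{\inDimANN(\Phi),n}})\in C(\R^{n\inDimANN(\Phi)},\R^{\outDimANN(\Phi)})$ together with the pointwise equality
\begin{equation}
\big(\functionANN(\compANN{\Phi}{\sumANN_{\inDimANN(\Phi),n}})\big)(x_1,x_2,\ldots,x_n)=(\functionANN(\Phi))\big((\functionANN(\sumANN_{\inDimANN(\Phi),n}))(x_1,x_2,\ldots,x_n)\big)
\end{equation}
for all $x_1,x_2,\ldots,x_n\in\R^{\inDimANN(\Phi)}$. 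Substituting the formula for $\functionANN(\sumANN_{\inDimANN(\Phi),n})$ obtained in the previous step then gives exactly $(\functionANN(\Phi))(\smallsum_{k=1}^n x_k)$, which is the claim of item~(ii), while the continuity statement is item~(i). There is no genuine obstacle here; the statement is the ``right-composition'' analogue of Lemma~\ref{lem:def:ANNsum:comp:left} and the only thing to check is that the dimensions of $\Phi$ and $\sumANN_{\inDimANN(\Phi),n}$ are compatible, which is built into the subscript $\inDimANN(\Phi)$.
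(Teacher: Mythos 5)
Your proposal is correct and follows the paper's proof essentially verbatim: compute $\functionANN(\sumANN_{\inDimANN(\Phi),n})$ via Lemma~\ref{lem:def:ANNsum} and then apply the composition identity from item~(v) in Proposition~2.6 of \cite{GrohsHornungJentzen2019}.
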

\begin{proof}[Proof of Lemma~\ref{lem:def:ANNsum:comp:right}]
Note that Lemma~\ref{lem:def:ANNsum} demonstrates that for all $m \in \N$, $x_1, x_2, \ldots, x_n \in \R^{m}$  it holds that $\functionANN(\sumANN_{m, n}) \in C(\R^{nm}, \R^m)$ and
	\begin{equation}
	\begin{split}
	(\functionANN(\sumANN_{m, n})) (x_1, x_2, \ldots, x_n)  =  \smallsum_{k=1}^n x_k.
	\end{split}
	\end{equation}
	Combining this and, e.g., \cite[item~(v) in Proposition~2.6]{GrohsHornungJentzen2019} establishes items~\eqref{item:ANNsum:comp:right:1}--\eqref{item:ANNsum:comp:right:2}.
	The proof of Lemma~\ref{lem:def:ANNsum:comp:right} is thus completed.
\end{proof}

\begin{definition}
	\label{Def:Transpose}
	Let $m, n \in \N$, $A \in \R^{m \times n}$. Then we denote by $A^* \in \R^{n \times m}$ the transpose of A.
\end{definition}

\begin{definition}
	\label{Definition:ANN:extension}
	Let $m, n \in \N$. Then we denote by $\extensionANN_{m, n} \in (\R^{(nm) \times m} \times \R^{nm})$	the pair given by
	\begin{equation}
	\extensionANN_{m, n} = \matrixANN_{(\idMatrix_m \,\,\,  \idMatrix_m \,\,\, \ldots \,\,\, \idMatrix_m)^* }
	\end{equation}
	(cf.~Definition~\ref{Definition:identityMatrix}, Definition~\ref{Definition:ANN:matrix}, and Definition~\ref{Def:Transpose}).
\end{definition}

\begin{lemma}
	\label{lem:ANN:extension}
	Let $m, n \in \N$. Then
	\begin{enumerate}[(i)]
		\item\label{item:ANN:extension:1} it holds that $\extensionANN_{m, n} \in \ANNs$,
		\item\label{item:ANN:extension:2} it holds that $	\dims (\extensionANN_{m, n}) = (m, nm) \in \N^2$,
		\item\label{item:ANN:extension:3} it holds for all $a \in C(\R, \R)$ that $\functionANN(\extensionANN_{m, n}) \in C(\R^{m}, \R^{nm})$, and
		\item\label{item:ANN:extension:4} it holds for all $a \in C(\R, \R)$, $x \in \R^m$ that 
		\begin{equation}
		(\functionANN(\extensionANN_{m, n})) (x) = (x, x, \ldots, x)
		\end{equation}
	\end{enumerate}
	(cf.~Definition~\ref{Def:ANN}, Definition~\ref{Definition:ANNrealization}, and 
	Definition~\ref{Definition:ANN:extension}).
\end{lemma}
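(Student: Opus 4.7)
The plan is to mirror the structure of the proof of Lemma~\ref{lem:def:ANNsum} above, since $\extensionANN_{m,n}$ is defined in exactly the same style as $\sumANN_{m,n}$ but using the transposed block matrix. I would proceed in two short steps.

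First, I would establish items~\eqref{item:ANN:extension:1}--\eqref{item:ANN:extension:2} purely from the definitions. By Definition~\ref{Definition:ANN:extension} we have $\extensionANN_{m,n} = \matrixANN_{(\idMatrix_m\ \idMatrix_m\ \ldots\ \idMatrix_m)^*}$, and the stacked matrix $(\idMatrix_m\ \idMatrix_m\ \ldots\ \idMatrix_m)^*$ lies in $\R^{(nm)\times m}$. Hence $\extensionANN_{m,n} \in (\R^{(nm)\times m}\times\R^{nm})$, which is a one-element tuple of a pair of a matrix and a vector and therefore lies in $\ANNs$ with $\dims(\extensionANN_{m,n}) = (m, nm) \in \N^2$. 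This gives items~\eqref{item:ANN:extension:1}--\eqref{item:ANN:extension:2}.

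Second, for items~\eqref{item:ANN:extension:3}--\eqref{item:ANN:extension:4} I would invoke items~\eqref{item:ANN:matrix:3}--\eqref{item:ANN:matrix:4} of Lemma~\ref{lem:ANN:matrix} with $W = (\idMatrix_m\ \idMatrix_m\ \ldots\ \idMatrix_m)^*$. These yield, for every $a \in C(\R,\R)$ and every $x \in \R^m$, that $\functionANN(\extensionANN_{m,n}) \in C(\R^m, \R^{nm})$ and
\begin{equation}
(\functionANN(\extensionANN_{m,n}))(x) = (\idMatrix_m\ \idMatrix_m\ \ldots\ \idMatrix_m)^*\, x = (x, x, \ldots, x),
\end{equation}
where the last equality is simply the block-vector computation: stacking $n$ copies of $\idMatrix_m$ vertically and multiplying by $x \in \R^m$ produces the $nm$-vector consisting of $n$ concatenated copies of $x$.

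There is no real obstacle here; this is a bookkeeping lemma whose proof is a direct application of Lemma~\ref{lem:ANN:matrix} together with the observation about the action of the stacked identity matrix. The only minor care needed is to interpret the block notation $(\idMatrix_m\ \idMatrix_m\ \ldots\ \idMatrix_m)^*$ correctly (as a tall matrix in $\R^{(nm)\times m}$) so that the dimensions in items~\eqref{item:ANN:extension:2}--\eqref{item:ANN:extension:3} come out right.
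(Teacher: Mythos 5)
Your proposal is correct and follows essentially the same approach as the paper: items~\eqref{item:ANN:extension:1}--\eqref{item:ANN:extension:2} from the observation that $\extensionANN_{m,n}\in(\R^{(nm)\times m}\times\R^{nm})$, and items~\eqref{item:ANN:extension:3}--\eqref{item:ANN:extension:4} by invoking items~\eqref{item:ANN:matrix:3}--\eqref{item:ANN:matrix:4} of Lemma~\ref{lem:ANN:matrix} together with the block-matrix computation $(\idMatrix_m\ \idMatrix_m\ \ldots\ \idMatrix_m)^{*}x=(x,x,\ldots,x)$.
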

\begin{proof}[Proof of Lemma~\ref{lem:ANN:extension}]
Note that the fact that $\extensionANN_{m, n} \in (\R^{(nm) \times m} \times \R^{nm})$ ensures that $\extensionANN_{m, n} \in \ANNs$ and $\dims (\extensionANN_{m, n}) = (m, nm) \in \N^2$. This establishes items~\eqref{item:ANN:extension:1}--\eqref{item:ANN:extension:2}. Next observe that items~\eqref{item:ANN:matrix:3}--\eqref{item:ANN:matrix:4} in Lemma~\ref{lem:ANN:matrix} prove that for all $a \in C(\R, \R)$, $x  \in \R^m$  it holds that
	$\functionANN(\extensionANN_{m, n}) \in C(\R^{m}, \R^{nm})$ and
	\begin{equation}
    \begin{split}
	(\functionANN(\extensionANN_{m, n})) (x) &= \big(\functionANN \big(  \matrixANN_{(\idMatrix_m \,\,\,  \idMatrix_m \,\,\, \ldots \,\,\, \idMatrix_m)^* } \big) \big)(x)\\
	& = (\idMatrix_m \,\,\,  \idMatrix_m \,\,\, \ldots \,\,\, \idMatrix_m)^{*} x =
	(x, x, \ldots, x)
	\end{split}
	\end{equation}
(cf.~Definition~\ref{Definition:identityMatrix} and Definition~\ref{Definition:ANN:matrix}).
	This establishes items~\eqref{item:ANN:extension:3}--\eqref{item:ANN:extension:4}.
	The proof of Lemma~\ref{lem:ANN:extension} is thus completed.
\end{proof}

\begin{lemma}
	\label{lem:ANN:extension:comp:left}
	Let $n \in \N$, $a \in C(\R, \R)$, $\Phi \in \ANNs$ (cf.~Definition~\ref{Def:ANN}). Then
	\begin{enumerate}[(i)]
		\item\label{item:ANN:extension:comp:left:1} it holds that $\functionANN(\compANN{\extensionANN_{\outDimANN(\Phi), n}}{\Phi}) \in C(\R^{\inDimANN(\Phi)}, \R^{n \outDimANN(\Phi)}) $ and
			\item\label{item:ANN:extension:comp:left:2}
it holds for all $x \in \R^{\inDimANN(\Phi)}$
		that 
		\begin{equation}
		\big( \functionANN(\compANN{\extensionANN_{\outDimANN(\Phi), n}}{\Phi}) \big)(x) = \big((\functionANN (\Phi))(x), (\functionANN (\Phi))(x), \ldots, (\functionANN (\Phi))(x) \big)
		\end{equation}
	\end{enumerate}
	(cf.~Definition~\ref{Definition:ANNrealization},
	Definition~\ref{Definition:ANNcomposition}, and 
	Definition~\ref{Definition:ANN:extension}).
\end{lemma}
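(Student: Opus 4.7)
The plan is to mirror the proof of Lemma~\ref{lem:def:ANNsum:comp:left} almost verbatim, replacing the role of $\sumANN_{m,n}$ with $\extensionANN_{m,n}$ and invoking Lemma~\ref{lem:ANN:extension} in place of Lemma~\ref{lem:def:ANNsum}. First I would recall from Lemma~\ref{lem:ANN:extension} that for all $a \in C(\R, \R)$ and all $x \in \R^{m}$ with $m = \outDimANN(\Phi)$ it holds that $\functionANN(\extensionANN_{m, n}) \in C(\R^{m}, \R^{nm})$ and $(\functionANN(\extensionANN_{m, n}))(x) = (x, x, \ldots, x)$. In particular, $\inDimANN(\extensionANN_{\outDimANN(\Phi), n}) = \outDimANN(\Phi)$, so the composition $\compANN{\extensionANN_{\outDimANN(\Phi), n}}{\Phi}$ is well-defined according to Definition~\ref{Definition:ANNcomposition}.

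Next I would apply the standard composition property recorded in \cite[item~(v) in Proposition~2.6]{GrohsHornungJentzen2019}, which states that the realization of a composition of two compatible ANNs coincides with the composition of their realizations. Combining this with the explicit formula for $\functionANN(\extensionANN_{\outDimANN(\Phi), n})$ above yields, for every $x \in \R^{\inDimANN(\Phi)}$,
\begin{equation*}
\big(\functionANN(\compANN{\extensionANN_{\outDimANN(\Phi), n}}{\Phi})\big)(x) = \big(\functionANN(\extensionANN_{\outDimANN(\Phi), n})\big)\big((\functionANN(\Phi))(x)\big) = \big((\functionANN(\Phi))(x), \ldots, (\functionANN(\Phi))(x)\big),
\end{equation*}
with $n$ copies on the right-hand side. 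The same proposition gives continuity of the composed realization as a map from $\R^{\inDimANN(\Phi)}$ to $\R^{n\outDimANN(\Phi)}$, establishing item~(i); item~(ii) is then the displayed identity above.

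There is essentially no obstacle here: the statement is a direct specialization of the general composition calculus (already cited from \cite{GrohsHornungJentzen2019}) to the extension ANN $\extensionANN_{\outDimANN(\Phi), n}$, whose realization has already been computed in Lemma~\ref{lem:ANN:extension}. The only minor bookkeeping point is verifying that the input dimension of $\extensionANN_{\outDimANN(\Phi), n}$ matches the output dimension of $\Phi$, which is immediate from item~\eqref{item:ANN:extension:2} of Lemma~\ref{lem:ANN:extension}.
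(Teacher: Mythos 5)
Your proof is correct and follows essentially the same route as the paper: both invoke Lemma~\ref{lem:ANN:extension} to pin down the realization of $\extensionANN_{\outDimANN(\Phi), n}$ and then apply the composition rule from \cite[item~(v) in Proposition~2.6]{GrohsHornungJentzen2019} to conclude. The extra bookkeeping remark about matching dimensions is harmless and consistent with the paper's (more terse) argument.
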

\begin{proof}[Proof of Lemma~\ref{lem:ANN:extension:comp:left}]
Note that Lemma~\ref{lem:ANN:extension} ensures that for all $m \in \N$,  $x  \in \R^m$  it holds that
	$\functionANN(\extensionANN_{m, n}) \in C(\R^{m}, \R^{nm})$ and
	\begin{equation}
	\begin{split}
	(\functionANN(\extensionANN_{m, n})) (x)  =
	(x, x, \ldots, x).
	\end{split}
	\end{equation}
	Combining this and, e.g., \cite[item~(v) in Proposition~2.6]{GrohsHornungJentzen2019} establishes items \eqref{item:ANN:extension:comp:left:1}--\eqref{item:ANN:extension:comp:left:2}.
	The proof of Lemma~\ref{lem:ANN:extension:comp:left} is thus completed.
\end{proof}

\begin{lemma}
	\label{lem:ANN:extension:comp:right}
	Let $m, n \in \N$, $a \in C(\R, \R)$, $\Phi \in \{\Psi \in \ANNs \colon \inDimANN(\Psi) = nm\}$ (cf.~Definition~\ref{Def:ANN}). Then
	\begin{enumerate}[(i)]
		\item\label{item:ANN:extension:comp:right:1} it holds that $\functionANN(\compANN{\Phi}{\extensionANN_{m, n}}) \in C(\R^{m}, \R^{\outDimANN(\Phi)}) $
		and
		\item\label{item:ANN:extension:comp:right:2} it holds for all  $x \in \R^{m}$ that  
		\begin{equation}
		\big(\functionANN(\compANN{\Phi}{\extensionANN_{m, n}}) \big)(x) = (\functionANN(\Phi))(x, x, \ldots, x)
		\end{equation}
	\end{enumerate}
	(cf.~Definition~\ref{Definition:ANNrealization},
	Definition~\ref{Definition:ANNcomposition}, and 
	Definition~\ref{Definition:ANN:extension}).
\end{lemma}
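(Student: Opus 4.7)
The plan is to follow the same pattern used in the previous lemma \ref{lem:ANN:extension:comp:left}, which is essentially a direct application of Lemma~\ref{lem:ANN:extension} together with the general composition rule for ANN realizations from \cite[Proposition~2.6]{GrohsHornungJentzen2019}.

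First I would invoke Lemma~\ref{lem:ANN:extension} to recall that $\extensionANN_{m,n} \in \ANNs$, that $\dims(\extensionANN_{m,n}) = (m, nm)$, that $\functionANN(\extensionANN_{m,n}) \in C(\R^m, \R^{nm})$, and that for all $x \in \R^m$ one has $(\functionANN(\extensionANN_{m,n}))(x) = (x, x, \ldots, x)$. This gives a clean realization of the ``extension ANN'' whose output dimension equals the input dimension $nm$ of $\Phi$, so the composition $\compANN{\Phi}{\extensionANN_{m,n}}$ is well defined in the sense of Definition~\ref{Definition:ANNcomposition}.

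Next I would apply \cite[item~(v) in Proposition~2.6]{GrohsHornungJentzen2019} (the same item used in the proof of Lemma~\ref{lem:ANN:extension:comp:left}), which states that for any two composable ANNs $\Phi_1, \Phi_2$ with $\inDimANN(\Phi_1) = \outDimANN(\Phi_2)$, one has $\functionANN(\compANN{\Phi_1}{\Phi_2}) \in C(\R^{\inDimANN(\Phi_2)}, \R^{\outDimANN(\Phi_1)})$ and $\functionANN(\compANN{\Phi_1}{\Phi_2}) = \functionANN(\Phi_1) \circ \functionANN(\Phi_2)$. Taking $\Phi_1 = \Phi$ and $\Phi_2 = \extensionANN_{m,n}$, and using that $\inDimANN(\extensionANN_{m,n}) = m$ and $\outDimANN(\extensionANN_{m,n}) = nm = \inDimANN(\Phi)$, this gives $\functionANN(\compANN{\Phi}{\extensionANN_{m,n}}) \in C(\R^m, \R^{\outDimANN(\Phi)})$, establishing item~(i). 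Then, for every $x \in \R^m$, combining the composition identity with the explicit form of $\functionANN(\extensionANN_{m,n})$ yields
\begin{equation*}
\bigl(\functionANN(\compANN{\Phi}{\extensionANN_{m,n}})\bigr)(x) = (\functionANN(\Phi))\bigl((\functionANN(\extensionANN_{m,n}))(x)\bigr) = (\functionANN(\Phi))(x,x,\ldots,x),
\end{equation*}
which establishes item~(ii).

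There is no real obstacle here: the argument is a verbatim mirror of the proof of Lemma~\ref{lem:ANN:extension:comp:left} with the roles of the outer and inner factor in the composition swapped, and with Lemma~\ref{lem:ANN:extension} replacing its use there. The only point requiring a little care is checking that the input/output dimensions match so that Definition~\ref{Definition:ANNcomposition} is applicable, but this is immediate from $\outDimANN(\extensionANN_{m,n}) = nm = \inDimANN(\Phi)$, which is guaranteed by the hypothesis that $\Phi \in \{\Psi \in \ANNs \colon \inDimANN(\Psi) = nm\}$.
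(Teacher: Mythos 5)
Your proposal is correct and follows essentially the same route as the paper: both invoke Lemma~\ref{lem:ANN:extension} for the realization and dimensions of $\extensionANN_{m,n}$ and then apply \cite[item~(v) in Proposition~2.6]{GrohsHornungJentzen2019} to the composition $\compANN{\Phi}{\extensionANN_{m,n}}$. Your explicit check that $\outDimANN(\extensionANN_{m,n}) = nm = \inDimANN(\Phi)$ is a nice detail that the paper leaves implicit.
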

\begin{proof}[Proof of Lemma~\ref{lem:ANN:extension:comp:right}]
Observe that Lemma~\ref{lem:ANN:extension} demonstrates that for all  $x  \in \R^m$ it holds that
$\functionANN(\extensionANN_{m, n}) \in C(\R^{m}, \R^{nm})$ and
\begin{equation}
\begin{split}
(\functionANN(\extensionANN_{m, n})) (x)  =
(x, x, \ldots, x).
\end{split}
\end{equation}
	Combining this and, e.g., \cite[item~(v) in Proposition~2.6]{GrohsHornungJentzen2019} establishes items \eqref{item:ANN:extension:comp:right:1}--\eqref{item:ANN:extension:comp:right:2}.
	The proof of Lemma~\ref{lem:ANN:extension:comp:right} is thus completed.
\end{proof}

\begin{definition}[Sums of ANNs with the same length]
	\label{Definition:ANNsum:same}
	Let $n \in \N$, $\Phi_1, \Phi_2, \ldots, \Phi_n \in \ANNs$ satisfy  for all  $k \in \{1, 2, \ldots, n\}$ that
	$\lengthANN(\Phi_k) = \lengthANN(\Phi_1)$,
	$\inDimANN(\Phi_k) = \inDimANN(\Phi_1)$, and $\outDimANN(\Phi_k) = \outDimANN(\Phi_1)$.
	Then we denote by $\oplus_{k \in \{1, 2, \ldots, n\}} \Phi_k$ (we denote by $\Phi_1 \oplus \Phi_2 \oplus \ldots \oplus \Phi_n$)
the tuple given by
\begin{equation}
\oplus_{k \in \{1, 2, \ldots, n\}} \Phi_k = \big( \compANN{\sumANN_{\outDimANN(\Phi_1), n}}{{\compANN{\big[\mathbf{P}_n(\Phi_1,\Phi_2,\dots, \Phi_n)\big]}{\extensionANN_{\inDimANN(\Phi_1), n}}}} \big) \in \ANNs
\end{equation}
(cf.~Definition~\ref{Def:ANN}, Definition~\ref{Definition:ANNcomposition},
Definition~\ref{Definition:simpleParallelization},
Definition~\ref{Definition:ANN:sum}, and
Definition~\ref{Definition:ANN:extension}
).
\end{definition}

\begin{definition}[Dimensions of ANNs]
	\label{Def:ANN:dimensions}
	Let $n \in \N_0$. Then
	we denote by 	$\dimANNlevel_n \colon \ANNs \to \N_0$ the function which satisfies for all  $ L\in\N$, $l_0,l_1,\ldots, l_L \in \N$, 
	$
	\Phi 
	\in  \allowbreak
	( \times_{k = 1}^L\allowbreak(\R^{l_k \times l_{k-1}} \times \R^{l_k}))$ 
	that
	\begin{align}
	\begin{split}
	\dimANNlevel_n (\Phi) =
	\begin{cases}
	l_n &\colon n \leq L \\
	0 &\colon n > L
	\end{cases}
	\end{split}
	\end{align}
	(cf.~Definition~\ref{Def:ANN}).
\end{definition}

\begin{lemma}
	\label{lem:sum:ANNs}
Let $n \in \N$, $\Phi_1, \Phi_2, \ldots, \Phi_n \in \ANNs$ satisfy
 for all  $k \in \{1, 2, \ldots, n\}$ that
	$\lengthANN(\Phi_k) = \lengthANN(\Phi_1)$,
$\inDimANN(\Phi_k) = \inDimANN(\Phi_1)$, and $\outDimANN(\Phi_k) = \outDimANN(\Phi_1)$ (cf.~Definition~\ref{Def:ANN}).
Then
\begin{enumerate}[(i)]
	\item\label{item:sum:ANNs:1} it holds that $	\lengthANN(\oplus_{k \in \{1, 2, \ldots, n\}} \Phi_k) = \lengthANN(\Phi_1)$,
	\item\label{item:sum:ANNs:2} it holds that
	\begin{align*}
	&\dims(\oplus_{k \in \{1, 2, \ldots, n\}} \Phi_k) \numberthis \\
	&= \big(\inDimANN(\Phi_1), \smallsum_{k=1}^n \dimANNlevel_1(\Phi_k), \smallsum_{k = 1}^n \dimANNlevel_2(\Phi_k), \ldots, \smallsum_{k=1}^n \dimANNlevel_{\lengthANN(\Phi_1)-1}(\Phi_k), \outDimANN(\Phi_1)\big),
	\end{align*}
	\item\label{item:sum:ANNs:3} it holds for all $a \in C(\R, \R)$ that $\mathcal{R}_{a}(\oplus_{k \in \{1, 2, \ldots, n\}} \Phi_k) \in C(\R^{\inDimANN(\Phi_1)}, \R^{\outDimANN(\Phi_1)})$, and 
		\item\label{item:sum:ANNs:4} it holds for all $a \in C(\R, \R)$, $x \in \R^{\inDimANN(\Phi_1)}$ that
	\begin{equation}
	\big(\mathcal{R}_{a} (\oplus_{k \in \{1, 2, \ldots, n\}} \Phi_k ) \big) (x) = \sum_{k=1}^n (\mathcal{R}_a(\Phi_k))(x)
	\end{equation}
\end{enumerate}
(cf.~Definition~\ref{Definition:ANNrealization},  Definition~\ref{Definition:ANNsum:same}, and Definition~\ref{Def:ANN:dimensions}).
\end{lemma}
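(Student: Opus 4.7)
The construction unpacks as
\begin{equation*}
\oplus_{k \in \{1,\ldots,n\}} \Phi_k
= \compANN{\sumANN_{\outDimANN(\Phi_1),n}}{\big(\compANN{\parallelizationSpecial_n(\Phi_1,\ldots,\Phi_n)}{\extensionANN_{\inDimANN(\Phi_1),n}}\big)},
\end{equation*}
so the plan is to read off the length, dimensions, and realization by combining the three pieces, whose properties are already in hand: $\extensionANN$ from Lemma~\ref{lem:ANN:extension}, $\sumANN$ from Lemma~\ref{lem:def:ANNsum}, and $\parallelizationSpecial_n$ from \cite[Lemma~2.18 and Proposition~2.19]{GrohsHornungJentzen2019}. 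Compatibility of $\compANN{}{}$ is needed because $\extensionANN_{\inDimANN(\Phi_1),n}$ has output dimension $n\inDimANN(\Phi_1)$, which matches the input dimension of $\parallelizationSpecial_n(\Phi_1,\ldots,\Phi_n)$ (using that every $\Phi_k$ has the same $\inDimANN$), and the output dimension $n\outDimANN(\Phi_1)$ of the parallelization matches the input of $\sumANN_{\outDimANN(\Phi_1),n}$.

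For items~\eqref{item:sum:ANNs:1} and \eqref{item:sum:ANNs:2}, note that $\extensionANN$ and $\sumANN$ both have length $1$, while $\parallelizationSpecial_n(\Phi_1,\ldots,\Phi_n)$ has length $\lengthANN(\Phi_1)$ and dimension vector
\begin{equation*}
\big(n\inDimANN(\Phi_1),\, \smallsum_k \dimANNlevel_1(\Phi_k),\, \ldots,\, \smallsum_k \dimANNlevel_{\lengthANN(\Phi_1)-1}(\Phi_k),\, n\outDimANN(\Phi_1)\big)
\end{equation*}
by \cite[Lemma~2.18]{GrohsHornungJentzen2019}. Inspecting Definition~\ref{Definition:ANNcomposition}, composing a length-$1$ ANN on either side of a longer ANN folds its weight matrix and bias into the adjacent layer without adding new layers, so the length remains $\lengthANN(\Phi_1)$, and the composition only replaces the outermost dimensions $n\inDimANN(\Phi_1)$ and $n\outDimANN(\Phi_1)$ by $\inDimANN(\Phi_1)$ and $\outDimANN(\Phi_1)$ respectively, while leaving the hidden dimensions untouched. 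This yields item~\eqref{item:sum:ANNs:2} and hence item~\eqref{item:sum:ANNs:1}.

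For items~\eqref{item:sum:ANNs:3} and \eqref{item:sum:ANNs:4}, I would chain the realizations using the composition compatibility \cite[Proposition~2.6]{GrohsHornungJentzen2019}: for any $a\in C(\R,\R)$ and $x\in\R^{\inDimANN(\Phi_1)}$, Lemma~\ref{lem:ANN:extension} gives $(\functionANN(\extensionANN_{\inDimANN(\Phi_1),n}))(x)=(x,x,\ldots,x)$; then \cite[Proposition~2.19]{GrohsHornungJentzen2019} applied to the parallelization produces $((\functionANN(\Phi_1))(x),\ldots,(\functionANN(\Phi_n))(x))$; and finally Lemma~\ref{lem:def:ANNsum} sums these to $\smallsum_{k=1}^n (\functionANN(\Phi_k))(x)$. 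Continuity and the image space $C(\R^{\inDimANN(\Phi_1)},\R^{\outDimANN(\Phi_1)})$ fall out from the same composition rule together with items~\eqref{item:ANNsum:comp:left:1} of Lemma~\ref{lem:def:ANNsum:comp:left} and \eqref{item:ANN:extension:comp:left:1} of Lemma~\ref{lem:ANN:extension:comp:left}.

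The whole proof is essentially bookkeeping; the only point requiring care is consistent tracking of dimensions through the two compositions with length-$1$ ANNs, since Definition~\ref{Definition:ANNcomposition} splits into four cases. Once this is verified (and since $\lengthANN(\Phi_1)\ge 1$, only the cases $L>1=\mathscr{L}$ and $L=1<\mathscr{L}$ are relevant), the rest follows mechanically.
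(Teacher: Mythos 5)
Your proposal follows the same route as the paper's proof: unpack $\oplus_{k}\Phi_k$ into the three-fold composition $\compANN{\sumANN_{\outDimANN(\Phi_1),n}}{\compANN{\parallelizationSpecial_n(\Phi_1,\ldots,\Phi_n)}{\extensionANN_{\inDimANN(\Phi_1),n}}}$, read off the dimension vector from \cite[Lemma~2.18]{GrohsHornungJentzen2019} together with the length-one factors, and chain the realizations through Lemma~\ref{lem:ANN:extension}, \cite[Proposition~2.19]{GrohsHornungJentzen2019}, and Lemma~\ref{lem:def:ANNsum}; the only stylistic difference is that you unpack Definition~\ref{Definition:ANNcomposition} by hand where the paper packages this step via \cite[item~(i) in Proposition~2.6]{GrohsHornungJentzen2019}. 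Two small slips worth flagging: the lemma you want for the extension composed on the right is Lemma~\ref{lem:ANN:extension:comp:right}, not Lemma~\ref{lem:ANN:extension:comp:left}, and your claim that only the cases $L>1=\mathscr{L}$ and $L=1<\mathscr{L}$ of Definition~\ref{Definition:ANNcomposition} arise misses the case $L=1=\mathscr{L}$, which occurs in both compositions when $\lengthANN(\Phi_1)=1$ (it causes no harm, but should be covered for completeness).
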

\begin{proof}[Proof of Lemma~\ref{lem:sum:ANNs}]
First, note that, e.g.,  \cite[Lemma~2.18]{GrohsHornungJentzen2019} proves that
\begin{equation}
\label{eq:lem:sum:par}
\begin{split}
&\dims \big( \mathbf{P}_n(\Phi_1,\Phi_2,\dots, \Phi_n) \big)\\
&= \big( \smallsum_{k=1}^n \dimANNlevel_0(\Phi_k), \smallsum_{k = 1}^n \dimANNlevel_1(\Phi_k), \ldots,
\smallsum_{k=1}^n \dimANNlevel_{\lengthANN(\Phi_1)-1}(\Phi_k), \smallsum_{k=1}^n \dimANNlevel_{\lengthANN(\Phi_1)}(\Phi_k)\big)\\
& = \big(n \inDimANN(\Phi_1), \smallsum_{k=1}^n \dimANNlevel_1(\Phi_k), \smallsum_{k = 1}^n \dimANNlevel_2(\Phi_k), \ldots, \smallsum_{k=1}^n \dimANNlevel_{\lengthANN(\Phi_1)-1}(\Phi_k), n \outDimANN(\Phi_1)\big)
\end{split}
\end{equation}
(cf.~Definition~\ref{Definition:simpleParallelization}).
Moreover, observe that item~\eqref{item:ANNsum:2} in Lemma~\ref{lem:def:ANNsum} ensures that
\begin{equation}
\label{eq:lem:sum:dims}
\dims\big(\sumANN_{\outDimANN(\Phi_1), n} \big) = (n\outDimANN(\Phi_1), \outDimANN(\Phi_1))
\end{equation}
(cf.~Definition~\ref{Definition:ANN:sum}).
This, \eqref{eq:lem:sum:par}, and, e.g.,  \cite[item~(i) in Proposition~2.6]{GrohsHornungJentzen2019}  demonstrate that
\begin{equation}
\label{eq:lem:sum:comp}
\begin{split}
&\dims \big(\compANN{\sumANN_{\outDimANN(\Phi_1), n}}{\big[\mathbf{P}_n(\Phi_1,\Phi_2,\dots, \Phi_n)\big]} \big)\\
& = \big(n \inDimANN(\Phi_1), \smallsum_{k=1}^n \dimANNlevel_1(\Phi_k), \smallsum_{k = 1}^n \dimANNlevel_2(\Phi_k), \ldots, \smallsum_{k=1}^n \dimANNlevel_{\lengthANN(\Phi_1)-1}(\Phi_k),  \outDimANN(\Phi_1)\big).
\end{split}
\end{equation}
Next note that item~\eqref{item:ANN:extension:2} in Lemma~\ref{lem:ANN:extension} assures that
\begin{equation}
\dims \big( \extensionANN_{\inDimANN(\Phi_1), n}\big) = (\inDimANN(\Phi_1), n \inDimANN(\Phi_1))
\end{equation}
(cf.~Definition~\ref{Definition:ANN:extension}).
Combining this, \eqref{eq:lem:sum:comp}, and,  e.g.,  \cite[item~(i) in Proposition~2.6]{GrohsHornungJentzen2019} proves that
\begin{align}
\begin{split}
&\dims(\oplus_{k \in \{1, 2, \ldots, n\}} \Phi_k) \\
& = \dims \big( \compANN{\sumANN_{\outDimANN(\Phi_1), n}}{{\compANN{\big[\mathbf{P}_n(\Phi_1,\Phi_2,\dots, \Phi_n)\big]}{\extensionANN_{\inDimANN(\Phi_1), n}}}}\big)\\
&= \big(\inDimANN(\Phi_1), \smallsum_{k=1}^n \dimANNlevel_1(\Phi_k), \smallsum_{k = 1}^n \dimANNlevel_2(\Phi_k), \ldots, \smallsum_{k=1}^n \dimANNlevel_{\lengthANN(\Phi_1)-1}(\Phi_k), \outDimANN(\Phi_1)\big).
\end{split}
\end{align}
This establishes items~\eqref{item:sum:ANNs:1}--\eqref{item:sum:ANNs:2}.
Next observe that  Lemma~\ref{lem:ANN:extension:comp:right} and \eqref{eq:lem:sum:par} ensure that
for all $a \in C(\R, \R)$, $x \in \R^{\inDimANN(\Phi_1)}$ it holds that  $\functionANN(\compANN{[\mathbf{P}_n(\Phi_1,\Phi_2,\dots, \Phi_n)]}{\extensionANN_{\inDimANN(\Phi_1), n}}) \in C(\R^{\inDimANN(\Phi_1)}, \R^{n \outDimANN(\Phi_1)}) $ and
\begin{equation}
\begin{split}
&\big(\functionANN\big(\compANN{[\mathbf{P}_n(\Phi_1,\Phi_2,\dots, \Phi_n)]}{\extensionANN_{\inDimANN(\Phi_1), n}}\big)\big) (x)\\
&= \big(\functionANN\big(\mathbf{P}_n(\Phi_1,\Phi_2,\dots, \Phi_n)\big)\big)(x, x, \ldots, x).
\end{split}
\end{equation}
Combining this with, e.g.,  \cite[item~(ii) in Proposition~2.19]{GrohsHornungJentzen2019} proves that
for all $a \in C(\R, \R)$, $x \in \R^{\inDimANN(\Phi_1)}$ it holds that  
\begin{equation}
\begin{split}
&\big(\functionANN\big(\compANN{[\mathbf{P}_n(\Phi_1,\Phi_2,\dots, \Phi_n)]}{\extensionANN_{\inDimANN(\Phi_1), n}}\big)\big) (x)\\
&= \big( (\functionANN (\Phi_1))(x), (\functionANN (\Phi_2))(x), \ldots, (\functionANN (\Phi_n))(x) \big) \in \R^{n \outDimANN(\Phi_1)}.
\end{split}
\end{equation}
Lemma~\ref{lem:def:ANNsum:comp:left}, \eqref{eq:lem:sum:dims}, and,  e.g.,  \cite[Lemma~2.8]{GrohsHornungJentzen2019} therefore demonstrate that for all $a \in C(\R, \R)$, $x \in \R^{\inDimANN(\Phi_1)}$ it holds that  $\mathcal{R}_{a}(\oplus_{k \in \{1, 2, \ldots, n\}} \Phi_k) \in C(\R^{\inDimANN(\Phi_1)}, \R^{\outDimANN(\Phi_1)})$ and 
\begin{equation}
\begin{split}
&\big(\mathcal{R}_{a} (\oplus_{k \in \{1, 2, \ldots, n\}} \Phi_k ) \big) (x)\\
& = \big(\functionANN\big(\compANN{\sumANN_{\outDimANN(\Phi_1), n}}{{\compANN{[\mathbf{P}_n(\Phi_1,\Phi_2,\dots, \Phi_n)]}{\extensionANN_{\inDimANN(\Phi_1), n}}}}\big)\big) (x) = \sum_{k=1}^n( \mathcal{R}_a(\Phi_k)) (x).
\end{split}
\end{equation}
This establishes items~\eqref{item:sum:ANNs:3}--\eqref{item:sum:ANNs:4}.
The proof of Lemma~\ref{lem:sum:ANNs} is thus completed.
\end{proof}

\subsection{ANN representation results}

\begin{lemma}
	\label{lem:sum:ANN}
	Let 
	$ n \in \N $, $h_1, h_2, \ldots, h_n \in \R$,
	$
	 \Phi_1, \Phi_2, \ldots, \Phi_n \in \ANNs$ satisfy  that
	$\dims(\Phi_1) = \dims(\Phi_2) = \ldots = \dims(\Phi_n)$,  let $A_k \in \R^{\inDimANN(\Phi_1) \times (n \inDimANN(\Phi_1))}$, $k \in \{1, 2, \ldots, n\}$, satisfy  for all $k \in \{1, 2, \ldots, n\}$, $x = (x_i)_{i \in \{1, 2, \ldots, n\}} \in \R^{n \inDimANN(\phi_1)}$ that $A_k x = x_k$, and let $\Psi \in \ANNs$ satisfy that
	\begin{equation}
	\Psi = 	\oplus_{k \in \{1, 2, \ldots, n\}} ( h_k  \circledast ( \compANN{\Phi_k}{\matrixANN_{A_k}}))
	\end{equation}
	(cf.~Definition~\ref{Def:ANN}, 
		Definition~\ref{Definition:ANN:matrix},
	Definition~\ref{Definition:ANNscalar}, and
	Definition~\ref{Definition:ANNsum:same}).
Then 
\begin{enumerate}[(i)]
	\item\label{item:sum:ANN:1} it holds that
	\begin{equation}
	\dims (\Psi) = (n \inDimANN(\Phi_1), n\dimANNlevel_1(\Phi_1), n\dimANNlevel_2(\Phi_1), \ldots, n\dimANNlevel_{\lengthANN(\Phi_1)-1}(\Phi_1), \outDimANN (\Phi_1)),
	\end{equation}
	\item\label{item:sum:ANN:2} it holds that $\paramANN(\Psi) \leq n^2 \paramANN(\Phi_1)$,
		\item\label{item:sum:ANN:3} it holds for all 	$ a \in C(\R, \R)$ that  $\functionANN(\Psi) \in C(\R^{n \inDimANN(\Phi_1)}, \R^{\outDimANN(\Phi_1)})$, and
	\item\label{item:sum:ANN:4} it holds for all 	$ a \in C(\R, \R)$, $x = (x_k)_{k \in \{1, 2, \ldots, n\}} \in \R^{n \inDimANN(\Phi_1)}$ that  
	\begin{equation}
	(\functionANN (\Psi))(x) = \sum_{k=1}^n h_k (\functionANN (\Phi_k))(x_k)
	\end{equation}
\end{enumerate}	
(cf.~Definition~\ref{Definition:ANNrealization} and Definition~\ref{Def:ANN:dimensions}).
\end{lemma}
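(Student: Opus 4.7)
The plan is to decompose $\Psi$ into three layers of previously-established constructions---matrix-composition, scalar multiplication, and sum---and to read off the dimensions, parameter count, and realization step by step using the lemmas of Subsections \ref{subsec:linear}--\ref{subsec:sums}.

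First I would identify the ``inner'' ANNs $\Xi_k := h_k \circledast (\compANN{\Phi_k}{\matrixANN_{A_k}})$ for $k \in \{1,2,\dots,n\}$. By item \eqref{item:ANN:matrix:2} of Lemma~\ref{lem:ANN:matrix} the network $\matrixANN_{A_k}$ has dimensions $(n\inDimANN(\Phi_1), \inDimANN(\Phi_1))$, so the composition is well-defined, and applying the composition formula (Definition~\ref{Definition:ANNcomposition} in the case $L > 1 = \mathscr{L}$, together with, e.g., \cite[Proposition~2.6]{GrohsHornungJentzen2019}) together with Lemma~\ref{lem:ANNscalar} yields
\begin{equation*}
\dims(\Xi_k) = \big(n\inDimANN(\Phi_1), \dimANNlevel_1(\Phi_1), \dimANNlevel_2(\Phi_1), \ldots, \dimANNlevel_{\lengthANN(\Phi_1)-1}(\Phi_1), \outDimANN(\Phi_1)\big).
\end{equation*}
In particular every $\Xi_k$ has the same length, input dimension, and output dimension, so the sum $\Psi = \oplus_{k=1}^n \Xi_k$ is well-defined in the sense of Definition~\ref{Definition:ANNsum:same}, and Lemma~\ref{lem:sum:ANNs} immediately yields item~\eqref{item:sum:ANN:1} since $\sum_{k=1}^n \dimANNlevel_j(\Xi_k) = n\,\dimANNlevel_j(\Phi_1)$ for every $j \in \{1,2,\ldots,\lengthANN(\Phi_1)-1\}$.

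For the realization formulas (items~\eqref{item:sum:ANN:3}--\eqref{item:sum:ANN:4}) I would chain together the explicit formulas from Lemmas \ref{lem:ANN:matrix:comp}, \ref{lem:ANNscalar}, and \ref{lem:sum:ANNs}: for every $a \in C(\R,\R)$ and $x = (x_k)_{k=1}^n \in \R^{n\inDimANN(\Phi_1)}$,
\begin{equation*}
(\functionANN(\Xi_k))(x) = h_k\,(\functionANN(\Phi_k))(A_k x) = h_k\,(\functionANN(\Phi_k))(x_k),
\end{equation*}
so item~\eqref{item:sum:ANNs:4} of Lemma~\ref{lem:sum:ANNs} gives $(\functionANN(\Psi))(x) = \sum_{k=1}^n h_k (\functionANN(\Phi_k))(x_k)$, proving both items \eqref{item:sum:ANN:3} and \eqref{item:sum:ANN:4}.

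The only slightly non-trivial step is the parameter bound in item~\eqref{item:sum:ANN:2}. Writing $L = \lengthANN(\Phi_1)$ and $l_j = \dimANNlevel_j(\Phi_1)$ for $j \in \{0,1,\ldots,L\}$, the formula from item~\eqref{item:sum:ANN:1} together with Definition~\ref{Def:ANN} gives
\begin{equation*}
\paramANN(\Psi) = (nl_1)(nl_0 + 1) + \sum_{j=2}^{L-1}(nl_j)(nl_{j-1}+1) + l_L(nl_{L-1}+1),
\end{equation*}
which I would compare term-by-term with $\paramANN(\Phi_1) = \sum_{j=1}^L l_j(l_{j-1}+1)$. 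Since $nl_{j-1} + 1 \leq n(l_{j-1}+1)$ for every $n \geq 1$, each of the first $L-1$ summands is bounded by $n^2 \cdot l_j(l_{j-1}+1)$ and the last summand is bounded by $n \cdot l_L(l_{L-1}+1)$, so the total is at most $n^2\paramANN(\Phi_1)$. I do not expect any real obstacle here; the only care needed is to separately handle the first and last layers in the dimension vector (since they do not carry the factor $n$ on the output side), and to cover the degenerate case $L = 1$ in which only the last-layer term appears.
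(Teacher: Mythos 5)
Your proposal follows essentially the same route as the paper: decompose $\Psi$ as $\oplus_k \Xi_k$ with $\Xi_k = h_k \circledast(\compANN{\Phi_k}{\matrixANN_{A_k}})$, read off the dimensions via Lemma~\ref{lem:ANN:matrix}, the composition rule, Lemma~\ref{lem:ANNscalar}, and Lemma~\ref{lem:sum:ANNs}, and chain Lemmas~\ref{lem:ANN:matrix:comp}, \ref{lem:ANNscalar}, and \ref{lem:sum:ANNs} for the realization. The only difference is that the paper states the parameter bound $\paramANN(\Psi)\leq n^2\paramANN(\Phi_1)$ as an immediate consequence of item~\eqref{item:sum:ANN:1}, whereas you spell out the term-by-term comparison using $nl_{j-1}+1\leq n(l_{j-1}+1)$; this is a correct and arguably welcome elaboration of a step the paper leaves implicit, but it is not a different argument.
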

\begin{proof}[Proof of Lemma~\ref{lem:sum:ANN}]
First, note that item~\eqref{item:ANN:matrix:2} in Lemma~\ref{lem:ANN:matrix} ensures for all 
$k \in \{1, 2, \ldots, n\}$ that
 \begin{equation}
\dims (\matrixANN_{A_k}) = (n \inDimANN(\Phi_1),  \inDimANN(\Phi_1)) \in \N^2.
\end{equation}
This and, e.g., \cite[item~(i) in Proposition~2.6]{GrohsHornungJentzen2019} prove for all 
$k \in \{1, 2, \ldots, n\}$ that
\begin{eqnarray}
\dims ( \compANN{\Phi_k}{\matrixANN_{A_k}}) = (n \inDimANN(\Phi_1), \dimANNlevel_1(\Phi_k), \dimANNlevel_2(\Phi_k), \ldots, \dimANNlevel_{\lengthANN(\Phi_k)}(\Phi_k)).
\end{eqnarray}
Item~\eqref{item:ANN:scalar:1} in Lemma~\ref{lem:ANNscalar} therefore demonstrates for all $k \in \{1, 2, \ldots, n\}$ that
\begin{equation}
\label{eq:lem:resultsum:dims}
\begin{split}
\dims ( h_k  \circledast ( \compANN{\Phi_k}{\matrixANN_{A_k}})) &= \dims (\compANN{\Phi_k}{\matrixANN_{A_k}}) \\
&= (n \inDimANN(\Phi_1), \dimANNlevel_1(\Phi_k), \dimANNlevel_2(\Phi_k), \ldots, \dimANNlevel_{\lengthANN(\Phi_k)-1}(\Phi_k), \outDimANN (\Phi_k))\\
& = (n \inDimANN(\Phi_1), \dimANNlevel_1(\Phi_1), \dimANNlevel_2(\Phi_1), \ldots, \dimANNlevel_{\lengthANN(\Phi_1)-1}(\Phi_1), \outDimANN (\Phi_1)).
\end{split}
\end{equation}
Combining this with 
item~\eqref{item:sum:ANNs:2} in  Lemma~\ref{lem:sum:ANNs} 
ensures that
\begin{equation}
\begin{split}
\dims (\Psi) &= \dims \big(\! \oplus_{k \in \{1, 2, \ldots, n\}} ( h_k  \circledast ( \compANN{\Phi_k}{\matrixANN_{A_k}}))\big)\\
& =  (n \inDimANN(\Phi_1), n\dimANNlevel_1(\Phi_1), n\dimANNlevel_2(\Phi_1), \ldots, n\dimANNlevel_{\lengthANN(\Phi_1)-1}(\Phi_1), \outDimANN (\Phi_1)).
\end{split}
\end{equation}
This establishes item~\eqref{item:sum:ANN:1}.
Hence, we obtain  that
\begin{equation}
\paramANN(\Psi)  \leq n^2 \paramANN (\Phi_1).
\end{equation}
This establishes item~\eqref{item:sum:ANN:2}.
Moreover, observe that items~\eqref{item:ANN:matrix:comp:3}--\eqref{item:ANN:matrix:comp:4} in Lemma~\ref{lem:ANN:matrix:comp} assure
 for all $k \in \{1, 2, \ldots, n\}$, $a \in C(\R, \R)$, $x = (x_i)_{i \in \{1, 2, \ldots, n\}} \in \R^{n \inDimANN(\Phi_1)}$  that 
 $\functionANN ( \compANN{\Phi_k}{\matrixANN_{A_k}}) \in C(\R^{n \inDimANN(\Phi_1)}, \R^{\outDimANN(\Phi_k)})$ and
 \begin{equation}
 \big(\functionANN ( \compANN{\Phi_k}{\matrixANN_{A_k}})\big) (x) = (\functionANN (\Phi))(A_k x) = (\functionANN (\Phi)) (x_k).
 \end{equation}
Combining this with items~\eqref{item:ANN:scalar:2}--\eqref{item:ANN:scalar:3} in Lemma~\ref{lem:ANNscalar} proves  for all $k \in \{1, 2, \ldots, n\}$, $a \in C(\R, \R)$, $x = (x_i)_{i \in \{1, 2, \ldots, n\}} \in \R^{n \inDimANN(\Phi_1)}$  that  
 $\functionANN ( h_k  \circledast ( \compANN{\Phi_k}{\matrixANN_{A_k}})) \in C(\R^{n \inDimANN(\Phi_1)}, \R^{\outDimANN(\Phi_1)})$ and
\begin{equation}
\big(\functionANN ( h_k  \circledast ( \compANN{\Phi_k}{\matrixANN_{A_k}}))\big) (x) = h_k (\functionANN (\Phi)) (x_k).
\end{equation}
Items~\eqref{item:sum:ANNs:3}--\eqref{item:sum:ANNs:4} in Lemma~\ref{lem:sum:ANNs} and \eqref{eq:lem:resultsum:dims} hence ensure for all  $a \in C(\R, \R)$, $x = (x_i)_{i \in \{1, 2, \ldots, n\}} \in \R^{n \inDimANN(\Phi_1)}$  that 
$\functionANN(\Psi) \in C(\R^{n \inDimANN(\Phi_1)}, \R^{\outDimANN(\Phi_1)})$  and
\begin{equation}
\label{eq:sum:ann}
\begin{split}
(\functionANN (\Psi))(x) & = \big( \functionANN \big(  \!	\oplus_{k \in \{1, 2, \ldots, n\}} ( h_k  \circledast ( \compANN{\Phi_k}{\matrixANN_{A_k}})) \big) \big)(x)\\
&=  \sum_{k = 1}^n \big(\functionANN ( h_k  \circledast ( \compANN{\Phi_k}{\matrixANN_{A_k}}))\big) (x) =  \sum_{k=1}^n h_k (\functionANN (\Phi_k))(x_k).
\end{split}
\end{equation}
This establishes items~\eqref{item:sum:ANN:3}--\eqref{item:sum:ANN:4}.
The proof of 	Lemma~\ref{lem:sum:ANN} is thus completed.
\end{proof}

\begin{lemma}
\label{lem:Composition_Sum}
Let $a\in C(\R,\R)$, $L_1, L_2\in\N$, $\mathbb{I}, \Phi_1,\Phi_2\in\ANNs$, $d,\hiddenDimId, l_{1,0},l_{1,1},\dots,\allowbreak l_{1,L_1},l_{2,0},l_{2,1},\allowbreak\dots,\allowbreak l_{2,L_2}\in\N$	
satisfy  for all $k\in\{1,2\}$, $x\in\R^{d}$ that $2\le\hiddenDimId\le 2d$,
$l_{2,L_2-1}\le l_{1,L_1-1}+\hiddenDimId$,
$\dims(\mathbb{I}) = (d,\hiddenDimId,d)$, $(\functionANN(\mathbb{I}))(x)=x$,  
$\inDimANN(\Phi_k)=\outDimANN(\Phi_k)=d$,
and
$\dims(\Phi_k)=(l_{k,0},l_{k,1},\dots, l_{k,L_k})$
(cf.\ Definition~\ref{Def:ANN} and Definition~\ref{Definition:ANNrealization}).
Then there exists $\Psi\in \ANNs$ such that
\begin{enumerate}[(i)]
\item \label{Composition_Sum:Realization}
it holds that $\functionANN(\Psi)\in C(\R^d,\R^d)$,
\item \label{Composition_Sum:Function}
it holds for all $x\in\R^d$  that
\begin{equation}
(\functionANN(\Psi))(x)=(\functionANN(\Phi_2))(x)+\big((\functionANN(\Phi_1))\circ (\functionANN(\Phi_2))\big)(x),
\end{equation}
\item \label{Composition_Sum:Dims}
it holds that
\begin{equation}
\dimANNlevel_{\lengthANN(\Psi) -1} (\Psi) \leq l_{1, L_1 -1} + \mathfrak{i},
\end{equation}
and
\item \label{Composition_Sum:Params}
it holds that $\paramANN(\Psi)
\le   
\paramANN(\Phi_2)+\big[\tfrac{1}{2}\paramANN(\mathbb{I})+\paramANN(\Phi_1)\big]^{\!2}$
\end{enumerate}
(cf.~Definition~\ref{Definition:ANNcomposition} and Definition~\ref{Def:ANN:dimensions}).
\end{lemma}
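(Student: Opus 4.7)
The plan is to realize $\Psi$ as a residual connection sitting on top of $\Phi_2$. Concretely, when $L_1\geq 2$, I would first iterate $\mathbb{I}$ to build an $L_1$-layer identity block on $\R^d$: set $\mathcal{I}:=\compANN{\mathbb{I}}{(\compANN{\mathbb{I}}{(\ldots\compANN{\mathbb{I}}{\mathbb{I}})})}$ with $L_1-1$ factors (cf.\ Definition~\ref{Definition:ANNcomposition}). An induction on $L_1$, using the composition dimension/realization rules and the identity realization of $\mathbb{I}$, shows that $\dims(\mathcal{I})=(d,\hiddenDimId,\ldots,\hiddenDimId,d)$ with $L_1-1$ copies of $\hiddenDimId$ in the middle and $(\functionANN(\mathcal{I}))(x)=x$ for all $x\in\R^d$. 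Since $\lengthANN(\mathcal{I})=L_1=\lengthANN(\Phi_1)$ and $\inDimANN(\mathcal{I})=\outDimANN(\mathcal{I})=d=\inDimANN(\Phi_1)=\outDimANN(\Phi_1)$, I can form $\Xi:=\mathcal{I}\oplus\Phi_1$ via Definition~\ref{Definition:ANNsum:same}, and then set $\Psi:=\compANN{\Xi}{\Phi_2}$. The edge case $L_1=1$ is handled separately: $\Phi_1=(W,B)$ is a single affine map, so I absorb it into the last layer of $\Phi_2=((W'_k,B'_k))_{k=1}^{L_2}$ by replacing $(W'_{L_2},B'_{L_2})$ with $((\idMatrix_d+W)W'_{L_2},(\idMatrix_d+W)B'_{L_2}+B)$; this gives $\paramANN(\Psi)=\paramANN(\Phi_2)$ and all four claims hold trivially in view of $l_{2,L_2-1}\leq l_{1,L_1-1}+\hiddenDimId$.

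For the $L_1\geq 2$ construction, items~\eqref{Composition_Sum:Realization}--\eqref{Composition_Sum:Function} fall out by chaining Lemma~\ref{lem:sum:ANNs} with the composition-realization identity (Proposition~2.6 of \cite{GrohsHornungJentzen2019}): Lemma~\ref{lem:sum:ANNs} gives $(\functionANN(\Xi))(y)=y+(\functionANN(\Phi_1))(y)$ for $y\in\R^d$, and composition yields $(\functionANN(\Psi))(x)=(\functionANN(\Phi_2))(x)+(\functionANN(\Phi_1))((\functionANN(\Phi_2))(x))$. For \eqref{Composition_Sum:Dims}, Lemma~\ref{lem:sum:ANNs} also computes $\dims(\Xi)=(d,\hiddenDimId+l_{1,1},\ldots,\hiddenDimId+l_{1,L_1-1},d)$, and the composition dimension rule then produces $\dims(\Psi)=(d,l_{2,1},\ldots,l_{2,L_2-1},\hiddenDimId+l_{1,1},\ldots,\hiddenDimId+l_{1,L_1-1},d)$, so $\dimANNlevel_{\lengthANN(\Psi)-1}(\Psi)=\hiddenDimId+l_{1,L_1-1}$, which is \eqref{Composition_Sum:Dims}.

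The main obstacle is the parameter bound~\eqref{Composition_Sum:Params}. Splitting $\paramANN(\Psi)$ at the $\Phi_2$-to-$\Xi$ junction $k=L_2$, the lower part equals $\sum_{k=1}^{L_2-1}l_{2,k}(l_{2,k-1}+1)\leq\paramANN(\Phi_2)$, while the upper part is $(\hiddenDimId+l_{1,1})(l_{2,L_2-1}+1)+\sum_{k=2}^{L_1-1}(\hiddenDimId+l_{1,k})(\hiddenDimId+l_{1,k-1}+1)+d(\hiddenDimId+l_{1,L_1-1}+1)$. Here I would invoke the hypothesis $l_{2,L_2-1}\leq l_{1,L_1-1}+\hiddenDimId$ to replace the troublesome factor $l_{2,L_2-1}+1$ by $\hiddenDimId+l_{1,L_1-1}+1$, together with the elementary estimates $\hiddenDimId^2\leq 2d\hiddenDimId\leq\paramANN(\mathbb{I})$ (using $\hiddenDimId\leq 2d$), $\tfrac{1}{2}\paramANN(\mathbb{I})\geq d\hiddenDimId\geq\max\{d,\hiddenDimId\}$, and $\paramANN(\Phi_1)\geq\max_{1\leq k\leq L_1}l_{1,k}(l_{1,k-1}+1)\geq L_1$. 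Each remaining cross-term in the upper part then factors as a product of two nonnegative quantities, each bounded by either $\tfrac{1}{2}\paramANN(\mathbb{I})$ or $\paramANN(\Phi_1)$, so by the identity $(a+b)^2=a^2+2ab+b^2$ applied to $a=\tfrac{1}{2}\paramANN(\mathbb{I})$ and $b=\paramANN(\Phi_1)$ the entire upper part is majorised by $[\tfrac{1}{2}\paramANN(\mathbb{I})+\paramANN(\Phi_1)]^2$. Adding the lower-part bound yields the claim. This final algebraic telescoping, particularly the handling of the $(L_1-2)\hiddenDimId^2$ contribution and the $l_{2,L_2-1}$ junction term, is the technically delicate step of the proof.
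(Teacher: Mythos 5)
Your construction is the same one that underlies the paper's proof: the paper invokes \cite[Propositions~2.28 and~2.30]{GrohsHornungJentzen2019} as black boxes, and those results are built by exactly the residual-connection recipe you describe, namely stacking $L_1-1$ copies of $\mathbb{I}$ to obtain an identity block $\mathcal{I}$ of the same length as $\Phi_1$, forming $\Xi=\mathcal{I}\oplus\Phi_1$, and then setting $\Psi=\compANN{\Xi}{\Phi_2}$, with a separate case when $L_1=1$. Your handling of items~\eqref{Composition_Sum:Realization}--\eqref{Composition_Sum:Dims} and of the $L_1=1$ case (absorbing the single affine map $\Phi_1$ into the last layer of $\Phi_2$, which is precisely what \cite[Proposition~2.30]{GrohsHornungJentzen2019} gives, with $\dims(\Psi)=\dims(\Phi_2)$) is correct, and the dimension formula you compute for $\Psi$ agrees with the one the paper cites.

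The weak link is item~\eqref{Composition_Sum:Params}, which is the whole substance of \cite[Proposition~2.28]{GrohsHornungJentzen2019}. You correctly isolate the upper part of the parameter count and correctly list the right primitive estimates ($\hiddenDimId^2\le 2d\hiddenDimId\le\paramANN(\mathbb{I})$, $\tfrac12\paramANN(\mathbb{I})\ge d\hiddenDimId\ge\max\{d,\hiddenDimId\}$, $\paramANN(\Phi_1)\ge 2L_1$), but the final step, that ``each remaining cross-term in the upper part factors as a product of two nonnegative quantities, each bounded by either $\tfrac12\paramANN(\mathbb{I})$ or $\paramANN(\Phi_1)$, so by $(a+b)^2=a^2+2ab+b^2$ the upper part is majorised by $[\tfrac12\paramANN(\mathbb{I})+\paramANN(\Phi_1)]^2$'' does not follow as stated. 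The upper part has on the order of $L_1$ summands; if each one were merely a product of two quantities each dominated by $\max\{a,b\}$, you would only get a bound like $L_1\cdot\max\{a,b\}^2$, which can exceed $(a+b)^2$. What actually saves the estimate is that the $(L_1-1)\hiddenDimId^2$ block must be charged against the cross-term $2ab$, not against $a^2$: one uses $L_1\le\tfrac12\paramANN(\Phi_1)$ and $\hiddenDimId^2\le\paramANN(\mathbb{I})$ to write $(L_1-1)\hiddenDimId^2\le\tfrac12\paramANN(\Phi_1)\cdot\paramANN(\mathbb{I})=ab$, and a similar careful ledger is needed for the two $\hiddenDimId\cdot l_{1,\cdot}$ cross-blocks, the pure $l_{1,\cdot}$ block, the junction term, and the terminal $d$-layer. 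You name the right ingredients but never actually carry out this accounting; as written, \eqref{Composition_Sum:Params} is asserted rather than proved. Also, the chain $\paramANN(\Phi_1)\geq\max_{1\leq k\leq L_1}l_{1,k}(l_{1,k-1}+1)\geq L_1$ is wrong in its middle step (the maximal summand need not exceed $L_1$); what you want is $\paramANN(\Phi_1)=\sum_{k=1}^{L_1}l_{1,k}(l_{1,k-1}+1)\ge 2L_1$, which does hold and is the inequality you actually need.
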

\begin{proof}[Proof of Lemma~\ref{lem:Composition_Sum}]
To prove items~\eqref{Composition_Sum:Realization}--\eqref{Composition_Sum:Params} we distinguish between the case $L_1=1$ and the case $L_1 \in \N \cap [2, \infty)$. We first prove items~\eqref{Composition_Sum:Realization}--\eqref{Composition_Sum:Params} in the case $L_1=1$.  Note that, e.g., \cite[Proposition~2.30]{GrohsHornungJentzen2019} (with $a=a$, $d=d$, $\mathfrak{L} = L_2$, $(\ell_0, \ell_1, \ldots, \ell_{\mathfrak{L}}) = (l_{2,0},l_{2,1}, \ldots, l_{2,L_2})$, $\psi = \Phi_2$, $\phi_n = \Phi_1$ for $n \in \N_0$ in the notation of \cite[Proposition~2.30]{GrohsHornungJentzen2019}) implies that there exists $\Psi \in \ANNs$ such that
\begin{enumerate}[(I)]
	\item \label{item:sum:comp:1} it holds that $\functionANN(\Psi) \in C(\R^d, \R^d)$,
	\item \label{item:sum:comp:2} it holds for all $x \in \R^d$ that 
	\begin{equation}
	(\functionANN(\Psi))(x)=(\functionANN(\Phi_2))(x)+\big((\functionANN(\Phi_1))\circ (\functionANN(\Phi_2))\big)(x),
	\end{equation}
	 and
	\item \label{item:sum:comp:3}  it holds that $\dims(\Psi) = \dims(\Phi_2)$.
\end{enumerate}
The hypothesis that $l_{2,L_2-1}\le l_{1,L_1-1}+\hiddenDimId$ hence ensures that 
\begin{equation}
\label{eq:prop:sum:comp:par}
\dimANNlevel_{\lengthANN(\Psi) -1} (\Psi) = \dimANNlevel_{\lengthANN(\Phi_2) -1} (\Phi_2) = l_{2,L_2-1}   \leq l_{1, L_1 -1} + \mathfrak{i}.
\end{equation}
Moreover, note that \eqref{item:sum:comp:3} assures that
\begin{equation}
\paramANN(\Psi) = \paramANN(\Phi_2)
\le   
\paramANN(\Phi_2)+\big[\tfrac{1}{2}\paramANN(\mathbb{I})+\paramANN(\Phi_1)\big]^{\!2}.
\end{equation} 
Combining this with \eqref{item:sum:comp:1} and \eqref{eq:prop:sum:comp:par} establishes items~\eqref{Composition_Sum:Realization}--\eqref{Composition_Sum:Params} in the case $L_1=1$. We now prove items~\eqref{Composition_Sum:Realization}--\eqref{Composition_Sum:Params} in the case $L_1 \in \N \cap [2, \infty)$. Observe that, e.g., \cite[Proposition~2.28]{GrohsHornungJentzen2019} (with $a=a$,  $L_1 = L_1$, $L_2 = L_2$, $\mathbb{I} = \mathbb{I}$, $\Phi_1 = \Phi_1$, $\Phi_2 = \Phi_2$, 
$d=d$, $\mathfrak{i} = \mathfrak{i}$, $(l_{1, 0}, l_{1, 1}, \ldots, l_{1, L_1}) = (l_{1, 0}, l_{1, 1}, \ldots, l_{1, L_1})$, $(l_{2, 0}, l_{2, 1}, \ldots, l_{2, L_2}) = (l_{2, 0}, l_{2, 1}, \ldots, l_{2, L_2})$   in the notation of \cite[Proposition~2.28]{GrohsHornungJentzen2019}) proves that there exists $\Psi\in \ANNs$ such that
\begin{enumerate}[(a)]
\item it holds that $\functionANN(\Psi) \in C(\R^d, \R^d)$,
\item \label{CompositionSum:Realization}
it holds for all $x\in\R^d$  that
\begin{equation}
(\functionANN(\Psi))(x)=(\functionANN(\Phi_2))(x)+\big((\functionANN(\Phi_1))\circ (\functionANN(\Phi_2))\big)(x),
\end{equation}
\item \label{CompositionSum:Dims}
it holds that
\begin{equation}
\dims(\Psi)=(l_{2,0},l_{2,1},\dots, l_{2,L_2-1},l_{1,1}+\hiddenDimId,l_{1,2}+\hiddenDimId,\dots,l_{1,L_1-1}+\hiddenDimId, l_{1, L_1}),
\end{equation} 
and
\item \label{CompositionSum:Params}
it holds that $\paramANN(\Psi)
\le   
\paramANN(\Phi_2)+\big[\tfrac{1}{2}\paramANN(\mathbb{I})+\paramANN(\Phi_1)\big]^{\!2}$.
\end{enumerate}
This establishes items~\eqref{Composition_Sum:Realization}--\eqref{Composition_Sum:Params} in the case  $L_1 \in \N \cap [2, \infty)$. The proof of Lemma~\ref{lem:Composition_Sum} is thus completed.
\end{proof}

\section[Kolmogorov partial differential equations (PDEs)]{Kolmogorov partial differential equations (PDEs)}
\label{sec:Kolmogorov}

In this section we
 establish in Theorem~\ref{thm:dnn:kolmogorov} below
the existence of  DNNs which approximate  solutions of suitable Kolmogorov PDEs without the curse of dimensionality.
Moreover, in Corollary~\ref{cor:laplacian:lebesgue} below we specialize Theorem~\ref{thm:dnn:kolmogorov} to the case where 
  for every $ d \in \N $ we have that the probability measure $ \nu_d $ appearing in Theorem~\ref{thm:dnn:kolmogorov}  is the uniform distribution on the $ d $-dimensional unit cube $ [0,1]^d $. In addition,  in Corollary~\ref{cor:dnn:kolmogorov} below we specialize Theorem~\ref{thm:dnn:kolmogorov}, roughly speaking,  to the case where the constants 
  	$\kappa \in (0, \infty)$, $\ExponError, \ExponDim_1, \ExponDim_2, \ldots, \ExponDim_6 \in [0, \infty) $, which we use  to specify the regularity hypotheses in Theorem~\ref{thm:dnn:kolmogorov},   coincide. Corollary~\ref{cor:dnn:kolmogorov}  follows  immediately from Theorem~\ref{thm:dnn:kolmogorov} and  is a slight generalization of~\cite[Theorem~6.3]{JentzenSalimovaWelti2018} and \cite[Theorem~1.1]{JentzenSalimovaWelti2018}, respectively.
In our proof of Theorem~\ref{thm:dnn:kolmogorov} we 
employ the DNN representation results 
 in Lemmas~\ref{lem:sum:ANN}--\ref{lem:Composition_Sum} from Section~\ref{sec:calculus}  above
as well as
 essentially well-known error estimates for the Monte Carlo Euler method which we establish in Proposition~\ref{prop:monte_carlo_euler} in this section below. The proof Proposition~\ref{prop:monte_carlo_euler}, in turn,  employs  the elementary error estimate results  in Lemmas~\ref{lem:con_monte_carlo_euler}--\ref{lem:error:monte_carlo} below.

\subsection{Error analysis for the Monte Carlo Euler method}

\begin{lemma}[Weak perturbation error]
	\label{lem:con_monte_carlo_euler}
	Let $d, m \in \N$, 
	$\xi \in \R^d$,
	$T \in (0, \infty)$, 
	$L_0, L_1, l \in [0, \infty)$,
	$h \in (0, T]$,
	$B \in \R^{d \times m}$,
	let 
	$
	\left\| \cdot \right\| \colon \R^d \to [0,\infty)
	$
	be the $ d $-dimensional Euclidean norm, let $(\Omega, \mathcal{F}, \P)$ be a probability space, let $W \colon [0, T] \times \Omega \to \R^m$ be a standard Brownian motion, let $f_0 \colon \R^d \to \R$ and $f_1 \colon \R^d \to \R^d$ be  functions, let $\chi \colon [0, T] \to [0, T]$ be a function, assume for all $t  \in [0, T]$, $x, y \in \R^d$ that
	\begin{gather}
	\label{eq:lem:f_0:semi}
	|f_0(x) - f_0(y)| \leq L_0 \!\left( 1+ \int_0^1 [r \|x\| + (1-r) \|y\| ]^l \, dr \right)\! \|x-y\|,\\
	\label{eq:lem:f_1_lipschitz}
	\|f_1(x) - f_1(y) \| \leq L_1 \|x-y\|, 
	\end{gather}
	and $\chi(t) = \max(\{0, h, 2h, \ldots\} \cap [0, t])$, 
	and 
	let $ X, Y \colon [0,T] \times \Omega \to \R^d $ be stochastic processes with 
	continuous sample paths which satisfy
	for all  $ t \in [0,T] $ that 
	$
	Y_t = \xi + \int_0^t f_1\big( Y_{ \chi( s ) } \big) \, ds + B W_t
	$
	and
	\begin{equation}
	X_t =  \xi + \int_0^t f_1( X_s ) \, ds + B W_t
	.
	\end{equation}
	Then it holds that
	\begin{align*}
	&\big|\E[f_0(X_T)] - \E[f_0(Y_T)] \big| \leq (h/T)^{\nicefrac{1}{2}}  e^{(l+3+2L_1+[l L_1 + 2 L_1 +2]T)}  \max\{1, L_0\} \numberthis \\
	& \cdot \Big[ \|\xi\| + 2  + \max\{1, \|f_1(0)\|\} \max\{1, T\} + \sqrt{(2\max\{l, 1\} -1) \operatorname{Trace}(B^*B) T} \Big]^{1+l} .
	\end{align*}
\end{lemma}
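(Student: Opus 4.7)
My plan is to reduce the weak error to the strong error of the Euler scheme, exploiting the polynomial-growth-Lipschitz structure of $f_0$ in \eqref{eq:lem:f_0:semi}. By the triangle inequality and \eqref{eq:lem:f_0:semi},
\begin{equation}
|\E[f_0(X_T)] - \E[f_0(Y_T)]| \leq L_0 \E\!\left[\left(1 + \int_0^1 [r\|X_T\| + (1-r)\|Y_T\|]^l \, dr\right)\|X_T - Y_T\|\right].
\end{equation}
The polynomial-growth factor in the bracket is bounded, via convexity of $x\mapsto x^l$ when $l\ge 1$ and subadditivity when $l\in[0,1)$, by a constant depending on $l$ times $1 + \|X_T\|^l + \|Y_T\|^l$. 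The Cauchy--Schwarz inequality then splits the expectation into the $L^2$-moment of this growth factor and the strong error $(\E[\|X_T-Y_T\|^2])^{1/2}$.

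Next, I would derive the strong convergence rate $(\E[\|X_T - Y_T\|^2])^{1/2} \leq h^{1/2} \cdot C(L_1,T,\|\xi\|,\|f_1(0)\|,\operatorname{Trace}(B^*B))$. The standard argument uses the representation $X_t - Y_t = \int_0^t [f_1(X_s) - f_1(Y_{\chi(s)})]\,ds$ (the noise $BW_t$ cancels because both processes share it), splits $f_1(X_s) - f_1(Y_{\chi(s)}) = [f_1(X_s)-f_1(Y_s)] + [f_1(Y_s) - f_1(Y_{\chi(s)})]$, applies \eqref{eq:lem:f_1_lipschitz}, controls $\E[\|Y_s - Y_{\chi(s)}\|^2] \leq C h$ via the one-step Euler increment (using linear growth of $f_1$, moment bounds on $Y$, and $\E[\|B(W_t - W_{\chi(t)})\|^2] = (t-\chi(t))\operatorname{Trace}(B^*B)$), and finishes with Gronwall's inequality; this is where the factor $e^{[l L_1 + 2L_1+2]T}$ partly appears.

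The third ingredient is the a priori $L^{2\max\{l,1\}}$-moment bound on $X_T$ and $Y_T$. Applying Minkowski's inequality to $X_t = \xi + \int_0^t f_1(X_s)\,ds + BW_t$, using $\|f_1(u)\| \leq \|f_1(0)\| + L_1\|u\|$ from \eqref{eq:lem:f_1_lipschitz}, and combining with the Gaussian $p$-norm estimate $\|BW_T\|_{L^{2\max\{l,1\}}} \le ((2\max\{l,1\}-1)\operatorname{Trace}(B^*B)T)^{1/2}$ (which I recognize as the exact square-root term appearing in the conclusion), one obtains an inequality to which Gronwall applies, yielding the $\|\xi\| + \max\{1,\|f_1(0)\|\}\max\{1,T\}$ factor times an $e^{L_1 T}$ prefactor; the same bound applies to $Y$ after noting that $Y_t = \xi + \int_0^t f_1(Y_{\chi(s)})\,ds + BW_t$ admits the identical estimate.

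Assembling the three ingredients, the $L_0$ factor and the constant $c(l)$ enter as the $\max\{1,L_0\}$ and $(\cdot)^{1+l}$ terms, while the three Gronwall applications combine to give the exponential prefactor $e^{(l+3+2L_1+[lL_1 + 2L_1+2]T)}$. The main technical obstacle is bookkeeping: threading a single explicit constant through the three Gronwall applications and handling the regimes $l\in[0,1)$ and $l\ge 1$ uniformly via the $\max\{l,1\}$ device so that the final constant matches the one displayed in the statement; the dependence on $h$ comes purely from the one-step Euler increment in the strong error step, giving the advertised $(h/T)^{1/2}$ rate.
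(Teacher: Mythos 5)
Your plan is mathematically sound and would, with careful bookkeeping, yield a bound of the stated form, but it takes a genuinely different route from the paper. The paper's own proof is essentially a citation: after first observing that \eqref{eq:lem:f_1_lipschitz} gives the linear growth $\|f_1(x)\| \le L_1\|x\| + \|f_1(0)\|$, it invokes \cite[Proposition~4.6]{JentzenSalimovaWelti2018} as a black box for the weak-error estimate (taking the test function $\phi_0 = f_0$ and all perturbation parameters zero), then substitutes in the Gaussian moment bound $(\E[\|BW_T\|^{\max\{2,2l\}}])^{1/\max\{2,2l\}} \le \sqrt{(2\max\{l,1\}-1)\operatorname{Trace}(B^*B)T}$ via \cite[Lemma~4.2]{JentzenSalimovaWelti2018}. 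Your proposal instead re-derives from scratch the content that the cited Proposition~4.6 packages: the polynomial-growth-Lipschitz $\Rightarrow$ Cauchy--Schwarz decomposition into moment factor times strong $L^2$ error, the $h^{1/2}$ strong rate via the increment estimate and Gronwall, and the Minkowski/Gronwall a~priori moment bounds. Both routes rest on the same underlying mathematics (the stated rate is the strong rate, not the optimal $O(h)$ weak rate, precisely because $f_0$ is only Lipschitz with polynomial growth — your reduction is therefore the natural one). Your approach has the advantage of self-containment and makes the mechanism visible, at the cost of exactly the bookkeeping you flag; the paper pays for brevity by offloading that bookkeeping (and the precise form of the exponential prefactor $e^{(l+3+2L_1+[lL_1+2L_1+2]T)}$) to the external reference. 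The one point to double-check if you carry this through in full: the moment bound on $Y_t$ must be established for the piecewise-drift process (drift evaluated at $\chi(s)$), which needs a discrete-in-time Gronwall on the grid points rather than the continuous Gronwall you use for $X$; your sketch acknowledges this but elides the distinction.
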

\begin{proof}[Proof of Lemma~\ref{lem:con_monte_carlo_euler}]
	
	First, note that \eqref{eq:lem:f_1_lipschitz} proves that for all $x \in \R^d$ it holds that
	\begin{equation}
	\|f_1(x) \| \leq \|f_1(x) - f_1(0) \| + \|f_1(0)\| \leq L_1 \|x\| + \|f_1(0)\|.
	\end{equation}
	This, \eqref{eq:lem:f_0:semi}, \eqref{eq:lem:f_1_lipschitz}, and, e.g.,  \cite[Proposition~4.6]{JentzenSalimovaWelti2018} (with $d = d$, $m = m$, $\xi = \xi$, $T = T$, $c  = L_1$, $C = \|f_1(0)\|$, $\varepsilon_0 = 0$, $\varepsilon_1 = 0$, $\varepsilon_2 = 0$,  $\varsigma_0 = 0$, $\varsigma_1 = 0$, $\varsigma_2 = 0$, $L_0 = L_0$, $L_1 = L_1$, $l = l$, $h = h$, $B = B$, $p = 2$, $q = 2$, $\left\| \cdot \right\| = \left\| \cdot \right\|$, $(\Omega, \mathcal{F}, \P) = (\Omega, \mathcal{F}, \P)$, $W = W$, $\phi_0 = f_0$, $f_1 = f_1$, $\phi_2 = (\R^d \ni x \mapsto x \in \R^d)$, $\chi = \chi$, $f_0 = f_0$, $\phi_1 = \phi_1$, $\varpi_r = 
	(
	\E[ 
	\| B W_T \|^r])^{ \nicefrac{1}{r} }$, $X = X$, $Y = Y$ for $r \in (0, \infty)$ in the notation of \cite[Proposition~4.6]{JentzenSalimovaWelti2018}) establish that
	\begin{equation}
	\begin{split}
	&\big|\E[f_0(X_T)] - \E[f_0(Y_T)] \big| \leq (h/T)^{\nicefrac{1}{2}}  e^{(l+3+2L_1+[l L_1 + L_1 + L_1 +2]T)} \max\{1, L_0\} \\
	& \cdot \Big[ \|\xi\| + 2 + \max\{1, \|f_1(0)\|\} \max\{1, T\} + \big(
	\E\big[ 
	\| B W_T \|^{\max\{2, 2l\}}
	\big]
	\big)^{ \nicefrac{1}{\max\{2, 2l\}} }    \Big]^{1+l}.
	\end{split}
	\end{equation}
	Combining this with, e.g., \cite[Lemma~4.2]{JentzenSalimovaWelti2018} (with $d = d$, $m = m$, $T = T$, $p = \max\{2, 2l\}$, $B = B$, $\left\| \cdot \right\| = \left\| \cdot \right\|$, $(\Omega, \mathcal{F}, \P) = (\Omega, \mathcal{F}, \P)$, $W = W$ in the notation of \cite[Lemma~4.2]{JentzenSalimovaWelti2018}) ensures that 
	\begin{align*}
	&\big|\E[f_0(X_T)] - \E[f_0(Y_T)] \big| \leq (h/T)^{\nicefrac{1}{2}}  e^{(l+3+2L_1+[l L_1 + 2 L_1 +2]T)}  \max\{1, L_0\} \numberthis \\
	& \cdot \Big[ \|\xi\| + 2  + \max\{1, \|f_1(0)\|\} \max\{1, T\} + \sqrt{(2\max\{l, 1\} -1) \operatorname{Trace}(B^*B) T} \Big]^{1+l} .
	\end{align*}
	The proof of Lemma~\ref{lem:con_monte_carlo_euler} is thus completed.
\end{proof}

\begin{lemma}
	\label{lem:integral:weak:euler}
	Let $d, m \in \N$, $T, \kappa \in (0, \infty)$,
	$\theta, \ExponDim_0, \ExponDim_1 \in [0, \infty)$,	
	$h \in (0, T]$, $B \in \R^{d \times m}$, $p \in [1, \infty)$,
	let $\nu  \colon \mathcal{B}(\R^d) \to [0,1] $ be a probability measure on $\R^d$, 
	let 
	$
	\left\| \cdot \right\| \colon \R^d \to [0,\infty)
	$
	be the $ d $-dimensional Euclidean norm, let $(\Omega, \mathcal{F}, \P)$ be a probability space, let $W \colon [0, T] \times \Omega \to \R^m$ be a standard Brownian motion, let $f_0 \colon \R^d \to \R$ and $f_1 \colon \R^d \to \R^d$ be  functions, let $\chi \colon [0, T] \to [0, T]$ be a function, assume for all $t  \in [0, T]$, $x, y \in \R^d$ that
	\setlength{\jot}{8pt}
	\begin{gather}
	\label{eq:lem:ass:f_0}
	|f_0(x) - f_0(y)| \leq \kappa d^{\ExponDim_0} (1 + \|x\|^{\theta} + \|y\|^{\theta}) \|x-y\|,\\ 
	\label{eq:lem:ass:f}
	\|f_1(x) - f_1(y)\| \leq \kappa \|x -y\|,
	 \qquad
	\operatorname{Trace}(B^* B)  \leq \kappa d^{2 \ExponDim_1},\\
	\label{eq:lem:ass:linear}
	\|f_1(0)\| \leq \kappa d^{\ExponDim_1}, \qquad 	\left[	\int_{\R^d} \|z\|^{p(1+\theta)} \, \nu (dz) \right]^{\nicefrac{1}{(p (1+\theta))}}  \leq \kappa d^{\ExponDim_1}, 
	\end{gather}
	and $\chi(t) = \max(\{0, h, 2h, \ldots\} \cap [0, t])$, 
	and 
	let $ X^x\colon [0,T] \times \Omega \to \R^d $, $x \in \R^d$, and $Y^x \colon [0,T] \times \Omega \to \R^d $, $x \in \R^d$, be stochastic processes with 
	continuous sample paths which satisfy
	for all $x \in \R^d$,  $ t \in [0,T] $ that 
	$
	Y_t^x = x + \int_0^t f_1\big( Y^x_{ \chi( s ) } \big) \, ds + B W_t
	$
	and
	\begin{equation}
	X^x_t =  x + \int_0^t f_1( X^x_s ) \, ds + B W_t
	.
	\end{equation}
	Then it holds that
	\begin{align}
	\begin{split}
	&\left[ \int_{\R^d} \big|\E[f_0(X^x_T)] - \E[f_0(Y^x_T)]  \big|^p \, \nu(dx) \right]^{\nicefrac{1}{p}} \leq 2^{4\theta +5} | \!\max\{1, T\} |^{\theta +1}\\
	& \qquad \cdot |\!\max\{ \kappa, \theta, 1 \}|^{\theta +3} e^{(6\max\{ \kappa, \theta, 1 \}+5|\!\max\{ \kappa, \theta, 1 \}|^2 T)}  d^{\ExponDim_0 + \ExponDim_1(\theta+1)} (h/T)^{\nicefrac{1}{2}}.
	\end{split}
	\end{align}
\end{lemma}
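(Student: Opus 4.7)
The plan is to invoke Lemma~\ref{lem:con_monte_carlo_euler} pointwise in the initial value $\xi = x \in \R^d$ and then integrate the resulting pointwise bound against $\nu$ via Minkowski's inequality.

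First I would recast the weighted Lipschitz bound on $f_0$ in the integral form required by Lemma~\ref{lem:con_monte_carlo_euler}. Using the monotonicity of $t \mapsto t^\theta$ on $[0,\infty)$ and the estimates $r\|x\|+(1-r)\|y\| \geq r\|x\|$ and (by the substitution $r \leftrightarrow 1-r$) $\geq (1-r)\|y\|$, one obtains
\[
\int_0^1 [r\|x\|+(1-r)\|y\|]^\theta\,dr \;\geq\; \frac{\max(\|x\|,\|y\|)^\theta}{\theta+1} \;\geq\; \frac{\|x\|^\theta+\|y\|^\theta}{2(\theta+1)},
\]
and therefore $1 + \|x\|^\theta + \|y\|^\theta \leq 2(\theta+1)\bigl(1+\int_0^1 [r\|x\|+(1-r)\|y\|]^\theta\,dr\bigr)$. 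Combined with the hypothesis on $f_0$ this verifies the assumption of Lemma~\ref{lem:con_monte_carlo_euler} with $L_0 = 2(\theta+1)\kappa d^{\ExponDim_0}$, $L_1 = \kappa$, and $l = \theta$.

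Applying Lemma~\ref{lem:con_monte_carlo_euler} pointwise for each $x \in \R^d$ (with $\xi=x$, $X=X^x$, $Y=Y^x$) produces an estimate for $|\E[f_0(X^x_T)]-\E[f_0(Y^x_T)]|$ involving $(h/T)^{\nicefrac{1}{2}}$, an exponential factor, $\max\{1,L_0\}$, and the $(1+\theta)$-th power of a bracket in $\|x\|$. Setting $M = \max\{\kappa,\theta,1\}$ and using $\|f_1(0)\| \leq \kappa d^{\ExponDim_1}$, $\operatorname{Trace}(B^\ast B) \leq \kappa d^{2\ExponDim_1}$, and $\sqrt{T} \leq \max\{1,T\}$, this bracket is at most $\|x\|+(3+\sqrt{2})Md^{\ExponDim_1}\max\{1,T\}$. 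I would then take the $L^p(\nu)$-norm in $x$ and apply Minkowski's inequality in $L^{p(1+\theta)}(\nu)$; the moment hypothesis $\bigl[\int_{\R^d}\|z\|^{p(1+\theta)}\,\nu(dz)\bigr]^{\nicefrac{1}{(p(1+\theta))}} \leq \kappa d^{\ExponDim_1} \leq Md^{\ExponDim_1}$ then bounds the $L^{p(1+\theta)}(\nu)$-norm of the bracket by $(4+\sqrt{2})Md^{\ExponDim_1}\max\{1,T\}$.

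Raising to the $(1+\theta)$-th power and multiplying by $\max\{1,L_0\} \leq 4M^2 d^{\ExponDim_0}$ (because $2(\theta+1)\kappa \leq 4M^2$) and by the exponential factor from Lemma~\ref{lem:con_monte_carlo_euler}, whose exponent is bounded using $\theta+3+2\kappa \leq 6M$ and $\theta\kappa+2\kappa+2 \leq 5M^2$, produces an estimate of the form
\[
4\,(4+\sqrt{2})^{1+\theta}\,M^{\theta+3}\,d^{\ExponDim_0+\ExponDim_1(\theta+1)}\,\max\{1,T\}^{\theta+1}\,e^{6M+5M^2 T}\,(h/T)^{\nicefrac{1}{2}}.
\]
Since $4+\sqrt{2} < 8 = 2^3$, one has $4(4+\sqrt{2})^{1+\theta} \leq 2^{2}\cdot 2^{3(1+\theta)} = 2^{3\theta+5} \leq 2^{4\theta+5}$ (using $\theta \geq 0$), which yields the claimed inequality. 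The principal difficulty is the numerical bookkeeping at this last step: one must arrange the slack in the elementary inequalities $\theta+3+2\kappa\leq 6M$, $\theta\kappa+2\kappa+2\leq 5M^2$, $2(\theta+1)\kappa\leq 4M^2$, and $4(4+\sqrt{2})^{1+\theta}\leq 2^{4\theta+5}$ so that the combined constant collapses to exactly the form $2^{4\theta+5}M^{\theta+3}e^{6M+5M^2 T}$ stated in the lemma; all analytic steps are otherwise routine manipulations with Minkowski's inequality and the hypotheses.
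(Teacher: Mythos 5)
Your proposal is correct and follows essentially the same route as the paper's own proof: recast the hypothesis on $f_0$ into the integral form demanded by Lemma~\ref{lem:con_monte_carlo_euler}, apply that lemma pointwise in the initial value $x$, and then integrate the resulting pointwise bound against $\nu$. The only difference is a cosmetic reordering in the last step (you take the $L^{p(1+\theta)}(\nu)$-norm of the linear bracket via Minkowski and then raise to the $(1+\theta)$th power, whereas the paper first expands $(a+b)^{1+\theta}\leq 2^{\theta}(a^{1+\theta}+b^{1+\theta})$ and then applies Minkowski in $L^p(\nu)$); both routes arrive at the stated constant $2^{4\theta+5}|\!\max\{1,T\}|^{\theta+1}|\!\max\{\kappa,\theta,1\}|^{\theta+3}e^{(6\max\{\kappa,\theta,1\}+5|\!\max\{\kappa,\theta,1\}|^2 T)}$.
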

\begin{proof}[Proof of Lemma~\ref{lem:integral:weak:euler}]
	Throughout this proof let 
	$ \iota = \max\{ \kappa, \theta, 1 \} $.
	Note that \eqref{eq:lem:ass:f_0} proves that for all  $x, y \in \R^d$ it holds that
	\begin{equation}
	\begin{split}
	&|f_0(x) - f_0(y)|  \leq \kappa d^{\ExponDim_0} (1 +  \|x\|^{\theta} + \|y \|^{\theta} )\|x-y\| \\
	& \leq \kappa d^{\ExponDim_0} \! \left( 1 + 2 (\theta +1) \int_0^1 \big[r \|x\|+ (1-r) \|y\|\big]^{\theta} \, dr \right) \! \|x-y\|\\
	& \leq 2 \kappa (\theta +1 ) d^{\ExponDim_0} \! \left( 1 +  \int_0^1 \big[r \|x\| + (1-r) \|y\|\big]^{\theta} \, dr \right) \! \|x-y\|.
	\end{split}
	\end{equation}
	Lemma~\ref{lem:con_monte_carlo_euler} (with $d = d$, $m = m$, $\xi = x$, $T =T$,  $L_0 = 2\kappa (\theta +1) d^{\ExponDim_0}$, $L_1 = \kappa$, $l = \theta$, $h = h$, 
	$B = B$, $\left\| \cdot \right\| = \left\| \cdot \right\|$, $(\Omega, \mathcal{F}, \P) = (\Omega, \mathcal{F}, \P)$, $W = W$, $f_0 = f_0$, $f_1 = f_1$, $\chi = \chi$, $X = X^x$, $Y = Y^x$ for $x \in \R^d$ in the notation of Lemma~\ref{lem:con_monte_carlo_euler}), \eqref{eq:lem:ass:linear},
	and \eqref{eq:lem:ass:f} hence ensure that
	for all $x \in \R^d$ it holds that
	\begin{align*}
	&\big|\E[f_0(X^x_T)] - \E[f_0(Y^x_T)] \big| \leq (h/T)^{\nicefrac{1}{2}}  e^{(\theta+3+2\kappa+[\theta\kappa + 2 \kappa +2]T)}  \max\{1, 2 \kappa (\theta +1) d^{\ExponDim_0}\}\\ & \cdot \Big[ \|x\| + 2  
	+ \max\{1, \|f_1(0)\|\} \max\{1, T\} + \sqrt{(2\max\{\theta, 1\} -1) \operatorname{Trace}(B^*B) T} \Big]^{1+\theta}\\
	&  \leq (h/T)^{\nicefrac{1}{2}}  e^{(\theta+3+2\kappa+[\theta\kappa + 2 \kappa +2]T)}  \max\{1, 2 \kappa (\theta +1) d^{\ExponDim_0}\} \\
	& \cdot \Big[ \|x\| + 2 + \max\{1,  \kappa d^{\ExponDim_1} \} \max\{1, T\}  + \sqrt{(2\max\{\theta, 1\} -1) \kappa d^{2 \ExponDim_1} T} \Big]^{1+\theta}. \numberthis
	\end{align*}
	Therefore, we obtain that for all $x \in \R^d$ it holds that
	\begin{align*}
	&\big|\E[f_0(X^x_T)] - \E[f_0(Y^x_T)] \big|\\
	&  \leq 2 \iota (\iota +1)  d^{\ExponDim_0} (h/T)^{\nicefrac{1}{2}}  e^{(6\iota+5\iota^2 T)} \big[ \|x\| + 2 + \iota d^{\ExponDim_1}\max\{1, T\}   + \sqrt{(2\iota -1) \kappa d^{2 \ExponDim_1} T} \big]^{1+\theta} \\
	&  \leq  4 \iota^2  d^{\ExponDim_0} (h/T)^{\nicefrac{1}{2}} e^{(6\iota+5\iota^2 T)} \big[ \|x\| + 2 + \iota d^{\ExponDim_1}\max\{1, T\}   + \sqrt{2\iota \kappa d^{2 \ExponDim_1} T} \big]^{1+\theta}\\
	& \leq 4 \iota^2  d^{\ExponDim_0} (h/T)^{\nicefrac{1}{2}} e^{(6\iota+5\iota^2 T)}  \big[ \|x\| + 2 + 3 \iota d^{\ExponDim_1}\max\{1, T\} \big]^{1+\theta}\\
	& \leq 4 \iota^2  d^{\ExponDim_0} (h/T)^{\nicefrac{1}{2}} e^{(6\iota+5\iota^2 T)}  \big[ \|x\| + 5 \iota d^{\ExponDim_1}\max\{1, T\} \big]^{1+\theta}. \numberthis
	\end{align*}
	This establishes that
	\begin{align*}
	&\left[ \int_{\R^d} \big|\E[f_0(X^x_T)] - \E[f_0(Y^x_T)]  \big|^p \, \nu(dx) \right]^{\nicefrac{1}{p}} \numberthis\\
	& \leq 4 \iota^2  d^{\ExponDim_0} (h/T)^{\nicefrac{1}{2}} e^{(6\iota+5\iota^2 T)} 
	\left[ \int_{\R^d}
	\big[ \|x\| + 5 \iota d^{\ExponDim_1}\max\{1, T\} \big]^{p(1+\theta)} \, \nu(dx) \right]^{\nicefrac{1}{p}}\\
	&\leq 4 \iota^2  d^{\ExponDim_0} (h/T)^{\nicefrac{1}{2}} e^{(6\iota+5\iota^2 T)} 
	\left[ \int_{\R^d}
	\big[ 2^{\theta} \|x\|^{1+\theta} + 2^{\theta}  ( 5 \iota d^{\ExponDim_1} \max\{1, T\} )^{1+\theta} \big]^{p} \, \nu(dx) \right]^{\nicefrac{1}{p}}\\
	&\leq 2^{\theta+2} \iota^2  d^{\ExponDim_0} (h/T)^{\nicefrac{1}{2}} e^{(6\iota+5\iota^2 T)} \left[
	\left[ \int_{\R^d}
	\|x\|^{p(1+\theta)} \, \nu(dx) \right]^{\nicefrac{1}{p}} +   ( 5 \iota d^{\ExponDim_1} \max\{1, T\} )^{1+\theta} \right].
	\end{align*}
	Combining this and \eqref{eq:lem:ass:linear} assures that 
	\begin{align}
	\begin{split}
	&\left[ \int_{\R^d} \big|\E[f_0(X^x_T)] - \E[f_0(Y^x_T)]  \big|^p \, \nu(dx) \right]^{\nicefrac{1}{p}} \\
	& \leq 2^{\theta+2} \iota^2  d^{\ExponDim_0} (h/T)^{\nicefrac{1}{2}} e^{(6\iota+5\iota^2 T)} \big[ \kappa^{1+\theta} d^{\ExponDim_1(1+\theta)} +  ( 5 \iota d^{\ExponDim_1} \max\{1, T\} )^{1+\theta}  \big]\\
	& \leq 2^{\theta+2} ( 5 \iota \max\{1, T\} )^{\theta +1} \iota^2  d^{\ExponDim_0 + \ExponDim_1(\theta+1)} (h/T)^{\nicefrac{1}{2}} e^{(6\iota+5\iota^2 T)} \\
	& \leq 2^{\theta+2 + 3(\theta+1)} |\! \max\{1, T\} |^{\theta +1} \iota^{2 +\theta +1} d^{\ExponDim_0 + \ExponDim_1(\theta+1)} (h/T)^{\nicefrac{1}{2}} e^{(6\iota+5\iota^2 T)} \\
	& \leq 2^{4\theta+5} |\! \max\{1, T\} |^{\theta +1} \iota^{\theta+3} e^{(6\iota+5\iota^2 T)} d^{\ExponDim_0 + \ExponDim_1(\theta+1)} (h/T)^{\nicefrac{1}{2}}.
	\end{split}
	\end{align}
	The proof of Lemma~\ref{lem:integral:weak:euler} is thus completed.
\end{proof}

\begin{lemma}[Monte Carlo error]
	\label{lem:error:monte_carlo}
	Let $d, M, n \in \N$, $T, \kappa, \theta  \in (0, \infty)$, 
	$\ExponDim_0, \ExponDim_1 \in [0, \infty)$,	
	$B \in \R^{d \times n}$, $p \in [2, \infty)$,
	let $\nu  \colon \mathcal{B}(\R^d) \to [0,1] $ be a probability measure on $\R^d$, 
	let 
	$
	\left\| \cdot \right\| \colon \R^d \to [0,\infty)
	$
	be the $ d $-dimensional Euclidean norm, let $(\Omega, \mathcal{F}, \P)$ be a probability space, let $W^m \colon [0, T] \time \Omega \to \R^n$, $m \in \{1, 2, \ldots, M\}$, be independent standard Brownian motions, let $f_0 \colon \R^d \to \R$ be  $\mathcal{B}(\R^d) \slash \mathcal{B}(\R)$-measurable,
	let $f_1 \colon \R^d \to \R^d$ be  $\mathcal{B}(\R^d) \slash \mathcal{B}(\R^d)$-measurable, let $\chi \colon [0, T] \to [0, T]$ be  $\mathcal{B}([0,T]) \slash \mathcal{B}([0, T])$-measurable, assume for all $t  \in [0, T]$, $x \in \R^d$ that
	\begin{gather}
	\label{eq:monte_carlo:ass:f_1}
	|f_0(x)|  \leq \kappa d^{\ExponDim_0} (d^{\ExponDim_1 \theta} + \|x\|^{\theta}), \qquad 	\|f_1(x)\| \leq \kappa (d^{\ExponDim_1} + \|x\|), \\
	\label{eq:lem:ass:trace}
	\operatorname{Trace}(B^* B) \leq \kappa d^{2 \ExponDim_1}, \qquad 	\left[	\int_{\R^d} \|z\|^{p\theta} \, \nu (dz) \right]^{\nicefrac{1}{(p \theta)}}  \leq \kappa d^{\ExponDim_1},
	\end{gather}
	and $\chi(t) \leq t$, 
	and 
	let $ Y^{m, x} \colon [0,T] \times \Omega \to \R^d $, $m \in \{1, 2, \ldots, M\}$, $x \in \R^d$, be stochastic processes with 
	continuous sample paths which satisfy
	for all $x \in \R^d$, $m \in \{1, 2, \ldots, M\}$,  $ t \in [0,T] $ that 
	\begin{equation}
	Y_t^{m, x} = x + \int_0^t f_1\big( Y^{m, x}_{ \chi( s ) } \big) \, ds + B W_t^m,
	\end{equation}
	Then it holds that
	\begin{align}
	\begin{split}
	&\left( \E \!\left[ \int_{\R^d} \Big| \E[f_0(Y^{1, x}_T)] - \tfrac{1}{M} \Big[ \textstyle \sum\nolimits_{m=1}^M \displaystyle f_0(Y^{m, x}_T) \Big]  \Big|^p \, \nu(dx) \right] \right)^{\!\nicefrac{1}{p}}\\
	& \leq  2^{\theta+2}  p  \kappa (p \theta+p +1)^{\theta} (\kappa T +1)^{\theta}  e^{\kappa \theta T} (\kappa^{\theta} +1) d^{\ExponDim_0 + \ExponDim_1 \theta} M^{-\nicefrac{1}{2}}.
	\end{split}
	\end{align}
\end{lemma}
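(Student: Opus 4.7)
The plan is to exploit the i.i.d.\ structure of $(Y^{m,x}_T)_{m \in \{1,2,\ldots,M\}}$ for each fixed $x \in \R^d$ and to combine a standard $L^p$-Monte Carlo estimate with moment bounds for the Euler scheme. First, by Fubini's theorem the quantity to be estimated equals $\pa{\int_{\R^d} \E[|\tfrac{1}{M}\sum_{m=1}^M f_0(Y^{m,x}_T) - \E[f_0(Y^{1,x}_T)]|^p] \, \nu(dx)}^{\nicefrac{1}{p}}$. For each $x \in \R^d$ the summands $f_0(Y^{m,x}_T) - \E[f_0(Y^{m,x}_T)]$ are i.i.d.\ and centered, so a Marcinkiewicz--Zygmund/Rosenthal-type inequality for i.i.d.\ sums yields, for all $p \in [2,\infty)$, a bound of the form $(\E[|\tfrac{1}{M}\sum_{m=1}^M Z_m|^p])^{\nicefrac{1}{p}} \le c(p) M^{-\nicefrac{1}{2}} (\E[|Z_1|^p])^{\nicefrac{1}{p}}$ with $c(p)$ of order $\sqrt{p}$. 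Together with the triangle inequality this produces a pointwise (in $x$) bound by $\tfrac{2c(p)}{\sqrt{M}} (\E[|f_0(Y^{1,x}_T)|^p])^{\nicefrac{1}{p}}$.

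Next I would use the growth hypothesis $|f_0(z)| \le \kappa d^{\ExponDim_0}(d^{\ExponDim_1 \theta}+\|z\|^\theta)$ applied at $z = Y^{1,x}_T$ to reduce the task to bounding $(\E[\|Y^{1,x}_T\|^{p\theta}])^{\nicefrac{1}{(p\theta)}}$. For the latter, applying the triangle inequality in $L^{p\theta}(\P)$ to the defining equation $Y^{1,x}_t = x + \int_0^t f_1(Y^{1,x}_{\chi(s)})\,ds + BW^1_t$, together with $\|f_1(y)\| \le \kappa(d^{\ExponDim_1}+\|y\|)$ and a Khintchine/BDG-type estimate on $\|BW^1_T\|$ with constant of order $\sqrt{p\theta}$ (noting $\operatorname{Trace}(B^*B)\le\kappa d^{2\ExponDim_1}$), followed by a discrete Gronwall argument (which can be routed through Lemma~\ref{lem:apriori} evaluated at the discretization times), yields an estimate of the form $(\E[\|Y^{1,x}_T\|^{p\theta}])^{\nicefrac{1}{(p\theta)}} \le e^{\kappa T}\pa{\|x\| + C_{p,\theta}\, d^{\ExponDim_1}(\kappa T+1)}$ with $C_{p,\theta}$ of order $\sqrt{p\theta}$.

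The last step is to integrate in $x$ with respect to $\nu$: Minkowski's inequality combined with the hypothesis $(\int \|z\|^{p\theta}\,\nu(dz))^{\nicefrac{1}{(p\theta)}} \le \kappa d^{\ExponDim_1}$ converts the pointwise bound into an integrated bound proportional to $d^{\ExponDim_1 \theta}$, modulo constants depending on $p$, $\theta$, $\kappa$, and $T$. Combining this with the Monte Carlo reduction from the first paragraph and collecting all $d^{\ExponDim_0 + \ExponDim_1\theta}$ factors then delivers the asserted inequality.

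The main obstacle will be the bookkeeping needed to reproduce the explicit prefactor $2^{\theta+2}p \kappa(p\theta+p+1)^\theta(\kappa T+1)^\theta e^{\kappa\theta T}(\kappa^\theta+1)$. In particular, the polynomial factor $(p\theta+p+1)^\theta$ arises because the BDG-type constant of order $\sqrt{p\theta}$ is raised to the power $\theta$ when passing from an $L^{p\theta}$-moment bound on $\|Y^{1,x}_T\|$ to an $L^p$-moment bound on $f_0(Y^{1,x}_T)\sim\|Y^{1,x}_T\|^\theta$, while the linear-in-$p$ factor comes from the Marcinkiewicz--Zygmund inequality after combining with the triangle inequality. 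A careful choice of the i.i.d.\ $L^p$ estimate, together with a disciplined use of Minkowski's inequality to split sums before taking $p$-th roots, should deliver precisely the stated constant.
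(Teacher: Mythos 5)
Your proposal follows essentially the same route as the paper: a centered--i.i.d.\ $L^p$ Monte Carlo estimate with $\sqrt{p}$-type constant (the paper invokes \cite[Corollary~2.5]{GrohsWurstemberger2018}) is applied pointwise in $x$, the growth bound on $f_0$ reduces the problem to $L^{p\theta}$-moments of $Y^{1,x}_T$, these are controlled via a discrete Gronwall argument together with a Gaussian/BDG moment bound for $BW^1_T$ (the paper uses \cite[Lemmas~4.1--4.2]{JentzenSalimovaWelti2018}), and Minkowski plus the $\nu$-integrability hypothesis finish the integration in $x$. The constant bookkeeping you outline is consistent with the paper's chain of elementary overestimates, including the need to route the $\theta$-th moment through an $L^{p\max\{\theta,1\}}$ bound before applying H\"older, which is where the $(p\theta+p+1)^\theta$ factor actually materializes.
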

\begin{proof}[Proof of Lemma~\ref{lem:error:monte_carlo}]
	Throughout this proof let $ \iota = \max\{ \theta, 1 \} $.
	Note that \eqref{eq:monte_carlo:ass:f_1} and, e.g., \cite[Lemma~4.1]{JentzenSalimovaWelti2018} (with $d =d$, $m =n$, $\xi = \xi$, $p =q$, $c = \kappa$, $C= \kappa d^{\ExponDim_1}$, $T =T$, $B = B$, $\left\| \cdot \right\| = \left\| \cdot \right\|$, $(\Omega, \mathcal{F}, \P) = (\Omega, \mathcal{F}, \P)$, $W =  W^1$, $\mu = f_1$, $\chi = \chi$, $X = Y^{1, x}$ for $q \in [1, \infty)$, $x \in \R^d$ in the notation of \cite[Lemma~4.1]{JentzenSalimovaWelti2018}) prove that for all $q \in [1, \infty)$, $x \in \R^d$ it holds that
	\begin{align}
	\begin{split}
	\big(\E \big[ \| Y^{1, x}_T \|^q  \big]\big)^{\nicefrac{1}{q}} \leq \Big( \|x\| + \kappa d^{\ExponDim_1}T + \big(\E \big[ \| B W^1_T \|^q  \big]\big)^{\nicefrac{1}{q}} \Big) e^{\kappa T}.
	\end{split}
	\end{align}
	This, \eqref{eq:lem:ass:trace}, and, e.g., \cite[Lemma~4.2]{JentzenSalimovaWelti2018} (with $d = d$, $m = n$, $T = T$, $p = q$, $B = B$, $\left\| \cdot \right\| = \left\| \cdot \right\|$, $(\Omega, \mathcal{F}, \P) = (\Omega, \mathcal{F}, \P)$, $W = W^1$ for $q \in [1, \infty)$ in the notation of \cite[Lemma~4.2]{JentzenSalimovaWelti2018})  ensure that for all $q \in [1, \infty)$, $x \in \R^d$ it holds that
	\begin{align}
	\begin{split}
	\Big(\E \big[ \| Y^{1, x}_T \|^q  \big]\Big)^{\!\nicefrac{1}{q}} &\leq \Big( \|x\| + \kappa d^{\ExponDim_1}T +\sqrt{\max\{1, q-1\} \operatorname{Trace}(B^* B)T} \Big) e^{\kappa T}\\
	& \leq  \Big( \|x\| + \kappa d^{\ExponDim_1}T +\sqrt{\max\{1, q-1\} \kappa d^{2 \ExponDim_1} T} \Big) e^{\kappa T}\\
	&  \leq  \big( \|x\| + \kappa d^{\ExponDim_1}T +q \max\{\kappa T, 1\} d^{\ExponDim_1}  \big) e^{\kappa T}\\
	&  \leq  \big( \|x\| + (q+1) \max\{\kappa T, 1\} d^{\ExponDim_1}  \big) e^{\kappa T}.
	\end{split}
	\end{align}
	Combining this with \eqref{eq:monte_carlo:ass:f_1} and H\"older's inequality establishes for all $x \in \R^d$ that
	\begin{align}
	\begin{split}
	\Big(\E \big[ |f_{0} (Y^{1, x}_T ) |^p \big]\Big)^{\!\nicefrac{1}{p}} & \leq \kappa d^{\ExponDim_0 + \ExponDim_1 \theta}  + \kappa d^{\ExponDim_0}  \Big(\E \big[ \| Y^{1, x}_T \|^{p \theta} \big]\Big)^{\!\nicefrac{1}{p}}\\
	& \leq \kappa d^{\ExponDim_0 + \ExponDim_1 \theta} + \kappa d^{\ExponDim_0} \Big(\E \big[ \| Y^{1, x}_T \|^{p \iota} \big]\Big)^{\!\nicefrac{\theta}{(p \iota)}}\\
	& \leq \kappa  d^{\ExponDim_0 + \ExponDim_1 \theta} + \kappa d^{\ExponDim_0} \big( \|x\| + (p \iota+1) \max\{\kappa T, 1\} d^{\ExponDim_1}  \big)^{\theta} e^{\kappa \theta T}\\
	& \leq \kappa d^{\ExponDim_0 + \ExponDim_1 \theta} + \kappa d^{\ExponDim_0} \big( \|x\| + (p \iota+1) (\kappa T +1) d^{\ExponDim_1}  \big)^{\theta} e^{\kappa \theta T}.
	\end{split}
	\end{align}
	The fact that  $ \forall \, y, z \in \R, \alpha \in [0, \infty) \colon |y + z|^{\alpha}  \leq 2^{\alpha }(|y|^{\alpha} + |z|^{\alpha})$ hence proves  for all $x \in \R^d$ that
	\begin{align}
	\label{eq:apriori:f_0}
	\begin{split}
	\Big(\E \big[ |f_{0} (Y^{1, x}_T ) |^p \big]\Big)^{\!\nicefrac{1}{p}} & \leq  \kappa  d^{\ExponDim_0 + \ExponDim_1 \theta} + 2^{\theta} \kappa d^{\ExponDim_0} \big( \|x\|^{\theta} + (p \iota+1)^{\theta} (\kappa T +1)^{\theta} d^{\ExponDim_1 \theta}  \big) e^{\kappa \theta T}\\
	& \leq 2^{\theta} \kappa (p \iota+1)^{\theta} (\kappa T +1)^{\theta} d^{\ExponDim_0}  e^{\kappa \theta T} \big( \|x\|^{\theta} + 2 d^{\ExponDim_1 \theta} \big)\\
	& \leq 2^{\theta+1} \kappa (p \iota+1)^{\theta} (\kappa T +1)^{\theta} d^{\ExponDim_0}  e^{\kappa \theta T} \big( \|x\|^{\theta} +  d^{\ExponDim_1 \theta} \big).
	\end{split}
	\end{align}
	This implies that for all $x \in \R^d$ it holds that
	\begin{align}
	\begin{split}
	\E \big[ |f_{0} (Y^{1, x}_T ) | \big] \leq \Big(\E \big[ |f_{0} (Y^{1, x}_T ) |^p \big]\Big)^{\!\nicefrac{1}{p}} < \infty.
	\end{split}
	\end{align}
	Combining this with, e.g.,  \cite[Corollary~2.5]{GrohsWurstemberger2018} (with $p=p$, $d=1$, $n=M$, $\left\| \cdot \right\| = \left\| \cdot \right\|$, $(\Omega, \mathcal{F}, \P) = (\Omega, \mathcal{F}, \P)$, $X_i = f_0(Y^{i, x})$ for $i \in \{1, 2, \ldots, M\}$, $x \in \R^d$  in the notation of \cite[Corollary~2.5]{GrohsWurstemberger2018}) and \eqref{eq:apriori:f_0} assures for all $x \in \R^d$ that
	\begin{align}
	\begin{split}
	& \left( \E \! \left[ \Big| \E[f_0(Y^{1, x}_T)] - \tfrac{1}{M} \Big[ \textstyle \sum\nolimits_{m=1}^M \displaystyle f_0(Y^{m, x}_T) \Big]  \Big|^p  \right] \right)^{\!\nicefrac{1}{p}}\\
	& \leq 2 M^{-\nicefrac{1}{2}} \sqrt{(p-1)}  \Big(\E \Big[\big|f_{0} (Y^{1, x}_T) - \E[ f_{0} (Y^{1, x}_T)]\big|^p\Big] \Big)^{\!\nicefrac{1}{p}} \\
	& \leq 4 M^{-\nicefrac{1}{2}} \sqrt{(p-1)} \Big(\E \big[ |f_{0} (Y^{1, x}_T ) |^p \big]\Big)^{\!\nicefrac{1}{p}} \\
	& \leq 2^{\theta+3} M^{-\nicefrac{1}{2}} \sqrt{(p-1)}  \kappa (p \iota+1)^{\theta} (\kappa T +1)^{\theta} d^{\ExponDim_0}  e^{\kappa \theta T} \big( \|x\|^{\theta} +  d^{\ExponDim_1 \theta} \big) .
	\end{split}
	\end{align}
	This and the fact that $\sqrt{p -1} \leq \nicefrac{p}{2}$ establish that
	\begin{align*}
	& \left( \E \! \left[ \int_{\R^d} \Big| \E[f_0(Y^{1, x}_T)] - \tfrac{1}{M} \Big[ \textstyle \sum\nolimits_{m=1}^M  \displaystyle f_0(Y^{m, x}_T) \Big]  \Big|^p \, \nu(dx) \right] \right)^{\!\nicefrac{1}{p}} \numberthis\\
	& \leq 2^{\theta+2} M^{-\nicefrac{1}{2}} p \kappa (p \iota+1)^{\theta} (\kappa T +1)^{\theta} d^{\ExponDim_0}  e^{\kappa \theta T}  \left( \int_{ \R^d } \big( \|x\|^{\theta} +  d^{\ExponDim_1 \theta} \big)^p \, \nu(dx) \right)^{\!\nicefrac{1}{p}}\\
	& \leq   2^{\theta+2} M^{-\nicefrac{1}{2}} p  \kappa (p \iota+1)^{\theta} (\kappa T +1)^{\theta} d^{\ExponDim_0}  e^{\kappa \theta T} \left( d^{\ExponDim_1 \theta} + \left[ \int_{\R^d} \|x\|^{p \theta} \, \nu(dx) \right]^{\nicefrac{1}{p}} \right).
	\end{align*}
	Combining this and \eqref{eq:lem:ass:trace} demonstrates that
	\begin{align}
	\begin{split}
	& \left( \E \! \left[ \int_{\R^d} \Big| \E[f_0(Y^{1, x}_T)] - \tfrac{1}{M} \Big[ \textstyle \sum\nolimits_{m=1}^M \displaystyle f_0(Y^{m, x}_T) \Big]  \Big|^p \, \nu(dx) \right] \right)^{\!\nicefrac{1}{p}}\\& \leq  2^{\theta+2} M^{-\nicefrac{1}{2}} p  \kappa (p \iota+1)^{\theta} (\kappa T +1)^{\theta} d^{\ExponDim_0}  e^{\kappa \theta T}  \left[ d^{\ExponDim_1 \theta} + \kappa^{\theta} d^{\ExponDim_1 \theta} \right]\\
	& \leq 2^{\theta+2}  p  \kappa (p \iota+1)^{\theta} (\kappa T +1)^{\theta}  e^{\kappa \theta T} (\kappa^{\theta} +1) d^{\ExponDim_0 + \ExponDim_1 \theta} M^{-\nicefrac{1}{2}}\\
	& \leq 2^{\theta+2}  p  \kappa (p \theta+p +1)^{\theta} (\kappa T +1)^{\theta}  e^{\kappa \theta T} (\kappa^{\theta} +1) d^{\ExponDim_0 + \ExponDim_1 \theta} M^{-\nicefrac{1}{2}}.
	\end{split}
	\end{align}
	The proof of  Lemma~\ref{lem:error:monte_carlo} is thus completed.
\end{proof}

\begin{prop}
	\label{prop:monte_carlo_euler}
	Let $d, M, n \in \N$, $T, \kappa, \theta \in (0, \infty)$,
	$\ExponDim_0, \ExponDim_1 \in [0, \infty)$, $h \in (0, T]$, $B \in \R^{d \times n}$, $p \in [2, \infty)$,
	let $\nu  \colon \mathcal{B}(\R^d) \to [0,1] $ be a probability measure on $\R^d$, 
	let 
	$
	\left\| \cdot \right\| \colon \R^d \to [0,\infty)
	$
	be the $ d $-dimensional Euclidean norm, let $(\Omega, \mathcal{F}, \P)$ be a probability space, let $W^m \colon [0, T] \times \Omega \to \R^n$, $m \in \{1, 2, \ldots, M\}$, be independent standard Brownian motions, let $f_0 \colon \R^d \to \R$ and $f_1 \colon \R^d \to \R^d$ be  functions, let $\chi \colon [0, T] \to [0, T]$ be a function, assume for all $t  \in [0, T]$, $x, y \in \R^d$ that
		\setlength{\jot}{8pt}
	\begin{gather}
	\label{eq:cor:ass:f_0}
	|f_0(x) - f_0(y)| \leq \kappa d^{\ExponDim_0} (1 + \|x\|^{\theta} + \|y\|^{\theta}) \|x-y\|, \\
	\label{eq:cor:ass:f}
	|f_0(x)|  \leq \kappa d^{\ExponDim_0} ( d^{\ExponDim_1 \theta} + \|x\|^{\theta}), \qquad \operatorname{Trace}(B^* B) \leq \kappa d^{2\ExponDim_1},\\
	\label{eq:cor:ass:linear}
	\|f_1(x) - f_1(y)\| \leq \kappa \|x -y\|, \qquad \|f_1(x)\| \leq \kappa (d^{\ExponDim_1} + \|x\|),\\
	\label{eq:cor:ass:integral}
	\left[	\int_{\R^d} \|z\|^{p(1+\theta)} \, \nu (dz) \right]^{\nicefrac{1}{(p(1+\theta))}}  \leq \kappa d^{\ExponDim_1},
	\end{gather}
	and $\chi(t) = \max(\{0, h, 2h, \ldots\} \cap [0, t])$, 
	and 
	let $ X^x \colon [0,T] \times \Omega \to \R^d $, $x \in \R^d$, and  $ Y^{m, x} \colon [0,T] \times \Omega \to \R^d $, $m \in \{1, 2, \ldots, M\}$, $x \in \R^d$, be stochastic processes with 
	continuous sample paths which satisfy
	for all $x \in \R^d$, $m \in \{1, 2, \ldots, M\}$, $ t \in [0,T] $ that 
	$
	X^x_t =  x + \int_0^t f_1( X^x_s ) \, ds + B W_t^1
	$
	and
	\begin{equation}
	Y_t^{m, x} = x + \int_0^t f_1\big( Y^{m, x}_{ \chi( s ) } \big) \, ds + B W_t^m
	.
	\end{equation}
	Then it holds that
	\begin{align}
	\begin{split}
	&\left( \E \! \left[ \int_{\R^d} \Big| \E[f_0(X^{x}_T)] - \tfrac{1}{M} \Big[ \textstyle \sum\nolimits_{m=1}^M \displaystyle f_0(Y^{m, x}_T) \Big]  \Big|^p \, \nu(dx) \right] \right)^{\!\nicefrac{1}{p}}\\
	& \leq   2^{4\theta +5} |\! \max\{1, T\} |^{\theta +1} |\!\max\{ \kappa, \theta, 1 \}|^{2\theta +3} e^{(6\max\{ \kappa, \theta, 1 \}+5|\!\max\{ \kappa, \theta, 1 \}|^2 T)}  \\
	& \quad \cdot  p (p \theta + p +1)^{\theta} d^{\ExponDim_0 + \ExponDim_1(\theta+1)} \big((h/T)^{\nicefrac{1}{2}} +  M^{-\nicefrac{1}{2}}\big).
	\end{split}
	\end{align}
\end{prop}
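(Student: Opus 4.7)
The strategy is to introduce the intermediate quantity $\E[f_0(Y^{1,x}_T)]$ and apply the triangle inequality in $L^p(\P \otimes \nu)$ to split the error into a weak discretization error (between the exact SDE and the Euler scheme) and a Monte Carlo error (from averaging $M$ i.i.d.\ samples). Since the processes $Y^{m,x}$, $m \in \{1,2,\ldots,M\}$, are driven by i.i.d.\ Brownian motions and thus are identically distributed, it holds for every $x \in \R^d$ and every $m$ that $\E[f_0(Y^{m,x}_T)] = \E[f_0(Y^{1,x}_T)]$. Therefore I would write
\begin{align*}
&\left( \E \!\left[ \int_{\R^d} \Big| \E[f_0(X^{x}_T)] - \tfrac{1}{M}  \smallsum_{m=1}^M  f_0(Y^{m, x}_T) \Big|^p \nu(dx) \right] \right)^{\!\nicefrac{1}{p}} \\
&\leq \left[ \int_{\R^d} \big| \E[f_0(X^{x}_T)] - \E[f_0(Y^{1,x}_T)] \big|^p \, \nu(dx) \right]^{\nicefrac{1}{p}} \\
&\quad + \left( \E \!\left[ \int_{\R^d} \Big| \E[f_0(Y^{1, x}_T)] - \tfrac{1}{M}  \smallsum_{m=1}^M  f_0(Y^{m, x}_T) \Big|^p \, \nu(dx) \right] \right)^{\!\nicefrac{1}{p}} .
\end{align*}

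Next I would verify that the hypotheses \eqref{eq:cor:ass:f_0}--\eqref{eq:cor:ass:integral} imply all assumptions of Lemma~\ref{lem:integral:weak:euler} and Lemma~\ref{lem:error:monte_carlo}. For Lemma~\ref{lem:integral:weak:euler} this is direct, using that $\|f_1(0)\| \leq \kappa d^{\ExponDim_1}$ from \eqref{eq:cor:ass:linear}. For Lemma~\ref{lem:error:monte_carlo} the only point requiring comment is the moment bound with exponent $p\theta$ rather than $p(1+\theta)$; this follows from Jensen's (or H\"older's) inequality applied to \eqref{eq:cor:ass:integral}, since $p\theta \leq p(1+\theta)$ yields
\begin{equation*}
\left[ \int_{\R^d} \|z\|^{p\theta} \, \nu(dz) \right]^{\nicefrac{1}{(p\theta)}} \leq \left[ \int_{\R^d} \|z\|^{p(1+\theta)} \, \nu(dz) \right]^{\nicefrac{1}{(p(1+\theta))}} \leq \kappa d^{\ExponDim_1}.
\end{equation*}
Applying Lemma~\ref{lem:integral:weak:euler} to the first term and Lemma~\ref{lem:error:monte_carlo} to the second term then yields two explicit upper bounds, one proportional to $(h/T)^{\nicefrac{1}{2}}$ and the other proportional to $M^{-\nicefrac{1}{2}}$.

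The remaining work is purely bookkeeping: showing that both bounds are dominated by the common prefactor
\begin{equation*}
2^{4\theta+5} |\!\max\{1,T\}|^{\theta+1} \iota^{2\theta+3} e^{6\iota+5\iota^2 T} p (p\theta+p+1)^{\theta} d^{\ExponDim_0 + \ExponDim_1(\theta+1)},
\end{equation*}
where $\iota = \max\{\kappa,\theta,1\}$. The bound from Lemma~\ref{lem:integral:weak:euler} is already in this form up to replacing $\iota^{\theta+3}$ by $\iota^{2\theta+3}$ and inserting the factor $p(p\theta+p+1)^{\theta}$, both of which are $\geq 1$ since $p \geq 2$ and $\iota \geq 1$. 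For the Monte Carlo contribution one uses $d \geq 1$ to replace $d^{\ExponDim_0 + \ExponDim_1\theta}$ by $d^{\ExponDim_0 + \ExponDim_1(\theta+1)}$, and the estimates $\kappa \leq \iota$, $(\kappa T + 1)^{\theta} \leq (2\iota \max\{1,T\})^{\theta}$, $\kappa^{\theta}+1 \leq 2\iota^{\theta}$, and $e^{\kappa \theta T} \leq e^{\iota^2 T} \leq e^{6\iota + 5\iota^2 T}$ to absorb the constants into the common prefactor. Summing the two bounds then produces exactly the factor $(h/T)^{\nicefrac{1}{2}} + M^{-\nicefrac{1}{2}}$ and completes the proof. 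The main (mild) obstacle is purely this constant-tracking; there is no conceptual new step beyond the triangle inequality and the two preceding lemmas.
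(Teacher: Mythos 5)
Your proposal is correct and follows essentially the same route as the paper's proof: a triangle inequality split around $\E[f_0(Y^{1,x}_T)]$, an application of Lemma~\ref{lem:integral:weak:euler} to the weak-discretization term, a H\"older/Jensen step to convert the moment hypothesis \eqref{eq:cor:ass:integral} into the $p\theta$-moment bound needed by Lemma~\ref{lem:error:monte_carlo}, an application of that lemma to the Monte Carlo term, and constant bookkeeping to merge the two bounds into the stated prefactor. No gaps.
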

\begin{proof}[Proof of Proposition~\ref{prop:monte_carlo_euler}]
	Throughout this proof let 
	$ \iota = \max\{ \kappa, \theta, 1 \}$.
	Note that the triangle inequality proves that
	\begin{align}
	\label{eq:cor:triangle}
	\begin{split}
	&\left( \E \! \left[ \int_{\R^d} \Big| \E[f_0(X^{x}_T)] - \tfrac{1}{M} \Big[ \textstyle \sum\nolimits_{m=1}^M \displaystyle f_0(Y^{m, x}_T) \Big]  \Big|^p \, \nu(dx) \right] \right)^{\!\nicefrac{1}{p}}\\
	& \leq \left( \int_{\R^d} \big|\E[f_0(X^x_T)] - \E[f_0(Y^{1, x}_T)]  \big|^p \,  \nu(dx) \right)^{\!\nicefrac{1}{p}}\\
	& + \left( \E \! \left[ \int_{\R^d} \Big| \E[f_0(Y^{1, x}_T)] - \tfrac{1}{M} \Big[ \textstyle \sum\nolimits_{m=1}^M \displaystyle f_0(Y^{m, x}_T) \Big]  \Big|^p \, \nu(dx) \right] \right)^{\!\nicefrac{1}{p}}.
	\end{split}
	\end{align}
	Next note that \eqref{eq:cor:ass:f}--\eqref{eq:cor:ass:integral} and  Lemma~\ref{lem:integral:weak:euler} (with
	$d=d$, $m=n$, $T=T$, $\kappa=\kappa$, $\theta=\theta$, $\ExponDim_0 = \ExponDim_0$, $\ExponDim_1 = \ExponDim_1$, $h=h$, $B=B$, $p=p$, $\nu=\nu$, $\left\| \cdot \right\| = \left\| \cdot \right\|$, $(\Omega, \mathcal{F}, \P) = (\Omega, \mathcal{F}, \P)$, $W = W^1$, $f_0 = f_0$, $f_1 = f_1$, $\chi=\chi$, $X^x = X^x$, $Y^x= Y^{1, x}$ for $x \in \R^d$ in the notation of  Lemma~\ref{lem:integral:weak:euler}) demonstrates that 
	\begin{align*}
	\label{eq:cor:first_term}
	& \left( \int_{\R^d} \big|\E[f_0(X^x_T)] - \E[f_0(Y^{1, x}_T)]  \big|^p \, \nu(dx) \right)^{\!\nicefrac{1}{p}}  \\
	& \leq 2^{4\theta +5} |\! \max\{1, T\} |^{\theta +1} |\!\max\{ \kappa, \theta, 1 \}|^{\theta +3} e^{(6\max\{ \kappa, \theta, 1 \}+5|\!\max\{ \kappa, \theta, 1 \}|^2 T)}  d^{\ExponDim_0 + \ExponDim_1(\theta+1)} (h/T)^{\nicefrac{1}{2}}\\
	\numberthis
	& = 2^{4\theta +5} | \!\max\{1, T\} |^{\theta +1} \iota^{\theta +3} e^{(6\iota+5\iota^2 T)}  d^{\ExponDim_0 + \ExponDim_1(\theta+1)} (h/T)^{\nicefrac{1}{2}}.
	\end{align*}
	Moreover, observe that H\"older's inequality and \eqref{eq:cor:ass:integral} imply that
	\begin{align}
	\begin{split}
	\left[	\int_{\R^d} \|z\|^{(p\theta)} \, \nu (dz) \right]^{\nicefrac{1}{p \theta}}  \leq \left[	\int_{\R^d} \|z\|^{p(1+\theta)} \, \nu (dz) \right]^{\nicefrac{1}{(p(1+\theta))}}  \leq \kappa d^{\ExponDim_1}.
	\end{split}
	\end{align}
	Lemma~\ref{lem:error:monte_carlo} (with $d=d$, $M=M$, $n=n$, $T=T$, $\kappa=\kappa$, $\theta=\theta$, $\ExponDim_0 = \ExponDim_0$, $\ExponDim_1 = \ExponDim_1$,  $B=B$, $p=p$,  $\nu = \nu$, $\left\| \cdot \right\| = \left\| \cdot \right\|$, $(\Omega, \mathcal{F}, \P) = (\Omega, \mathcal{F}, \P)$, $W^m=W^m$, $f_0=f_0$, $f_1=f_1$, $\chi=\chi$, $Y^{m,x} = Y^{m,x}$ for $m \in \{1, 2, \ldots, M\}$, $x \in \R^d$ in the notation of Lemma~\ref{lem:error:monte_carlo}),  \eqref{eq:cor:ass:f}, and \eqref{eq:cor:ass:linear}  hence establish that
	\begin{align}
	\begin{split}
	&\left( \E \! \left[ \int_{\R^d} \Big| \E[f_0(Y^{1, x}_T)] - \tfrac{1}{M} \Big[ \textstyle \sum\nolimits_{m=1}^M \displaystyle f_0(Y^{m, x}_T) \Big]  \Big|^p \, \nu(dx) \right] \right)^{\!\nicefrac{1}{p}}\\ & \leq  2^{\theta+2}  p  \kappa (p \theta+p +1)^{\theta} (\kappa T +1)^{\theta}  e^{\kappa \theta T} (\kappa^{\theta} +1) d^{\ExponDim_0 + \ExponDim_1 \theta} M^{-\nicefrac{1}{2}}\\
	& \leq 2^{\theta+2}  p  \kappa (p \theta+p +1)^{\theta}
	| \!\max\{1, T\} |^{\theta} (\kappa  +1)^{\theta}  e^{\kappa \theta T} (\kappa^{\theta} +1) d^{\ExponDim_0 + \ExponDim_1 \theta} M^{-\nicefrac{1}{2}}\\
	& \leq 2^{\theta+2}  p  \iota (p \theta+p +1)^{\theta}
	| \!\max\{1, T\} |^{\theta} (2 \iota)^{\theta}  e^{\iota \theta T} ({\iota}^{\theta} +1) d^{\ExponDim_0 + \ExponDim_1 \theta} M^{-\nicefrac{1}{2}}\\
	& \leq 2^{2\theta+3}  p  \iota^{2\theta +1} (p \theta+p +1)^{\theta}
	|\! \max\{1, T\} |^{\theta}   e^{\iota \theta T} d^{\ExponDim_0 + \ExponDim_1 \theta} M^{-\nicefrac{1}{2}}.
	\end{split}
	\end{align}
	This, \eqref{eq:cor:triangle}, and \eqref{eq:cor:first_term} assure that
	\begin{align}
	\begin{split}
	&\left( \E \! \left[ \int_{\R^d} \Big| \E[f_0(X^{x}_T)] - \tfrac{1}{M} \Big[ \textstyle \sum\nolimits_{m=1}^M \displaystyle f_0(Y^{m, x}_T) \Big]  \Big|^p \, \nu(dx) \right] \right)^{\!\nicefrac{1}{p}}  \\
	&  \leq 2^{4\theta +5} | \!\max\{1, T\} |^{\theta +1} \iota^{\theta +3} e^{(6\iota+5\iota^2 T)}  d^{\ExponDim_0 + \ExponDim_1(\theta+1)} (h/T)^{\nicefrac{1}{2}} \\
	& \quad +2^{2\theta+3}  p  \iota^{2\theta +1} (p \theta+p +1)^{\theta}
	|\! \max\{1, T\} |^{\theta}   e^{\iota \theta T} d^{\ExponDim_0 + \ExponDim_1 \theta} M^{-\nicefrac{1}{2}}\\
	& \leq 2^{4\theta +5} | \!\max\{1, T\} |^{\theta +1} \iota^{2\theta +3} e^{(6\iota+5\iota^2 T)} d^{\ExponDim_0 + \ExponDim_1(\theta+1)} \\
	& \quad \cdot \Big(  (h/T)^{\nicefrac{1}{2}}+ p (p \theta + p +1)^{\theta}  M^{-\nicefrac{1}{2}}  \Big)\\
	& \leq   2^{4\theta +5} | \!\max\{1, T\} |^{\theta +1} \iota^{2\theta +3} e^{(6\iota+5\iota^2 T)} d^{\ExponDim_0 + \ExponDim_1(\theta+1)} p (p \theta + p +1)^{\theta} \\
	& \quad \cdot \big((h/T)^{\nicefrac{1}{2}} +  M^{-\nicefrac{1}{2}}\big).
	\end{split}
	\end{align}
	The proof of Proposition~\ref{prop:monte_carlo_euler} is thus completed.
\end{proof}

\subsection{DNN approximations for Kolmogorov PDEs}

\begin{theorem}
	\label{thm:dnn:kolmogorov}
	Let 
	$
	A_d = (A_{d, i, j})_{(i, j) \in \{1, \ldots, d\}^2} \in \R^{ d \times d }
	$,
	$ d \in \N $,
	be symmetric positive semidefinite matrices, 
	let $\left\| \cdot \right\| \colon (\cup_{d \in \N} \R^d) \to [0, \infty)$ satisfy for all $d \in \N$, $x = (x_1, x_2, \ldots, x_d) \in \R^d$ that $\|x\| = ( \smallsum_{i=1}^d |x_i|^2)^{\nicefrac{1}{2}}$,
	for every $ d \in \N $ 
let $ \nu_d \colon \mathcal{B}(\R^d) \to [0,1]$ be a probability measure on $\R^d$,
	let
	$ \varphi_{0,d} \colon \R^d \to \R $, $ d \in \N $,
	and
	$ \varphi_{ 1, d } \colon \R^d \to \R^d $,
	$ d \in \N $,
	be functions,
	let
	$ T, \kappa \in (0, \infty)$,
	$\ExponError, \ExponDim_1, \ExponDim_2, \ldots, \ExponDim_6  \in [0, \infty)$, $\theta \in [1, \infty)$, $p \in [2, \infty)$,	
	$
	( \phi^{ m, d }_{ \varepsilon } )_{ 
		(m, d, \varepsilon) \in \{ 0, 1 \} \times \N \times (0,1] 
	} 
	\subseteq \ANNs
	$, 	$\activation \in C(\R, \R)$ satisfy for all $x \in \R$ that
	$\activation(x) = \max\{x, 0\}$,
	assume for all
	$ d \in \N $, 
	$ \varepsilon \in (0,1] $, 
	$ m \in \{0, 1\}$,
	$ 
	x, y \in \R^d
	$
	that
	$
	\functionANN( \phi^{ 0, d }_{ \varepsilon } )
	\in 
	C( \R^d, \R )
	$,
	$
	\functionANN( \phi^{ 1, d }_{ \varepsilon } )
	\in
	C( \R^d, \R^d )
	$, 	$
	\operatorname{Trace}(A_d)
	\leq 
	\kappa d^{ 2 \ExponDim_1  }
	$,
	$[	\int_{\R^d} \|x\|^{2p \theta} \, \nu_d (dx) ]^{\nicefrac{1}{(2p \theta)}} \allowbreak \leq \kappa d^{\ExponDim_1 + \ExponDim_2}$,
	$ 
	\paramANN( \phi^{ m, d }_{ \varepsilon } ) 
	\leq \kappa d^{ 2^{(-m)} \ExponDim_3 } \varepsilon^{ - 2^{(-m)}  \ExponError }$,
 $ |( \functionANN (\phi^{ 0, d }_{ \varepsilon }) )(x) - ( \functionANN (\phi^{ 0, d }_{ \varepsilon }) )(y)| \leq \kappa d^{\ExponDim_6} (1   + \|x\|^{\theta} + \|y \|^{\theta})\|x-y\|$, 
 	$
 \|
 ( \functionANN (\phi^{ 1, d }_{ \varepsilon }) )(x)    
 \|	
 \leq 
 \kappa ( d^{ \ExponDim_1 + \ExponDim_2 } + \| x \| )
 $, $|
	\varphi_{ 0, d }( x )| \leq \kappa d^{ \ExponDim_6 }
	( d^{ \theta(\ExponDim_1 + \ExponDim_2) } + \| x \|^{ \theta } )$,
	$
	\| 
	\varphi_{ 1, d }( x ) 
	- 
	\varphi_{ 1, d }( y )
	\|
	\leq 
	\kappa 
	\| x - y \| 
	$, 	
	and
	\begin{equation}
	\label{eq:intro_hypo}
	\| 
	\varphi_{ m, d }(x) 
	- 
	( \functionANN (\phi^{ m, d }_{ \varepsilon }) )(x)
	\|
	\leq 
	\varepsilon  \kappa d^{\ExponDim_{(5 -m)}} (d^{\theta(\ExponDim_1 + \ExponDim_2)}+ \|x\|^{\theta}) 
	,
	\end{equation}
	and for every $ d \in \N $ let
	$ u_d \colon [0,T] \times \R^{d} \to \R $
	be an 
	at most polynomially growing viscosity solution of
	\begin{equation}
	\label{eq:PDE}
	\begin{split}
	( \tfrac{ \partial }{\partial t} u_d )( t, x ) 
	& = 
	( \tfrac{ \partial }{\partial x} u_d )( t, x )
	\,
	\varphi_{ 1, d }( x )
	+
	\textstyle
	\sum\limits_{ i, j = 1 }^d
	\displaystyle
	A_{ d, i, j }
	\,
	( \tfrac{ \partial^2 }{ \partial x_i \partial x_j } u_d )( t, x )
	\end{split}
	\end{equation}
	with $ u_d( 0, x ) = \varphi_{ 0, d }( x ) $
	for $ ( t, x ) \in (0,T) \times \R^d $ 	(cf.~Definition~\ref{Def:ANN} and Definition~\ref{Definition:ANNrealization}).
	Then 
	there exist
		$
	c \in \R
	$ and
	$
	( 
	\Psi_{ d, \varepsilon } 
	)_{ (d , \varepsilon)  \in \N \times (0,1] } \subseteq \ANNs
	$
	such that
	for all 
	$
	d \in \N 
	$,
	$
	\varepsilon \in (0,1] 
	$
	it holds that
	$
\mathcal{R}( \Psi_{ d, \varepsilon } )
\in C( \R^{ d }, \R )
$, $[
\int_{ \R^d }
|
u_d(T, x) - ( \mathcal{R} (\Psi_{ d, \varepsilon }) )( x )
|^p
\,
\nu_d(dx)
]^{ \nicefrac{ 1 }{ p } }
\leq
\varepsilon$, 
and
\begin{align}
\label{eq:kolmogorov:statement}
\paramANN( \Psi_{ d, \varepsilon } ) \leq c d^{6[\ExponDim_6 + (\ExponDim_1 + \ExponDim_2)(\theta+1)] + \max\{4, \ExponDim_3\}  +  \ExponError \max\{\ExponDim_5 + \theta (\ExponDim_1 + \ExponDim_2), \ExponDim_4 + \ExponDim_6 + 2\theta (\ExponDim_1 + \ExponDim_2)\}}  \varepsilon^{-(\ExponError +6)}.
\end{align}
\end{theorem}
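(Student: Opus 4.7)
The plan is to reduce Theorem~\ref{thm:dnn:kolmogorov} to a single application of Theorem~\ref{thm:main}, identifying the discrete approximation scheme with the Monte Carlo Euler method for the Feynman--Kac representation of $u_d(T, \cdot)$. First, on an auxiliary probability space I would fix a matrix $B_d \in \R^{d \times d}$ with $B_d B_d^\ast = 2 A_d$ (the symmetric square root of $2A_d$), independent standard Brownian motions $W^m$, $m \in \N$, in $\R^d$, and the strong solutions $X^x$ of $X^x_t = x + \int_0^t \varphi_{1, d}(X^x_s)\, ds + B_d W^1_t$. Since $\varphi_{1,d}$ is globally Lipschitz and $\varphi_{0,d}$ polynomially bounded, standard Feynman--Kac theory for at most polynomially growing viscosity solutions gives $u_d(T, x) = \E[\varphi_{0, d}(X^x_T)]$. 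Setting $M_N = N$ I then feed into Theorem~\ref{thm:main} the data $Z^{N, d, m}_{n-1} = B_d(W^m_{nT/N} - W^m_{(n-1)T/N})$, $f_{N, d}(z, x) = x + (T/N) \varphi_{1, d}(x) + z$, and $g_{N, d}(x_1, \ldots, x_N) = \tfrac{1}{N} \sum_{i=1}^N \varphi_{0, d}(x_i)$, so that the process $Y^{N, d, m, x}$ is exactly the Euler--Maruyama discretization driven by $W^m$. Proposition~\ref{prop:monte_carlo_euler} then verifies~\eqref{eq:ass:u} with rate $\ExponN_0 = \tfrac{1}{2}$ and dimension exponent $\ExponDim_0 = \ExponDim_6 + (\ExponDim_1 + \ExponDim_2)(\theta + 1)$, while Gaussian moment estimates combined with $\operatorname{Trace}(A_d) \leq \kappa d^{2 \ExponDim_1}$ and the moment assumption on $\nu_d$ yield~\eqref{eq:ass:Z:apriori}.

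The heart of the argument is the DNN construction. For the one-step map I let $\mathbf{f}^{N, d}_{\varepsilon, z} \in \ANNs$ realize $x \mapsto x + (T/N) (\functionANN(\phi^{1, d}_\varepsilon))(x) + z$, built by scaling $\phi^{1, d}_\varepsilon$ by $T/N$ via Definition~\ref{Definition:ANNscalar}, summing the result with a padded identity coming from $\idRelu_d$ using Lemma~\ref{lem:sum:ANN} to form $x + (T/N)\functionANN(\phi^{1,d}_\varepsilon)(x)$, and implementing the $z$-shift by precomposition with $\vectorANN_z$ via Lemma~\ref{lem:ANN:vector:comp}. I declare $\SubsetANNs_{d, \varepsilon}$ to be the class of ANNs whose dimension profile matches that of this Euler-step network (into which $\idRelu_d$ also fits after padding). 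Estimates~\eqref{eq:ass:dist:phi}--\eqref{eq:ass:phi:linear} are then immediate from~\eqref{eq:intro_hypo}, from the sublinear growth bound $\|(\functionANN(\phi^{1,d}_\varepsilon))(x)\| \leq \kappa(d^{\ExponDim_1+\ExponDim_2}+\|x\|)$, and from the Lipschitz assumption on $\varphi_{1, d}$. The composition clause of Theorem~\ref{thm:main} is verified via Lemma~\ref{lem:Composition_Sum}, which appends one more Euler step to any $\Phi \in \SubsetANNs_{d,\varepsilon}$ at parameter cost $[\tfrac{1}{2} \paramANN(\idRelu_d) + \paramANN((T/N)\circledast\phi^{1,d}_\varepsilon)]^2 = O(d^{2\ExponDim_3}\varepsilon^{-\ExponError})$, yielding $\ExponN_1 = 1$. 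The network $\mathbf{g}^{M_N, d}_\varepsilon$ realizing $(x_1,\ldots,x_{M_N})\mapsto \tfrac{1}{M_N} \sum_m (\functionANN(\phi^{0, d}_\varepsilon))(x_m)$ is built by Lemma~\ref{lem:sum:ANN} applied to $M_N$ copies of $\phi^{0,d}_\varepsilon$ with equal weights $1/M_N$ and the appropriate coordinate-projection matrices; the bound $\paramANN(\cdot) \leq M_N^2 \paramANN(\phi^{0, d}_\varepsilon)$ then produces $\ExponN_2 = 2$, and estimates~\eqref{eq:perturbation:psi}--\eqref{eq:increments:psi} follow from~\eqref{eq:intro_hypo} and the Lipschitz-type bound on $\functionANN(\phi^{0, d}_\varepsilon)$.

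The main obstacle is the bookkeeping of exponents and the matching of architectures: one must check that the ANNs $\mathbf{f}^{N, d}_{\varepsilon, z}$ have identical dimension profiles as $z$ varies (which is immediate here since $z$ only affects an additive bias added by $\vectorANN_z$) and that these profiles are compatible with the parallelization used to build $\mathbf{g}^{M_N, d}_\varepsilon$. With the identifications $\ExponN_0 = \tfrac{1}{2}$, $\ExponN_1 = 1$, $\ExponN_2 = 2$, and $\ExponDim_0 = \ExponDim_6 + (\ExponDim_1 + \ExponDim_2)(\theta + 1)$, the conclusion~\eqref{eq:prop:statement} of Theorem~\ref{thm:main} specializes to the bound $\paramANN(\Psi_{d, \varepsilon}) \leq c\, d^{6[\ExponDim_6 + (\ExponDim_1 + \ExponDim_2)(\theta + 1)] + \ExponDim_3 + \ExponError \delta} \varepsilon^{-(6 + \ExponError)}$ with $\delta = \max\{\ExponDim_5 + \theta(\ExponDim_1+\ExponDim_2), \ExponDim_4 + \ExponDim_6 + 2\theta(\ExponDim_1+\ExponDim_2)\}$; replacing $\ExponDim_3$ by $\max\{4, \ExponDim_3\}$ to accommodate the identity-network cost $\paramANN(\idRelu_d) = 4d^2 + 3d$ and any inflation needed to absorb the $\paramANN(\idRelu_d)$-term appearing in Lemma~\ref{lem:Composition_Sum}, this matches~\eqref{eq:kolmogorov:statement}. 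A direct invocation of Theorem~\ref{thm:main} then produces the networks $\Psi_{d, \varepsilon}$ with the required $L^p(\nu_d)$-accuracy and parameter count.
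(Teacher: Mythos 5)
Your overall strategy is precisely the one the paper uses: Feynman--Kac to reduce $u_d(T,\cdot)$ to $\E[\varphi_{0,d}(X^x_T)]$, the Monte Carlo Euler method with $M_N=N$, Proposition~\ref{prop:monte_carlo_euler} to verify~\eqref{eq:ass:u} with $\ExponN_0=\nicefrac12$ and $\ExponDim_0=\ExponDim_6+(\ExponDim_1+\ExponDim_2)(\theta+1)$, Lemma~\ref{lem:Composition_Sum} for the one-step Euler ANN and the iteration clause, Lemma~\ref{lem:sum:ANN} for the outer averaging ANN with $\ExponN_2=2$, and then a single application of Theorem~\ref{thm:main}. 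Two small slips in the statement (using ``Lemma~\ref{lem:sum:ANN}'' for the first Euler step where Lemma~\ref{lem:Composition_Sum} is the relevant tool, and calling the $z$-shift a ``precomposition'' when $\vectorANN_z$ is composed on the left) do not change the construction.

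There is, however, a genuine arithmetic error in your identification of $\ExponN_1$ which, taken literally, would give a strictly weaker bound than~\eqref{eq:kolmogorov:statement}. You claim $\ExponN_1=1$, but the per-step cost from Lemma~\ref{lem:Composition_Sum} is \emph{independent of $N$}: since $\dims(\tfrac{T}{N}\circledast\phi^{1,d}_\varepsilon)=\dims(\phi^{1,d}_\varepsilon)$ by Lemma~\ref{lem:ANNscalar}, the increment $\bigl[\tfrac12\paramANN(\idRelu_d)+\paramANN(\tfrac{T}{N}\circledast\phi^{1,d}_\varepsilon)\bigr]^2$ has no $N$-dependence whatsoever, so the hypothesis of Theorem~\ref{thm:main} is met with $\ExponN_1=0$. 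With $\ExponN_1=1$ and $\ExponN_2=2$, $\ExponN_0=\nicefrac12$, Theorem~\ref{thm:main} would yield the exponent $\tfrac{\ExponN_1+\ExponN_2+1}{\ExponN_0}+\ExponError = 8+\ExponError$ on $\varepsilon^{-1}$ and $8\ExponDim_0+\ldots$ on $d$, whereas~\eqref{eq:kolmogorov:statement} requires $6+\ExponError$ and $6\ExponDim_0+\ldots$; the bound you write at the end is only consistent with $\ExponN_1=0$, so your intermediate claim and your final formula contradict each other. Relatedly, your estimate of the per-step cost as $O(d^{2\ExponDim_3}\varepsilon^{-\ExponError})$ overlooks that the hypothesis gives $\paramANN(\phi^{1,d}_\varepsilon)\leq\kappa d^{\ExponDim_3/2}\varepsilon^{-\ExponError/2}$ (note the $2^{(-m)}$-factor with $m=1$), so the square is $O(d^{\ExponDim_3}\varepsilon^{-\ExponError})$, and after accounting for $\tfrac12\paramANN(\idRelu_d)=O(d^2)$ it is $O(d^{\max\{4,\ExponDim_3\}}\varepsilon^{-\ExponError})$ --- this is exactly where the $\max\{4,\ExponDim_3\}$ in the theorem originates, not from separately inflating to absorb the identity cost as you suggest. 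Once you replace $\ExponN_1=1$ by $\ExponN_1=0$ and $d^{2\ExponDim_3}$ by $d^{\max\{4,\ExponDim_3\}}$, your argument coincides with the paper's proof and delivers~\eqref{eq:kolmogorov:statement}.
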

\begin{proof}[Proof of Theorem~\ref{thm:dnn:kolmogorov}]
Throughout this proof let $ \mathcal{A}_d \in \R^{ d \times d } $, $ d \in \N $, satisfy 
for all $ d \in \N $ that
$
\mathcal{A}_d = \sqrt{ 2 A_d }
$, 
let $(\Omega, \mathcal{F}, \P)$ be a probability space, 
let $ W^{ d, m } \colon [0,T] \times \Omega \to \R^d $, $ d, m \in \N $, 
be independent standard Brownian motions, 
let  $Z^{N, d, m}_n \colon \Omega \to \R^{d} $, $n \in \{0, 1, \ldots, N-1\}$, $m \in \{1, 2, \ldots, N\}$, $d, N \in \N$, be the random variables which satisfy for all $N, d \in \N$, $m \in  \{1, 2, \ldots, N\}$,  $n \in \{0, 1, \ldots, N-1\}$ that
\begin{align}
\label{eq:increments}
Z^{N, d, m}_n = \mathcal{A}_d W^{d, m}_{\frac{(n+1)T}{N}} - \mathcal{A}_d W^{d, m}_{\frac{nT}{N}},
\end{align}
let $f_{N, d} \colon \R^{d} \times \R^{d} \to \R^{d}$,  $d, N \in \N$, satisfy for all $N, d \in \N$, $x, y \in \R^d$ that
\begin{align}
\label{eq:varphi:Euler}
f_{N, d}(x, y) = x+ y + \tfrac{T}{N} \varphi_{1, d}(y),
\end{align}
let $ X^{ d, x } \colon [0,T] \times \Omega \to \R^d $,  $ x \in \R^d $,  $ d \in \N $,
be stochastic processes with continuous sample paths 
which satisfy for all  $ d \in \N $, $ x \in \R^d $,  $ t \in [0,T] $ that
\begin{equation}
\label{eq:X_processes}
X^{ d, x }_t 
= x + \int_0^t \varphi_{ 1, d }( X^{ d, x }_s ) \, ds 
+ 
\mathcal{A}_d
W^{ d, 1 }_t 
\end{equation} 
(cf., e.g., \cite[item (i) in Theorem~3.1]{JentzenSalimovaWelti2018}
(with $(\Omega, \mathcal{F}, \P) = (\Omega, \mathcal{F}, \P)$, $T=T$, $d=d$, $m=d$, $B= \mathcal{A}_d$, 
$\mu = \varphi_{1,d}$ for $d \in \N$ in the notation of \cite[Theorem~3.1]{JentzenSalimovaWelti2018})), 
 let $Y^{N, d, x}_n = (Y^{N, d, m, x}_n)_{m \in \{1, 2, \ldots, N\}} \colon \Omega \to \R^{N d}$, $n \in \{0, 1, \ldots, N\}$, $x \in \R^d$, $d, N \in \N$,   satisfy  for all $N,  d \in \N$,  $m \in \{1, 2, \ldots, N\}$,   $x \in \R^d$, $n \in \{1, 2, \ldots, N\}$ that 	$Y^{N, d, m, x}_{0}  = x$ and
\begin{align}
\label{eq:approx:Euler}
Y^{N, d, m, x}_{n} &= f_{N, d} \big(Z^{N, d, m}_{n-1}, Y^{N, d, m, x}_{n-1}\big),
\end{align}
let $g_{N, d} \colon  \R^{Nd} \to \R$, $d, N \in \N$, satisfy for all $N, d \in \N$, $x = (x_i)_{i \in \{1, 2, \ldots, N\}} \in \R^{Nd}$ that
\begin{equation}
\label{eq:monte_carlo}
g_{N, d}(x) = \frac{1}{N} \sum_{i=1}^N \varphi_{0,d} (x_i),
\end{equation}
and
let $\SubsetANNs_{d, \varepsilon} \subseteq \ANNs$, 
$ \varepsilon \in (0, 1]$, $d \in \N$, 
 satisfy for all $d \in \N$, $\varepsilon \in (0, 1]$ that
\begin{align*}
\label{eq:subset:DNN}
&\SubsetANNs_{d, \varepsilon} \numberthis\\
&= \Big\{  \Phi \in \ANNs \colon \! \big[ \big( \functionANN( \Phi ) \in C(\R^d, \R^d) \big) \wedge \big(\dimANNlevel_{\lengthANN(\Phi) -1}(\Phi) \leq \dimANNlevel_{\lengthANN(\phi^{1,d}_{\varepsilon}) -1}(\phi^{1,d}_{\varepsilon}) + 2d \big) \big] \Big\}
\end{align*}
(cf.~Definition~\ref{Def:ANN:dimensions}).
Note that \eqref{eq:increments} and, e.g., \cite[Lemma~4.2]{JentzenSalimovaWelti2018} (with $d = d$, $m = d$, $T = T$, $p = 2p \theta$, $B = \mathcal{A}_d$, $(\Omega, \mathcal{F}, \P) = (\Omega, \mathcal{F}, \P)$, $W = W^{d,m}$ for $d, m \in \N$ in the notation of \cite[Lemma~4.2]{JentzenSalimovaWelti2018}) 
ensure that
for all $N, d \in \N$, $m \in  \{1, 2, \ldots, N\}$,  $n \in \{0, 1, \ldots, N-1\}$ it holds that
\begin{align}
\begin{split}
&\left( \E\! \left[ \|Z^{N, d, m}_{n} \|^{2 p \theta} \right]\right)^{\nicefrac{1}{(2 p \theta)}}  =  \left( \E\! \left[\Big\| \mathcal{A}_d W^{d, m}_{\frac{(n+1)T}{N}} - \mathcal{A}_d W^{d, m}_{\frac{nT}{N}}\Big\|^{2 p \theta} \right]\right)^{\!\nicefrac{1}{(2 p \theta)}}\\
&\leq \left( \E\! \left[\Big\| \mathcal{A}_d W^{d, m}_{\frac{(n+1)T}{N}} \Big\|^{2 p \theta} \right]\right)^{\!\nicefrac{1}{(2 p \theta)}} + \left( \E\! \left[\Big\|  \mathcal{A}_d W^{d, m}_{\frac{nT}{N}}\Big\|^{2 p \theta} \right]\right)^{\!\nicefrac{1}{(2 p \theta)}}\\
& \leq 2 \sqrt{(2p\theta-1)  \operatorname{Trace}(\mathcal{A}_d^* \mathcal{A}_d) T} =  2 \sqrt{2(2p\theta-1)  \operatorname{Trace}(A_d) T}.
\end{split}
\end{align}
This and the assumption that $ \forall \, d \in \N \colon
\operatorname{Trace}(A_d)
\leq 
\kappa d^{ 2 \ExponDim_1 }
$
assure for all $N, d \in \N$, $m \in  \{1, 2, \ldots, N\}$,  $n \in \{0, 1, \ldots, N-1\}$  that
\begin{align}
\label{eq:thm:dnn:Z:int}
\begin{split}
\left( \E\! \left[ \|Z^{N, d, m}_{n} \|^{2 p \theta} \right]\right)^{\nicefrac{1}{(2 p \theta)}}  \leq 4 p \theta \sqrt{\kappa T} d^{\ExponDim_1}.
\end{split}
\end{align}
Moreover, observe that Lemma~\ref{lem:Relu:identity} (with
$d=d$, $a=a$
 for $d \in \N$ in the notation of Lemma~\ref{lem:Relu:identity})  ensures that there exist $\idRelu_d \in \ANNs$, $d \in \N$,  such that for all $d \in \N$, $x \in \R^d$ it holds that
 $\dims (\idRelu_d) = (d, 2d, d)$,
$\functionANN( \idRelu_{d}) \in C(\R^d, \R^d)$,  and
$(\functionANN (\idRelu_d))(x) = x$. This and \eqref{eq:subset:DNN} assure for all $d \in \N$, $\varepsilon \in (0, 1]$ that $\idRelu_d \in \SubsetANNs_{d, \varepsilon}$ and
\begin{align}
\label{eq:parameters:Id}
\paramANN(\idRelu_d) = 2d(d+1) + d(2d+1) = 2d^2 + 2d+2d^2 +d = 4d^2 + 3d \leq 7d^2.
\end{align}
Next note that Lemma~\ref{lem:ANNscalar}
demonstrates
that for all $N, d \in \N$, $\varepsilon \in (0, 1]$
it holds that
$\mathcal{D}(\frac{T}{N} \circledast \phi^{1, d}_{\varepsilon}) = \mathcal{D}(\phi^{1, d}_{\varepsilon})$,
$\functionANN(\frac{T}{N} \circledast \phi^{1, d}_{\varepsilon} ) \in C(\R^d, \R^d)$,  and 
\begin{align}
\label{eq:phi:multiply}
\functionANN \big(\tfrac{T}{N} \circledast \phi^{1, d}_{\varepsilon}\big) = \tfrac{T}{N} \functionANN (\phi^{1, d}_{\varepsilon})
\end{align}
(cf.~Definition~\ref{Definition:ANNscalar}).
This, the fact that $\dims (\idRelu_d) = (d, 2d, d)$, and Lemma~\ref{lem:Composition_Sum} (with 
$a=a$,  $L_1 = \lengthANN (\tfrac{T}{N} \circledast \phi^{1, d}_{\varepsilon}) $, $L_2 = 2$, $\mathbb{I} = \idRelu_d$, $\Phi_1 =  \tfrac{T}{N} \circledast \phi^{1, d}_{\varepsilon}$, $\Phi_2 = \idRelu_d$, 
$d=d$, $\mathfrak{i} = 2d$, $(l_{1, 0}, l_{1, 1}, \ldots, l_{1, L_1})  = \dims(\tfrac{T}{N} \circledast \phi^{1, d}_{\varepsilon})$, $(l_{2, 0}, l_{2, 1},  l_{2, L_2}) = (d, 2d, d)$
for $d,   N \in \N$, $\varepsilon \in (0, 1]$ in the notation of Lemma~\ref{lem:Composition_Sum}) establish that  there exist $\mathbf{f}^{N, d}_{\varepsilon} \in \ANNs$, $\varepsilon \in (0, 1]$, $d, N \in \N$, such  that
for all  $N, d \in \N$, $\varepsilon \in (0, 1]$, $x \in \R^d$ it holds that $\functionANN (\mathbf{f}^{N, d}_{\varepsilon}) \in C(\R^d, \R^d)$ and
\begin{align}
( \functionANN (\mathbf{f}^{N, d}_{\varepsilon})) (x) = x +  \big( \functionANN \big(\tfrac{T}{N} \circledast \phi^{1, d}_{\varepsilon}\big)\big)(x) =  x + \tfrac{T}{N} ( \functionANN (\phi^{1, d}_{\varepsilon}))(x).
\end{align}
Items~\eqref{item:ANN:vector:comp:2}--\eqref{item:ANN:vector:comp:3} in Lemma~\ref{lem:ANN:vector:comp} hence ensure that  there exist $\mathbf{f}^{N, d}_{\varepsilon, z}  \in \ANNs$, $z \in \R^d$, $\varepsilon \in (0, 1]$,  $d, N \in \N$, which satisfy for all
  $N, d \in \N$, $\varepsilon \in (0, 1]$, $z, x \in \R^d$ that $\functionANN (\mathbf{f}^{N, d}_{\varepsilon, z}) \in C(\R^d, \R^d)$ and
\begin{align}
\label{eq:DNN:Euler}
( \functionANN (\mathbf{f}^{N, d}_{\varepsilon, z})) (x) = ( \functionANN (\mathbf{f}^{N, d}_{\varepsilon})) (x) + z =  z+x+  \tfrac{T}{N} ( \functionANN( \phi^{1, d}_{\varepsilon}))(x) .
\end{align}
This, \eqref{eq:varphi:Euler},  and \eqref{eq:intro_hypo} imply  for all $N, d \in \N$, $\varepsilon \in (0, 1]$,  $x, z \in \R^d$ that $(\R^d \ni \mathfrak{z} \mapsto  ( \functionANN (\mathbf{f}^{N, d}_{\varepsilon, \mathfrak{z}}))(x) \in \R^d)$ is $\mathcal{B}(\R^d) \slash \mathcal{B}(\R^d)$-measurable and
\begin{align}
\label{eq:thm:dnn:varphi:appr}
\begin{split}
\|f_{N, d}(z, x) - ( \functionANN (\mathbf{f}^{N, d}_{\varepsilon, z}))(x)  \| &= \tfrac{T}{N} \| \varphi_{1, d} (x) - ( \functionANN (\phi^{1, d}_{\varepsilon}))(x) \|\\
&\leq \tfrac{T  \varepsilon \kappa d^{\ExponDim_4}}{N} 
(
d^{\theta(\ExponDim_1 + \ExponDim_2)} + \| x \|^{ \theta }
)\\
& \leq  \varepsilon T \kappa d^{\ExponDim_4}
(
d^{\theta(\ExponDim_1 + \ExponDim_2)} + \| x \|^{ \theta }
).
\end{split}
\end{align}
Next note that \eqref{eq:DNN:Euler}  and the assumption that 	$ \forall \, \varepsilon \in (0, 1], d \in \N,  x \in \R^d \colon
\|
( \functionANN (\phi^{ 1, d }_{ \varepsilon }) )(x)    
\|	
\leq 
\kappa ( d^{ \ExponDim_1 + \ExponDim_2 } + \| x \| )
$ prove for all $N, d \in \N$, $\varepsilon \in (0, 1]$,  $x, z \in \R^d$ that
\begin{align}
\label{eq:thm:dnn:varphi:linear}
\begin{split}
\|( \functionANN (\mathbf{f}^{N, d}_{\varepsilon, z}))(x)  \| &\leq \|z\| + \|x\| + \tfrac{T}{N} \|
( \functionANN (\phi^{ 1, d }_{ \varepsilon }) )(x)
\|\\
& \leq  \|z\| + \|x\| + \tfrac{T \kappa}{N} ( d^{ \ExponDim_1 + \ExponDim_2 } + \| x \| ) \\
& = \big(1 + \tfrac{T \kappa}{N} \big) \|x\| + \tfrac{T \kappa d^{\ExponDim_1 + \ExponDim_2}}{N} + \|z\|\\
& \leq  \big(1 + \tfrac{T \kappa}{N} \big) \|x\| + (T\kappa +1) (d^{\ExponDim_1 + \ExponDim_2} + \|z\|)\\
& \leq  \big(1 + \tfrac{T \kappa}{N} \big) \|x\| + (T\kappa +1) d^{\ExponDim_2}(d^{\ExponDim_1} + \|z\|). 
\end{split}
\end{align}
In addition, observe that \eqref{eq:varphi:Euler} and the assumption that $\forall \, d \in \N, x, y \in \R^d \colon \| 
\varphi_{ 1, d }( x ) 
- 
\varphi_{ 1, d }( y )
\|
\leq 
\kappa 
\| x - y \|$ imply that for all $N, d \in \N$, $x, z \in \R^d$ it holds that
\begin{align}
\label{eq:thm:dnn:varphi:lipschitz}
\begin{split}
\|f_{N, d}(z, x) - f_{N, d}(z, y) \| & = \|x + \tfrac{T}{N} \varphi_{1,d}(x) - y - \tfrac{T}{N} \varphi_{1, d}(y) \|\\
& \leq \|x-y\| + \tfrac{T}{N} \|\varphi_{ 1, d }( x ) 
- 
\varphi_{ 1, d }( y )
\|\\
& \leq  \big(1 + \tfrac{T \kappa}{N} \big) \|x -y\| \leq (1+ T \kappa)\|x -y\|.
\end{split}
\end{align}
Moreover, note that \eqref{eq:phi:multiply}, the fact that $\mathcal{D} (\idRelu_d) = (d, 2d, d)$, and
Lemma~\ref{lem:Composition_Sum} (with 
$a=a$,  $L_1 = \lengthANN (\tfrac{T}{N} \circledast \phi^{1, d}_{\varepsilon}) $, $L_2 = \lengthANN (\Phi)$, $\mathbb{I} = \idRelu_d$, $\Phi_1 =  \tfrac{T}{N} \circledast \phi^{1, d}_{\varepsilon}$, $\Phi_2 = \Phi$, 
$d=d$, $\mathfrak{i} = 2d$, $(l_{1, 0}, l_{1, 1}, \ldots, l_{1, L_1}) = \dims(\tfrac{T}{N} \circledast \phi^{1, d}_{\varepsilon}) = \dims (\phi^{1, d}_{\varepsilon})$, $(l_{2, 0}, l_{2, 1}, \ldots, l_{2, L_2}) = \dims (\Phi)$
for $N, d \in \N$, $\varepsilon \in (0, 1]$, $\Phi \in \SubsetANNs_{d, \varepsilon}$ in the notation of Lemma~\ref{lem:Composition_Sum})
 prove that for every  $N, d \in \N$, $\varepsilon \in (0, 1]$,  $\Phi \in \SubsetANNs_{d, \varepsilon}$ there exists
	$\hat{\Phi} \in \ANNs$ such that
for all $x \in \R^d$ it holds that 
$\functionANN(\hat{\Phi}) \in C(\R^d, \R^d)$,
$\dimANNlevel_{\lengthANN(\hat{\Phi}) -1}(\hat{\Phi}) \leq \dimANNlevel_{\lengthANN(\phi^{1,d}_{\varepsilon}) -1}(\phi^{1,d}_{\varepsilon}) + 2d $,
$\paramANN(\hat{\Phi}) \leq \paramANN(\Phi) + [\frac{1}{2} \paramANN(\idRelu_d) + \paramANN(\phi^{1, d}_{\varepsilon})]^2$,
and 
\begin{align}
\begin{split}
(\functionANN (\hat{\Phi})) (x) &= (\functionANN(\Phi))(x) +
\big( \big(\functionANN\big(\tfrac{T}{N} \circledast \phi^{1, d}_{\varepsilon}\big)\big)\circ (\functionANN(\Phi))\big)(x)\\
&= (\functionANN(\Phi))(x) +
\tfrac{T}{N} \big((\functionANN(  \phi^{1, d}_{\varepsilon}))\circ (\functionANN(\Phi))\big)(x).
\end{split}
\end{align}
This, \eqref{eq:subset:DNN}, \eqref{eq:parameters:Id}, and the fact that
$\forall \, d \in \N, \varepsilon \in (0, 1] \colon 
\paramANN( \phi^{ 1, d }_{ \varepsilon } ) 
\leq \allowbreak \kappa d^{ 2^{(-1)} \ExponDim_3 } \varepsilon^{ - 2^{(-1)}  \ExponError }$ demonstrate that  for every  $N, d \in \N$, $\varepsilon \in (0, 1]$,  $\Phi \in \SubsetANNs_{d, \varepsilon}$ there exists
$\hat{\Phi} \in \SubsetANNs_{d, \varepsilon}$ such that
for all $x \in \R^d$ it holds that 
\begin{equation}
\paramANN(\hat{\Phi}) \leq \paramANN(\Phi) + (4d^2 +\kappa d^{ 2^{(-1)} \ExponDim_3 } \varepsilon^{ - 2^{(-1)}  \ExponError })^2 \leq \paramANN(\Phi) +  (\kappa+4)^2 d^{ \max\{4, \ExponDim_3\}} \varepsilon^{-\ExponError}
\end{equation}
and 
\begin{align}
\begin{split}
(\functionANN (\hat{\Phi})) (x) = (\functionANN(\Phi))(x) + 
\tfrac{T}{N} \big((\functionANN(  \phi^{1, d}_{\varepsilon}))\circ (\functionANN(\Phi))\big)(x).
\end{split}
\end{align}
Items~\eqref{item:ANN:vector:comp:1}--\eqref{item:ANN:vector:comp:3} in Lemma~\ref{lem:ANN:vector:comp} and \eqref{eq:DNN:Euler}   hence  ensure that 
for every  $N, d \in \N$, $\varepsilon \in (0, 1]$,  $\Phi \in \SubsetANNs_{d, \varepsilon}$ there exist	$(\hat{\Phi}_{z})_{z \in \R^d} \subseteq \SubsetANNs_{d, \varepsilon}$ such that
	for all $x, z, \mathfrak{z} \in \R^d$ it holds that
\begin{align}
\begin{split}
\label{eq:thm:dnn:phi:exist:1}
(\functionANN (\hat{\Phi}_z))(x) & = z+  (\functionANN(\Phi))(x)+ \tfrac{T}{N} \big((\functionANN(  \phi^{1, d}_{\varepsilon}))\circ (\functionANN(\Phi))\big)(x) \\
& = ( \functionANN (\mathbf{f}^{N, d}_{\varepsilon, z}))\big( (\functionANN (\Phi))(x)\big),
\end{split}
\end{align}
\vspace{-.3cm}
\begin{equation}
\label{eq:thm:dnn:phi:exist:2}
\paramANN(\hat{\Phi}_z) \leq \paramANN(\Phi) +  (\kappa+4)^2 d^{ \max\{4, \ExponDim_3\}} \varepsilon^{-\ExponError},
\end{equation}
and  $\mathcal{D} (\hat{\Phi}_{z}) = \mathcal{D} (\hat{\Phi}_{\mathfrak{z}})$.
In the next step we observe that Lemma~\ref{lem:sum:ANN} (with 
$n = N$,
$h_m = \nicefrac{1}{N}$,
$\phi_m =  \phi^{0, d}_{\varepsilon}$,
$\activation = \activation$
for $N, d \in \N$, $\varepsilon \in (0, 1]$, $m \in \{1, 2, \ldots, N\}$
in the notation of Lemma~\ref{lem:sum:ANN}) demonstrates that there exist 	$\mathbf{g}^{N, d}_{\varepsilon} \in \ANNs$, $\varepsilon \in (0, 1]$, $d, N \in \N$, which satisfy for all $N, d \in \N$, $\varepsilon \in (0, 1]$, $x = (x_i)_{i \in \{1, 2, \ldots, N\}} \in \R^{Nd}$ that $\functionANN(\mathbf{g}^{N, d}_{\varepsilon}) \in C(\R^{N d}, \R)$ and 
\begin{align}
\label{eq:monte_carlo:DNN}
( \functionANN (\mathbf{g}^{N, d}_{\varepsilon}))  (x) = \frac{1}{N} \sum_{i = 1}^N ( \functionANN (\phi^{0, d}_{\varepsilon}))(x_i).
\end{align}
This, \eqref{eq:monte_carlo}, and \eqref{eq:intro_hypo} 
ensure that for all $N, d \in \N$, $\varepsilon \in (0, 1]$,  $x = (x_i)_{i \in \{1, 2, \ldots, N\}} \in \R^{Nd}$
 it holds that
\begin{align}
\label{eq:thm:dnn:psi:appr}
\begin{split}
&|g_{N,d}(x) - ( \functionANN (\mathbf{g}^{N, d}_{\varepsilon}) )(x) | \leq \frac{1}{N} \sum_{i=1}^N |\varphi_{0,d}(x_i) - ( \functionANN( \phi^{0, d}_{\varepsilon}))(x_i)|\\
& \leq \frac{\varepsilon \kappa d^{\ExponDim_5}}{N} \sum_{i=1}^N (d^{\theta(\ExponDim_1 + \ExponDim_2)} + \|x_i\|^{\theta}) = \varepsilon \kappa d^{\ExponDim_5} \left[ d^{\theta(\ExponDim_1 + \ExponDim_2)} + \tfrac{1}{N} \textstyle \sum\nolimits_{i=1}^N \displaystyle \|x_i \|^{\theta} \right].
\end{split}
\end{align}
Moreover, note that \eqref{eq:monte_carlo:DNN} and the assumption that $\forall \, \varepsilon \in (0,1],   d \in \N, 
x, y \in \R^d \colon   |( \functionANN (\phi^{ 0, d }_{ \varepsilon }) )(x) - ( \functionANN (\phi^{ 0, d }_{ \varepsilon }) )(y)| \leq \kappa d^{\ExponDim_6} (1  + \|x\|^{\theta} + \|y \|^{\theta})\|x-y\|$ imply that for all $N, d \in \N$, $\varepsilon \in (0, 1]$,  $x = (x_i)_{i \in \{1, 2, \ldots, N\}} \in \R^{Nd}$, 
$y = (y_i)_{i \in \{1, 2, \ldots, N\}} \in \R^{Nd}$ it holds that
\begin{align}
\label{eq:thm:dnn:psi:Lipschitz}
\begin{split}
&|( \functionANN (\mathbf{g}^{N, d}_{\varepsilon}) )(x) - ( \functionANN (\mathbf{g}^{N, d}_{\varepsilon}) )(y)|\\
& \leq \frac{1}{N} \sum_{i=1}^N | ( \functionANN( \phi^{0, d}_{\varepsilon}))(x_i) - ( \functionANN( \phi^{0, d}_{\varepsilon}))(y_i)|\\
 &\leq \frac{\kappa  d^{\ExponDim_6}}{N} \left[ \textstyle \sum\nolimits_{i=1}^N \displaystyle (1 + \|x_i\|^{\theta} + \|y_i \|^{\theta})\|x_i- y_i\| \right].
\end{split}
\end{align}
Next observe that the fact that  $\dims (\idRelu_d) = (d, 2d, d)$ and, e.g., \cite[Proposition~2.16]{GrohsHornungJentzen2019} (with 
$\Psi = \idRelu_d$,
$\Phi_1 = \phi^{0, d}_{\varepsilon}$,
$\Phi_2 \in \{\Phi \in \ANNs \colon \inDimANN(\Phi) = \outDimANN(\Phi) =d  \}$,
$\mathfrak{i} = 2d$ 
 in the notation of \cite[Proposition~2.16]{GrohsHornungJentzen2019})   prove that for every  $N, d \in \N$, $\varepsilon \in (0, 1]$,  $\Phi_1, \Phi_2, \ldots, \Phi_{N} \in \{\Phi \in \ANNs \colon \inDimANN(\Phi) = \outDimANN(\Phi) =d  \}$ with $\mathcal{D}(\Phi_1) = \mathcal{D}(\Phi_2) = \ldots = \mathcal{D}(\Phi_{N})$ there exist
$\Psi_1, \Psi_2, \ldots, \Psi_{N} \in \ANNs$ such that for all  $i \in \{1, 2, \ldots, N\}$ it holds that
$  \functionANN (\Psi_i) \in C(\R^d, \R)$,  $\mathcal{D}(\Psi_i) = \mathcal{D}(\Psi_1)$,
$\paramANN(\Psi_i) \leq 2 (\paramANN(\phi^{0, d}_{\varepsilon}) + \paramANN(\Phi_i))$, and
\begin{align}
\functionANN (\Psi_i) = [ \functionANN (\phi^{0, d}_{\varepsilon})] \circ  [\functionANN (\idRelu_d)] \circ  [\functionANN (\Phi_i)] =  [ \functionANN (\phi^{0, d}_{\varepsilon})]  \circ  [\functionANN (\Phi_i)].
\end{align}
This, \eqref{eq:monte_carlo:DNN}, and Lemma~\ref{lem:sum:ANNs}  assure that for every $N, d \in \N$, $\varepsilon \in (0, 1]$,  $\Phi_1, \Phi_2, \ldots, \Phi_{N} \in \{\Phi \in \ANNs \colon \inDimANN(\Phi) = \outDimANN(\Phi) =d \}$ with $\mathcal{D}(\Phi_1) = \mathcal{D}(\Phi_2) = \ldots = \mathcal{D}(\Phi_{N})$ there exists 
$\Psi \in \ANNs$ such that for all $x \in \R^d$ it holds that
$  \functionANN (\Psi) \in C(\R^d, \R)$, 
$\paramANN(\Psi) \leq 2 N^2 (\paramANN(\phi^{0, d}_{\varepsilon}) + \paramANN(\Phi_1))$, and
\begin{align}
\begin{split}
(\functionANN (\Psi)) (x) &= \frac{1}{N} \sum_{i=1}^{N} ( \functionANN (\phi^{0, d}_{\varepsilon}))\big( (\functionANN (\Phi_i)) (x)\big)\\
& = ( \functionANN (\mathbf{g}^{N, d}_{\varepsilon}))  \big( (\functionANN (\Phi_1)) (x), (\functionANN (\Phi_2)) (x), \ldots, (\functionANN (\Phi_N)) (x)\big).
\end{split}
\end{align}
The assumption that
$\forall \, d \in \N, \varepsilon \in (0, 1] \colon \paramANN( \phi^{ 0, d }_{ \varepsilon } ) 
\leq \kappa d^{ \ExponDim_3 } \varepsilon^{ - \ExponError }$ hence ensures that for every $N, d \in \N$, $\varepsilon \in (0, 1]$,  $\Phi_1, \Phi_2, \ldots, \Phi_{N} \in \{\Phi \in \ANNs \colon \inDimANN(\Phi) = \outDimANN(\Phi) =d \}$ with $\mathcal{D}(\Phi_1) = \mathcal{D}(\Phi_2) = \ldots = \mathcal{D}(\Phi_{N})$ there exists 
$\Psi \in \ANNs$ such that for all $x \in \R^d$ it holds that
$  \functionANN (\Psi) \in C(\R^d, \R)$,   $(\functionANN (\Psi)) (x) =  ( \functionANN (\mathbf{g}^{N, d}_{\varepsilon}))  ( (\functionANN (\Phi_1)) (x), (\functionANN (\Phi_2)) (x),$ $ \ldots, (\functionANN (\Phi_N)) (x))$, and 
\begin{align}
\label{eq:thm:dnn:psi:exist}
\begin{split}
\paramANN(\Psi) \leq 2  N^2  (  \kappa d^{ \ExponDim_3 } \varepsilon^{ - \ExponError }  + \paramANN(\Phi_1)) \leq 2 \max\{\kappa, 1\} N^2  (   d^{ \ExponDim_3 } \varepsilon^{ - \ExponError }  + \paramANN(\Phi_1)).
\end{split}
\end{align}
Furthermore, note that   \eqref{eq:intro_hypo}   and the assumption that $\forall \, d\in \N, \varepsilon \in (0, 1], x, y \in \R^d  \colon 	
|( \functionANN (\phi^{ 0, d }_{ \varepsilon }) )(x) - ( \functionANN (\phi^{ 0, d }_{ \varepsilon }) )(y)| \leq \kappa d^{ \ExponDim_6} (1 + \|x\|^{\theta} + \|y \|^{\theta})\|x-y\|$ demonstrate for all $d \in \N$, $\varepsilon \in (0, 1]$, $x \in \R^d$ that
\begin{align*}
&|\varphi_{0,d}(x) - \varphi_{0, d}(y)| \numberthis\\
&\leq 	| 
\varphi_{ 0, d }(x) 
- 
( \functionANN (\phi^{ 0, d }_{ \varepsilon }) )(x)
|
+ 	| 
\varphi_{ 0, d }(y) 
- 
( \functionANN (\phi^{ 0, d }_{ \varepsilon }) )(y)
| \\
& \quad + |( \functionANN (\phi^{ 0, d }_{ \varepsilon }) )(x) - ( \functionANN (\phi^{ 0, d }_{ \varepsilon } ))(y)|\\
& \leq 	\varepsilon \kappa d^{ \ExponDim_5 }
(
d^{\theta(\ExponDim_1 + \ExponDim_2)} + \| x \|^{ \theta }
) + 	\varepsilon \kappa d^{ \ExponDim_5 }
(
d^{\theta(\ExponDim_1 + \ExponDim_2)} + \| y \|^{ \theta }
) \\
& \quad +  \kappa d^{ \ExponDim_6} (1 + \|x\|^{\theta} + \|y \|^{\theta})\|x-y\|.
\end{align*}
This establishes that for all  $d \in \N$, $x \in \R^d$ it holds that
\begin{align}
\label{eq:thm:dnn:f_0}
\begin{split}
|\varphi_{0,d}(x) - \varphi_{0, d}(y)| &\leq \kappa  d^{ \ExponDim_6} (1 + \|x\|^{\theta} + \|y \|^{\theta})\|x-y\|.
\end{split}
\end{align}
Next observe that the assumption that $\forall \, d\in \N, \varepsilon \in (0, 1], x \in \R^d \colon 	 \allowbreak	\|
( \functionANN (\phi^{ 1, d }_{ \varepsilon }) )(x)    
\|	
\leq 
\kappa ( d^{ \ExponDim_1 + \ExponDim_2 } + \| x \| ) $ and \eqref{eq:intro_hypo} ensure for all $d \in \N$, $\varepsilon \in (0, 1]$, $x \in \R^d$ that
\begin{align}
\begin{split}
\|\varphi_{1, d} (x) \| &\leq 	\| 
\varphi_{ 1, d }(x) 
- 
( \functionANN (\phi^{ 1, d }_{ \varepsilon }) )(x)
\| +  	\|
( \functionANN (\phi^{ 1, d }_{ \varepsilon }) )(x)    
\|\\
&	\leq 		\varepsilon  \kappa d^{\ExponDim_4} (d^{\theta(\ExponDim_1 + \ExponDim_2)}+ \|x\|^{\theta})  + \kappa ( d^{ \ExponDim_1 + \ExponDim_2 } + \| x \| ).
\end{split}
\end{align}
This proves that for all  $d \in \N$, $x \in \R^d$ it holds that
\begin{align}
\label{eq:thm:dnn:f_1}
\|\varphi_{1, d} (x) \| \leq \kappa ( d^{ \ExponDim_1 + \ExponDim_2 } + \| x \| ).
\end{align}
In the next step we note that the H\"older's inequality, the assumption that 	$\forall \, d \in \N \colon [	\int_{\R^d} \|x\|^{2p \theta} \, \nu_d (dx) ]^{\nicefrac{1}{(2p\theta)}}  \leq \kappa d^{\ExponDim_1 + \ExponDim_2}$, and the assumption that $\theta \in [1, \infty)$ assure that for all $d \in \N$ it holds that
\begin{align}
\label{eq:thm:dnn:int}
\begin{split}
\left[	\int_{\R^d} \|x\|^{p (1+\theta)} \, \nu_d (dx) \right]^{\nicefrac{1}{(p(1+\theta))}} 
&\leq \left[	\int_{\R^d} \|x\|^{2p\theta} \, \nu_d (dx) \right]^{\nicefrac{1}{(2p\theta)}}  \leq \kappa d^{\ExponDim_1 + \ExponDim_2}.
\end{split}
\end{align}
Next note that \eqref{eq:approx:Euler}, \eqref{eq:varphi:Euler}, and \eqref{eq:increments} imply that for all $N,  d \in \N$,  $m \in \{1, 2, \ldots, N\}$,   $x \in \R^d$, $n \in \{1, 2, \ldots, N\}$ it holds that 
\begin{align}
\begin{split}
Y^{N, d, m, x}_{n} & = Z^{N, d, m}_{n-1} + Y^{N, d, m, x}_{n-1} + \tfrac{T}{N} \varphi_{1,d}(Y^{N, d, m, x}_{n-1}) \\
&=  Y^{N, d, m, x}_{n-1} + \tfrac{T}{N} \varphi_{1,d}(Y^{N, d, m, x}_{n-1}) +  \mathcal{A}_d W^{d, m}_{\frac{nT}{N}} - \mathcal{A}_d W^{d, m}_{\frac{(n-1)T}{N}}.
\end{split}
\end{align}
The assumption that
	$\forall \, d \in \N, x  \in \R^d \colon  |
\varphi_{ 0, d }( x )| \leq \kappa d^{ \ExponDim_6 }
( d^{ \theta(\ExponDim_1 + \ExponDim_2) } + \| x \|^{ \theta } )$,
  the assumption that $ \forall \, d \in \N \colon
\operatorname{Trace}(A_d)
\leq 
\kappa d^{ 2 \ExponDim_1 }
$,
the assumption that
$\forall \, d \in \N, x, y \in \R^d \colon 	\| 
\varphi_{ 1, d }( x ) 
- 
\varphi_{ 1, d }( y )
\|
\leq 
\kappa 
\| x - y \| $,
 \eqref{eq:thm:dnn:f_0}, \eqref{eq:thm:dnn:f_1}, \eqref{eq:thm:dnn:int}, \eqref{eq:X_processes}, and
Proposition~\ref{prop:monte_carlo_euler} (with 
$d=d$,
$M=N$, 
$n=d$,
$T=T$,
$\kappa = \kappa$,
$\theta = \theta$,
$\ExponDim_0 = \ExponDim_6$,
$\ExponDim_1 = \ExponDim_1 + \ExponDim_2$,
$h = \nicefrac{T}{N}$,
$B = \mathcal{A}_d$,
$p=p$,
$\nu = \nu_d$,
$(\Omega, \mathcal{F}, \P) =  (\Omega, \mathcal{F}, \P)$,
$W^m = W^{d, m}$,
$f_0 = \varphi_{0,d}$,
$f_1 = \varphi_{1, d}$
for $N, d \in \N$
 in the notation of Proposition~\ref{prop:monte_carlo_euler}) hence
 establish that for all $N, d \in \N$ it holds that
\begin{align}
 \begin{split}
&\left(\E \! \left[ \int_{\R^d} \Big|\E[\varphi_{0,d} (X_T^{d, x})] - \tfrac{1}{N} \Big[ \textstyle \sum\nolimits_{i=1}^N \displaystyle \varphi_{0,d} (Y^{N, d, i, x}_{N}) \Big] \Big|^p \, \nu_d (dx) \right] \right)^{\!\nicefrac{1}{p}}\\
& \leq  2^{4\theta +5} | \!\max\{1, T\} |^{\theta +1} |\!\max\{ \kappa, \theta, 1 \}|^{2\theta +3} e^{(6\max\{ \kappa, \theta, 1 \}+5|\!\max\{ \kappa, \theta, 1 \}|^2 T)}  \\
& \quad \cdot  p (p \theta + p +1)^{\theta} d^{\ExponDim_6 + (\ExponDim_1 + \ExponDim_2)(\theta+1)} \big( N^{-\nicefrac{1}{2}} +  N^{-\nicefrac{1}{2}}\big)\\
& =  2^{4\theta +6} |\! \max\{1, T\} |^{\theta +1} |\!\max\{ \kappa, \theta\}|^{2\theta +3} e^{(6\max\{ \kappa, \theta \}+5|\!\max\{ \kappa, \theta \}|^2 T)}  \\
& \quad \cdot  p (p \theta + p +1)^{\theta} d^{\ExponDim_6 + (\ExponDim_1 + \ExponDim_2)(\theta+1)}  N^{-\nicefrac{1}{2}} .
 \end{split}
 \end{align}
This, 
the fact that $\forall \, d\in \N, x \in \R^d \colon 	|
\varphi_{ 0, d }( x )
| 
\leq 
\kappa d^{ \ExponDim_6 }
( d^{\theta (\ExponDim_1 + \ExponDim_2) } + \| x \|^{ \theta } )$,
\eqref{eq:thm:dnn:f_1}, 
\eqref{eq:monte_carlo}, 
and, e.g.,
\cite[Theorem~3.1]{JentzenSalimovaWelti2018} (with $(\Omega, \mathcal{F}, \P) = (\Omega, \mathcal{F}, \P)$, $T=T$, $d=d$, $m=d$, $B= \mathcal{A}_d$, 
$\varphi = \varphi_{0,d}$,
$\mu = \varphi_{1,d}$ for $d \in \N$ in the notation of \cite[Theorem~3.1]{JentzenSalimovaWelti2018})  prove for all $N, d \in \N$ that
\begin{align}
\label{eq:thm:dnn:error}
\begin{split}
&\left(\E \! \left[ \int_{\R^d} \big|u_d(T, x) - g_{N,d} (Y^{N, d, x}_N) \big|^p \, \nu_d (dx) \right] \right)^{\!\nicefrac{1}{p}}\\
&= \left(\E \! \left[ \int_{\R^d} \big|\E[\varphi_{0,d} (X_T^{d, x})] - g_{N,d} (Y^{N, d, x}_N) \big|^p \, \nu_d (dx) \right] \right)^{\!\nicefrac{1}{p}}\\
& = \left(\E \! \left[ \int_{\R^d} \Big|\E[\varphi_{0,d} (X_T^{d, x})] - \tfrac{1}{N} \Big[ \textstyle \sum\nolimits_{i=1}^N \displaystyle \varphi_{0,d} (Y^{N, d, i, x}_{N}) \Big] \Big|^p \, \nu_d (dx) \right] \right)^{\!\nicefrac{1}{p}}\\
& \leq   2^{4\theta +6} | \!\max\{1, T\} |^{\theta +1} |\!\max\{ \kappa, \theta\}|^{2\theta +3} e^{(6\max\{ \kappa, \theta \}+5|\!\max\{ \kappa, \theta \}|^2 T)}  \\
& \quad \cdot  p (p \theta + p +1)^{\theta} d^{\ExponDim_6 + (\ExponDim_1 + \ExponDim_2)(\theta+1)}  N^{-\nicefrac{1}{2}}.
\end{split}
\end{align}
Combining this, \eqref{eq:approx:Euler}, 
\eqref{eq:thm:dnn:Z:int},
\eqref{eq:parameters:Id},
\eqref{eq:thm:dnn:varphi:appr},
\eqref{eq:thm:dnn:varphi:linear},
\eqref{eq:thm:dnn:varphi:lipschitz},
\eqref{eq:thm:dnn:phi:exist:1},
\eqref{eq:thm:dnn:phi:exist:2},
\eqref{eq:thm:dnn:psi:appr},
\eqref{eq:thm:dnn:psi:Lipschitz},
\eqref{eq:thm:dnn:psi:exist}, and
Theorem~\ref{thm:main} (with
$(\Omega, \mathcal{F}, \P) = (\Omega, \mathcal{F}, \P)$,
$\ExponN_0= \nicefrac{1}{2}$,
$\ExponN_1 = 0$,
$\ExponN_2 =2$,
$\ExponError = \ExponError$,
$\ExponDim_0 = \ExponDim_6 + (\ExponDim_1 + \ExponDim_2)(\theta+1)$,
$\ExponDim_1 = \ExponDim_1$,
$\ExponDim_2 = \ExponDim_2$,
$\ExponDim_3 =  \max\{4, \ExponDim_3\}$,
$\ExponDim_4 = \ExponDim_4$,
$\ExponDim_5 = \ExponDim_5$,
$\ExponDim_6 = \ExponDim_6$,
$ \Constant =2^{4\theta +6} | \!\max\{1, T\} |^{\theta +1} \allowbreak |\!\max\{ \kappa, \theta\}|^{2\theta +3} e^{(6\max\{ \kappa, \theta \}+5|\!\max\{ \kappa, \theta \}|^2 T)}    p (p \theta + p +1)^{\theta}$,
$p=p$,
$\theta = \theta$,
$M_N= N$,
$Z^{N, d, m}_n = Z^{N, d, m}_n$,
$f_{N, d} = f_{N, d}$,
$Y^{N, d, x}_l = Y^{N, d, x}_l$,
$\left\| \cdot \right\| = \left\| \cdot \right\|$,
$\nu_d = \nu_d$,
$g_{ N, d } = g_{ N, d }$,
$u_d(x) = u_d(T,x)$,
$\ANNs = \ANNs$,
$\paramANN = \paramANN$,
$\dims = \dims$,
$\mathcal{R} = \functionANN$,
$\SubsetANNs_{d, \varepsilon} = \SubsetANNs_{d, \varepsilon}$,
$\mathbf{f}^{N, d}_{\varepsilon, z} = \mathbf{f}^{N, d}_{\varepsilon, z}$,
$\mathbf{g}^{N, d}_{\varepsilon} = \mathbf{g}^{N, d}_{\varepsilon}$,
$\idRelu_d = \idRelu_d$
for $N, d \in \N$, $m \in \{1, 2, \ldots, N\}$, 
$n \in \{0, 1,  \ldots, N-1\}$,  $l \in \{0, 1, \ldots, N\}$,
$\varepsilon \in (0, 1]$,
$x, z \in \R^d$
in the notation of Theorem~\ref{thm:main})
establish \eqref{eq:kolmogorov:statement}. The proof of Theorem~\ref{thm:dnn:kolmogorov} is thus completed.
\end{proof}

\begin{cor}
	\label{cor:laplacian:lebesgue}
	Let
	$ \varphi_{0,d} \colon \R^d \to \R $, $ d \in \N $,
	and
	$ \varphi_{ 1, d } \colon \R^d \to \R^d $,
	$ d \in \N $,
	be functions,
		let  $\left\| \cdot \right\| \colon (\cup_{d \in \N} \R^d) \to [0, \infty)$ satisfy for all $d \in \N$, $x = (x_1, x_2, \ldots, x_d) \in \R^d$ that $\|x\| = ( \smallsum_{i=1}^d |x_i|^2)^{\nicefrac{1}{2}}$,
	let
	$ T, \kappa \in (0, \infty)$,
	$\ExponError, \ExponDim_1, \ExponDim_2, \ldots, \ExponDim_6  \in [0, \infty)$, $\theta \in [1, \infty)$, $p \in [2, \infty)$, 
	$
	( \phi^{ m, d }_{ \varepsilon } )_{ 
		(m, d, \varepsilon) \in \{ 0, 1 \} \times \N \times (0,1] 
	} 
	\subseteq \ANNs
	$, 	$\activation \in C(\R, \R)$ satisfy for all $x \in \R$ that
	$\activation(x) = \max\{x, 0\}$,
	assume for all
	$ d \in \N $, 
	$ \varepsilon \in (0,1] $, 
	$ m \in \{0, 1\}$,
	$ 
	x, y \in \R^d
	$
	that
	$
	\functionANN( \phi^{ 0, d }_{ \varepsilon } )
	\in 
	C( \R^d, \R )
	$,
	$
	\functionANN( \phi^{ 1, d }_{ \varepsilon } )
	\in
	C( \R^d, \R^d )
	$, 
		$ 
	\paramANN( \phi^{ m, d }_{ \varepsilon } ) 
	\leq \kappa d^{ 2^{(-m)} \ExponDim_3 } \varepsilon^{ - 2^{(-m)}  \ExponError }$,
	$ |( \functionANN (\phi^{ 0, d }_{ \varepsilon }) )(x) - ( \functionANN (\phi^{ 0, d }_{ \varepsilon }) )(y)| \leq \kappa d^{\ExponDim_6} (1   + \|x\|^{\theta} + \|y \|^{\theta})\|x-y\|$, 
	$
	\|
	( \functionANN (\phi^{ 1, d }_{ \varepsilon }) )(x)    
	\|	
	\leq 
	\kappa ( d^{ \ExponDim_1 + \ExponDim_2 } + \| x \| )
	$, $|
	\varphi_{ 0, d }( x )| \leq \kappa d^{ \ExponDim_6 }
	( d^{ \theta(\ExponDim_1 + \ExponDim_2) } + \| x \|^{ \theta } )$,
	$
	\| 
	\varphi_{ 1, d }( x ) 
	- 
	\varphi_{ 1, d }( y )
	\|
	\leq 
	\kappa 
	\| x - y \| 
	$, 	
	and
	\begin{equation}
	\| 
	\varphi_{ m, d }(x) 
	- 
	( \functionANN (\phi^{ m, d }_{ \varepsilon }) )(x)
	\|
	\leq 
	\varepsilon  \kappa d^{\ExponDim_{(5 -m)}} (d^{\theta(\ExponDim_1 + \ExponDim_2)}+ \|x\|^{\theta}) 
	,
	\end{equation}
	and for every $ d \in \N $ let
	$ u_d \colon [0,T] \times \R^{d} \to \R $
	be an 
	at most polynomially growing viscosity solution of
	\begin{equation}
	\begin{split}
	( \tfrac{ \partial }{\partial t} u_d )( t, x ) 
	& = 
	( \tfrac{ \partial }{\partial x} u_d )( t, x )
	\,
	\varphi_{ 1, d }( x )
	+
	\textstyle
	\sum\limits_{ i = 1 }^d
	\displaystyle
	( \tfrac{ \partial^2 }{ \partial x_i^2  } u_d )( t, x )
	\end{split}
	\end{equation}
	with $ u_d( 0, x ) = \varphi_{ 0, d }( x ) $
	for $ ( t, x ) \in (0,T) \times \R^d $ 	(cf.~Definition~\ref{Def:ANN} and Definition~\ref{Definition:ANNrealization}).
	Then 
	there exist
	$
	c \in \R
	$ and
	$
	( 
	\Psi_{ d, \varepsilon } 
	)_{ (d , \varepsilon)  \in \N \times (0,1] } \subseteq \ANNs
	$
	such that
	for all 
	$
	d \in \N 
	$,
	$
	\varepsilon \in (0,1] 
	$
	it holds that
	$
	\mathcal{R}( \Psi_{ d, \varepsilon } )
	\in C( \R^{ d }, \R )
	$, $[
	\int_{ [0, 1]^d }
	|
	u_d(T, x) - ( \mathcal{R} (\Psi_{ d, \varepsilon }) )( x )
	|^p
	\,
dx
	]^{ \nicefrac{ 1 }{ p } }
	\leq
	\varepsilon$, 
	and
	\begin{align*}
	\label{eq:cor:statement}
	&\paramANN( \Psi_{ d, \varepsilon } ) \leq c \varepsilon^{-(\ExponError +6)} \numberthis\\
	& \cdot d^{6[\ExponDim_6 + ( \max\{\ExponDim_1, \nicefrac{1}{2}\} + \ExponDim_2)(\theta+1)] + \max\{4, \ExponDim_3\}  +  \ExponError \max\{\ExponDim_5 + \theta ( \max\{\ExponDim_1, \nicefrac{1}{2}\} + \ExponDim_2), \ExponDim_4 + \ExponDim_6 + 2\theta ( \max\{\ExponDim_1, \nicefrac{1}{2}\} + \ExponDim_2)\}}  .
	\end{align*}
\end{cor}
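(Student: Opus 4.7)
The plan is to deduce Corollary \ref{cor:laplacian:lebesgue} as a direct specialization of Theorem \ref{thm:dnn:kolmogorov}. Two specific choices need to be made. First, I would set $A_d = I_d$ (the $d \times d$ identity matrix) for every $d \in \N$. This is symmetric positive semidefinite, and since $\sum_{i,j=1}^d (I_d)_{i,j} (\partial^2/(\partial x_i \partial x_j)) u_d = \sum_{i=1}^d (\partial^2/\partial x_i^2) u_d$, the PDE \eqref{eq:PDE} in Theorem \ref{thm:dnn:kolmogorov} reduces to the Laplacian PDE in Corollary \ref{cor:laplacian:lebesgue}. Second, I would let $\nu_d \colon \mathcal{B}(\R^d) \to [0,1]$ be the Borel probability measure given by $\nu_d(B) = \mathrm{Leb}_d(B \cap [0,1]^d)$, so that integration against $\nu_d$ coincides with Lebesgue integration over $[0,1]^d$.

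With these choices, every hypothesis of Theorem \ref{thm:dnn:kolmogorov} involving $\varphi_{0,d}$, $\varphi_{1,d}$, and the ANNs $\phi^{m,d}_\varepsilon$ holds verbatim, as they are identical to the hypotheses of the corollary. The only adjustments concern the two hypotheses involving $A_d$ and $\nu_d$, namely $\operatorname{Trace}(A_d) \leq \kappa d^{2\ExponDim_1}$ and $[\int_{\R^d} \|x\|^{2p\theta}\, \nu_d(dx)]^{1/(2p\theta)} \leq \kappa d^{\ExponDim_1 + \ExponDim_2}$. Since $\operatorname{Trace}(I_d) = d$ and, using $\|x\| \leq \sqrt{d}$ for every $x \in [0,1]^d$, also $[\int_{\R^d} \|x\|^{2p\theta}\, \nu_d(dx)]^{1/(2p\theta)} \leq d^{1/2}$, both estimates hold with the modified exponent $\widetilde{\ExponDim}_1 := \max\{\ExponDim_1, \nicefrac{1}{2}\}$ and the modified constant $\widetilde\kappa := \max\{\kappa, 1\}$ in place of $\ExponDim_1$ and $\kappa$. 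Crucially, enlarging $\ExponDim_1$ to $\widetilde{\ExponDim}_1$ and $\kappa$ to $\widetilde\kappa$ preserves the validity of all the other hypotheses of Theorem \ref{thm:dnn:kolmogorov}.

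Applying Theorem \ref{thm:dnn:kolmogorov} with this substitution then yields a constant $c \in \R$ and ANNs $(\Psi_{d,\varepsilon})_{(d,\varepsilon) \in \N \times (0,1]}$ satisfying $\mathcal{R}(\Psi_{d,\varepsilon}) \in C(\R^d, \R)$, $[\int_{\R^d} |u_d(T,x) - (\mathcal{R}(\Psi_{d,\varepsilon}))(x)|^p\, \nu_d(dx)]^{1/p} \leq \varepsilon$, and the bound \eqref{eq:kolmogorov:statement} with $\ExponDim_1$ replaced by $\widetilde{\ExponDim}_1$. Rewriting $\int_{\R^d} \cdot\, \nu_d(dx) = \int_{[0,1]^d} \cdot\, dx$ and reading off the exponent of $d$ with $\widetilde{\ExponDim}_1 = \max\{\ExponDim_1, \nicefrac{1}{2}\}$ substituted in then produces exactly \eqref{eq:cor:statement}.

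Since every step is routine bookkeeping, there is no genuine obstacle; the only subtle point is that neither $\operatorname{Trace}(I_d) = d$ nor the $L^{2p\theta}$-norm of the Euclidean length on $[0,1]^d$ can be dominated by a constant multiple of $d^{2\ExponDim_1}$ when $\ExponDim_1 < \nicefrac{1}{2}$, which is precisely why the exponent in \eqref{eq:cor:statement} involves $\max\{\ExponDim_1, \nicefrac{1}{2}\}$ rather than $\ExponDim_1$ itself.
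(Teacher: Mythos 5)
Your proposal is correct and follows exactly the route of the paper's own proof: specialize Theorem~\ref{thm:dnn:kolmogorov} with $A_d = \idMatrix_d$, take $\nu_d$ to be the uniform distribution on $[0,1]^d$, and enlarge $\ExponDim_1$ to $\max\{\ExponDim_1, \nicefrac{1}{2}\}$ and $\kappa$ to $\max\{\kappa, 1\}$ so that the trace and moment hypotheses of Theorem~\ref{thm:dnn:kolmogorov} are met. Your observation that these enlargements only weaken (and hence preserve) every other hypothesis of the theorem, because the relevant bounds are monotone in $\kappa$ and, for $d \geq 1$, in $\ExponDim_1$, is exactly the key monotonicity check the paper relies on implicitly.
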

\begin{proof}[Proof of Corollary~\ref{cor:laplacian:lebesgue}]
Throughout this proof for every $ d \in \N $ 
let $ \lambda_d \colon \mathcal{B}(\R^d) \to [0, \infty]$ be the Lebesgue-Borel measure on $\R^d$ and let  $\nu_d \colon  \mathcal{B}(\R^d) \to [0,1]$ be the function which satisfies for all $B \in \mathcal{B}(\R^d)$ that
\begin{equation}
\label{eq:measure:def}
\nu_d(B) = \lambda_{d}(B \cap [0, 1]^d).
\end{equation}
Observe that \eqref{eq:measure:def} implies that for all $d \in \N$ it holds that $\nu_d$ is a probability measure on $\R^d$. This  and \eqref{eq:measure:def} ensure that for all $d \in \N$, $g \in C(\R^d, \R)$ it holds that
\begin{equation}
\label{eq:integral:equiv}
\int_{\R^d} |g(x)| \, \nu_d(dx) = \int_{ [0,1]^d } |g(x)| \, dx.
\end{equation}		
Combining this with,  e.g., \cite[Lemma~3.15]{GrohsWurstemberger2018} demonstrates that for all $d \in \N$ it holds that
\begin{equation}
\begin{split}
\int_{\R^d} \|x\|^{2p \theta} \, \nu_d (dx)
& = 
\int_{[0,1]^d} 
\|x\|^{2p \theta}
\, dx \leq d^{p \theta }.
\end{split}
\end{equation}
This assures for all $d \in \N$ that
\begin{equation}
\label{eq:cor:measure}
\left[\int_{\R^d} \|x\|^{2p \theta} \, \nu_d (dx)\right]^{\nicefrac{1}{(2p \theta)}} \leq d^{\nicefrac{1}{2}} \leq \max\{\kappa, 1\} d^{ \max\{\ExponDim_1, \nicefrac{1}{2}\} + \ExponDim_2}.
\end{equation} 
Moreover, note that for all $d \in \N$ it holds that
\begin{equation}
\operatorname{Trace}(\idMatrix_d)
\leq d \leq \max\{\kappa, 1\} d^{2 \max\{\ExponDim_1, \nicefrac{1}{2}\}}
\end{equation}
(cf.~Definition~\eqref{Definition:identityMatrix}).
This, \eqref{eq:cor:measure}, and Theorem~\ref{thm:dnn:kolmogorov} (with 
$A_d = \idMatrix_d$,
$\left\| \cdot \right\| = \left\| \cdot \right\|$,
$\nu_d = \nu_d$,
$\varphi_{ 0, d } = \varphi_{ 0, d }$,
$\varphi_{ 1, d } = \varphi_{ 1, d }$,
$T =T$,
$\kappa = \max\{\kappa, 1\}$,
$\ExponError = \ExponError$,
$\ExponDim_1 = \max\{\ExponDim_1, \nicefrac{1}{2}\}$,
$\ExponDim_2 = \ExponDim_2$,
$\ExponDim_3 = \ExponDim_3$,
$\ExponDim_4 = \ExponDim_4$,
$\ExponDim_5 = \ExponDim_5$,
$\ExponDim_6 = \ExponDim_6$,
$\theta = \theta$,
$p = p$,
$\phi^{0, d}_{\varepsilon} = \phi^{0, d}_{\varepsilon}$,
$\phi^{1, d}_{\varepsilon} = \phi^{1, d}_{\varepsilon}$,
$a = a$, $u_d = u_d$ for $d \in \N$
in the notation of Theorem~\ref{thm:dnn:kolmogorov}) establish \eqref{eq:cor:statement}. The proof of Corollary~\ref{cor:laplacian:lebesgue} is thus completed. 
\end{proof}

\begin{cor}
	\label{cor:dnn:kolmogorov}
	Let 
	$
	A_d = ( A_{ d, i, j } )_{ (i, j) \in \{ 1, \dots, d \}^2 } \in \R^{ d \times d }
	$,
	$ d \in \N $,
	be symmetric positive semidefinite matrices, 
let $\left\| \cdot \right\| \colon (\cup_{d \in \N} \R^d) \to [0, \infty)$ satisfy for all $d \in \N$, $x = (x_1, x_2, \ldots, x_d) \in \R^d$ that $\|x\| = ( \smallsum_{i=1}^d |x_i|^2)^{\nicefrac{1}{2}}$,
	for every $ d \in \N $ 
 let $ \nu_d \colon \mathcal{B}(\R^d) \to [0,1]$ be a probability measure on $\R^d$,
	let
	$ \varphi_{0,d} \colon \R^d \to \R $, $ d \in \N $,
	and
	$ \varphi_{ 1, d } \colon \R^d \to \R^d $,
	$ d \in \N $,
	be functions,
	let
	$ T, \kappa,  p \in (0, \infty)$, $\theta \in [1, \infty)$, 
	$
	( \phi^{ m, d }_{ \varepsilon } )_{ 
		(m, d, \varepsilon) \in \{ 0, 1 \} \times \N \times (0,1] 
	} 
	\subseteq \ANNs
	$, 	$\activation \in C(\R, \R)$ satisfy for all $x \in \R$ that
	$\activation(x) = \max\{x, 0\}$,
	assume for all
	$ d \in \N $, 
	$ \varepsilon \in (0,1] $, 
	$m \in \{0, 1\}$,
	$ 
	x, y \in \R^d
	$
	that
	$
	\functionANN( \phi^{ 0, d }_{ \varepsilon } )
	\in 
	C( \R^d, \R )
	$,
	$
	\functionANN( \phi^{ 1, d }_{ \varepsilon } )
	\in
	C( \R^d, \R^d )
	$,
	$
	|
	\varphi_{ 0, d }( x )
	| 
	+
	\operatorname{Trace}(A_d)
	\leq 
	\kappa d^{ \kappa }
	( 1 + \| x \|^{ \theta })
	$,
	$[	\int_{\R^d} \|x\|^{2 \max\{p, 2\} \theta} \, \nu_d (dx) ]^{\nicefrac{1}{(2 \max\{p, 2\} \theta)}}  \leq \kappa d^{\kappa}$,
	$ 
	\paramANN( \phi^{ m, d }_{ \varepsilon } ) 
	\leq \kappa d^{ \kappa } \varepsilon^{ - \kappa }
	$,
		 $ |( \functionANN (\phi^{ 0, d }_{ \varepsilon }) )(x) \allowbreak - ( \functionANN (\phi^{ 0, d }_{ \varepsilon }) )(y)| \leq \kappa d^{\kappa} (1 + \|x\|^{\theta} + \|y \|^{\theta})\|x-y\|$, 
	$
	\|
	( \functionANN (\phi^{ 1, d }_{ \varepsilon }) )(x)    
	\|	
	\leq 
	\kappa ( d^{ \kappa } + \| x \| )
	$,
	$
\| 
\varphi_{ 1, d }( x ) 
- 
\varphi_{ 1, d }( y )
\|
\leq 
\kappa 
\| x - y \|
$,
	and
	\begin{equation}
	\| 
	\varphi_{ m, d }(x) 
	- 
	( \functionANN (\phi^{ m, d }_{ \varepsilon }) )(x)
	\|
	\leq 
	\varepsilon \kappa d^{ \kappa }
	(
	1 + \| x \|^{ \theta }
	)
	,
	\end{equation}
	and for every $ d \in \N $ let
	$ u_d \colon [0,T] \times \R^{d} \to \R $
	be an 
	at most polynomially growing viscosity solution of
	\begin{equation}
	\begin{split}
	( \tfrac{ \partial }{\partial t} u_d )( t, x ) 
	& = 
	( \tfrac{ \partial }{\partial x} u_d )( t, x )
	\,
	\varphi_{ 1, d }( x )
	+
	\textstyle
	\sum\limits_{ i, j = 1 }^d
	\displaystyle
	A_{ d, i, j }
	\,
	( \tfrac{ \partial^2 }{ \partial x_i \partial x_j } u_d )( t, x )
	\end{split}
	\end{equation}
	with $ u_d( 0, x ) = \varphi_{ 0, d }( x ) $
	for $ ( t, x ) \in (0,T) \times \R^d $ (cf.~Definition~\ref{Def:ANN} and Definition~\ref{Definition:ANNrealization}). 
	Then 
	there exist
		$
	c \in \R
	$ and
	$
	( 
	\Psi_{ d, \varepsilon } 
	)_{ (d , \varepsilon)  \in \N \times (0,1] } \subseteq \ANNs
	$
	such that
	for all 
	$
	d \in \N 
	$,
	$
	\varepsilon \in (0,1] 
	$
	it holds that
	$
	\paramANN( \Psi_{ d, \varepsilon } ) 
	\leq
	c \, d^c \varepsilon^{ - c } 
	$,
	$
	\functionANN( \Psi_{ d, \varepsilon } )
	\in C( \R^{ d }, \R )
	$,
	and
	\begin{equation}
	\left[
	\int_{ \R^d }
	|
	u_d(T,x) - ( \functionANN (\Psi_{ d, \varepsilon }) )( x )
	|^p
	\,
	\nu_d(dx)
	\right]^{ \nicefrac{ 1 }{ p } }
	\leq
	\varepsilon 
	.
	\end{equation}
\end{cor}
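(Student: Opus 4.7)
The plan is to reduce Corollary~\ref{cor:dnn:kolmogorov} to a direct application of Theorem~\ref{thm:dnn:kolmogorov}. The Corollary is essentially a ``collapsed'' form of the Theorem in which the separate dimension and approximation exponents $\ExponError,\ExponDim_1,\ldots,\ExponDim_6$ are unified into a single parameter $\kappa$, and in which the integrability parameter $p$ is allowed to take any positive value rather than only values in $[2,\infty)$.

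First I would handle the extension from $p\in[2,\infty)$ to $p\in(0,\infty)$. Set $\tilde p=\max\{p,2\}$. Since every $\nu_d$ is a probability measure, Jensen's inequality gives $\|f\|_{L^p(\nu_d)}\le \|f\|_{L^{\tilde p}(\nu_d)}$ for every measurable $f$, so it suffices to construct DNNs $\Psi_{d,\varepsilon}$ that approximate $u_d(T,\cdot)$ in $L^{\tilde p}(\nu_d)$ to accuracy $\varepsilon$. This is precisely why the Corollary's moment hypothesis is posed with exponent $2\max\{p,2\}\theta$ rather than $2p\theta$: it matches exactly the integrability requirement of Theorem~\ref{thm:dnn:kolmogorov} when the latter is invoked with parameter $\tilde p$.

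Next I would apply Theorem~\ref{thm:dnn:kolmogorov} with
\begin{equation}
\ExponError=\ExponDim_1=\ExponDim_2=\cdots=\ExponDim_6=2\kappa \qandq p=\tilde p,
\end{equation}
and with $A_d,\nu_d,\varphi_{0,d},\varphi_{1,d},\phi^{m,d}_\varepsilon,\theta,T,\activation$ as in the Corollary. The doubling of $\kappa$ is needed because Theorem~\ref{thm:dnn:kolmogorov} requires $\paramANN(\phi^{1,d}_\varepsilon)\le \kappa d^{\ExponDim_3/2}\varepsilon^{-\ExponError/2}$, whereas the Corollary supplies only $\kappa d^{\kappa}\varepsilon^{-\kappa}$. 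With this choice, each remaining Theorem hypothesis follows from its Corollary analogue via the elementary bounds $1+\|x\|^\theta\le d^{\theta(\ExponDim_1+\ExponDim_2)}+\|x\|^\theta$, $d^\kappa\le d^{\ExponDim_1+\ExponDim_2}$, and $\operatorname{Trace}(A_d)\le\kappa d^{\kappa}\le\kappa d^{2\ExponDim_1}$, each valid since $\ExponDim_1+\ExponDim_2=4\kappa\ge\kappa\ge 0$.

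Theorem~\ref{thm:dnn:kolmogorov} then produces DNNs $(\Psi_{d,\varepsilon})_{(d,\varepsilon)\in\N\times(0,1]}$ with the required $L^{\tilde p}(\nu_d)$-error estimate and with $\paramANN(\Psi_{d,\varepsilon})\le \tilde c\,d^{E(\kappa,\theta)}\varepsilon^{-(2\kappa+6)}$, where $E(\kappa,\theta)$ denotes the exponent of $d$ from \eqref{eq:kolmogorov:statement} after the above substitutions and is manifestly a polynomial expression in the fixed constants $\kappa$ and $\theta$. Setting $c:=\max\{\tilde c,E(\kappa,\theta),2\kappa+6\}$ therefore gives $\paramANN(\Psi_{d,\varepsilon})\le c\,d^{c}\varepsilon^{-c}$, and combining with the Jensen step finishes the proof. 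The whole argument is essentially bookkeeping; the only mildly delicate point is ensuring that the Corollary's moment exponent is strong enough to feed Theorem~\ref{thm:dnn:kolmogorov} after the reduction to $\tilde p$, which is built into the formulation by design.
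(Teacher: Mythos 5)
Your proof is correct and follows essentially the paper's own (unstated) approach: the paper declares that Corollary~\ref{cor:dnn:kolmogorov} ``follows immediately from Theorem~\ref{thm:dnn:kolmogorov}'' without writing out the verification, and you have supplied exactly that elaboration. Both reduction steps are sound: the Jensen step from $p$ to $\tilde p=\max\{p,2\}$ is legitimate because each $\nu_d$ is a probability measure (and the corollary's moment hypothesis is phrased in $\tilde p$ precisely so that it feeds the theorem), and the substitution $\ExponError=\ExponDim_1=\cdots=\ExponDim_6=2\kappa$ together with the monotonicity in $d$ and $\varepsilon$ makes every hypothesis of Theorem~\ref{thm:dnn:kolmogorov} a consequence of the corresponding corollary hypothesis (in particular the $m=1$ parameter bound $\paramANN(\phi^{1,d}_\varepsilon)\le\kappa d^{\ExponDim_3/2}\varepsilon^{-\ExponError/2}=\kappa d^\kappa\varepsilon^{-\kappa}$ matches exactly, justifying the doubling).
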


\subsubsection*{Acknowledgments}
This project has been partially  supported 
through the research grant~$ 200020\_175699 $ 
funded by the Swiss National Science Foundation.

\bibliographystyle{acm}
\bibliography{../../bibfile}

\end{document}